\documentclass[12pt,a4paper]{amsart}

\usepackage{amsmath,amssymb,amsfonts,amsthm}
\usepackage[top=35mm,bottom=35mm,left=30mm,right=30mm]{geometry}
\usepackage{enumitem}
\usepackage{mathptmx}
\usepackage{mathrsfs}
\usepackage[colorlinks=true,linkcolor=blue,citecolor=blue,urlcolor=blue]{hyperref}

\newtheorem{theorem}{Theorem}[section]
\newtheorem{proposition}[theorem]{Proposition}
\newtheorem{lemma}[theorem]{Lemma}
\newtheorem{corollary}[theorem]{Corollary}

\theoremstyle{definition}
\newtheorem{definition}[theorem]{Definition}
\newtheorem{example}[theorem]{Example}
\newtheorem*{step1}{Step 1}
\newtheorem*{step2}{Step 2}

\newtheorem*{conjecture}{Conjecture}

\theoremstyle{remark}
\newtheorem{remark}[theorem]{Remark}

\numberwithin{equation}{section}
\begin{document}
	\hbadness=2000

	\title{Variational Principles  for $r$-Neutralized Pfister-Sullivan Entropy}

	\author[E. Chen, Y. Xie and X. Zhou]{Ercai Chen, Yunxiang Xie and Xiaoyao Zhou}

	\address{School of Mathematical Sciences, Ministry of Education Key Laboratory of
		NSLSCS, Nanjing Normal University, Nanjing 210023, Jiangsu, P.R. China}
	\email{ecchen@njnu.edu.cn}
	\email{yxxie20@126.com}
	\email{zhouxiaoyaodeyouxian@126.com}

	\subjclass[2020]{37A35, 37B40, 37C40}

	\keywords{$r$-neutralized entropy; PS entropy;
		variational principle; empirical measure; dimension}

	\begin{abstract}
		In this paper, we study $r$-neutralized Pfister--Sullivan entropy, defined by measuring the complexity of orbit segments whose empirical measures lie near a prescribed invariant measure.  We mainly obtain the following three results:
		\begin{enumerate}
			\item[(i)] We disprove a variational conjecture   posed by C.~Dong and Q.~Qiao
			[\emph{On $r$-neutralized entropy: entropy formula and existence of measures
				attaining the supremum},
			Comm. Math. Phys. \textbf{406} (2025), Article No.~74.]

			\item[(ii)] We obtain variational formulas
			over probability measures for both upper and lower
			$r$-neutralized topological entropies.
			We also introduce new measure-theoretic quantities
			through PS localization and
			 establish some variational principles for upper
			$r$-neutralized topological entropy

			\item[(iii)] We obtain upper bounds for $r$-neutralized PS entropy
			under different smoothness assumptions and lower bounds
			for $C^{1+\alpha}$ diffeomorphisms, in terms of entropy,
			dimension and Lyapunov exponents.
			For $C^\infty$ diffeomorphisms and $C^{1}$ Anosov
			diffeomorphisms, the $r$-neutralized PS entropies equal
			$h_\mu(f)+r\dim M$ for every ergodic measure $\mu$.
			We also establish exact formulas on mixing surface
			hyperbolic basic sets and show that the whole-manifold
			measure formula can fail for $C^2$ surface diffeomorphisms.
		\end{enumerate}
	\end{abstract}

	\maketitle

	\section{Introduction and main results}\label{sec 1}

	The complexity of a dynamical system can be studied through the growth of its orbit segments and the geometry of the sets they form. Entropy measures how rapidly the number of distinguishable orbit segments increases with time. Dimension, on the other hand, describes the structure of invariant sets and measures as the observation scale becomes finer. Studying these quantities together can reveal how temporal complexity is reflected in small-scale geometry.

	For a continuous map $f:X\to X$ on a compact metric space, the classical variational principle \cite{Wal82} relates topological entropy to measure-theoretic entropy:
	\begin{align}\label{equ 1.1}
		h_{top}(f)
		=\sup_{\mu\in\mathcal M_f(X)}h_\mu(f),
	\end{align}
	where $\mathcal M_f(X)$ is the set of all $f$-invariant measures. Katok \cite{Kat80}  provided a  topological equivalent definition for measure-theoretic entropy.
	Brin and Katok \cite{BK83} gave a local characterization of measure-theoretic entropy via the exponential decay rate of the measures of Bowen balls.
In their study of saturated sets, Pfister and Sullivan \cite{PS07} investigated the orbit complexity of points whose empirical measures have prescribed accumulation points. Later,  Lindenstrauss and Tsukamoto \cite{LT18} connected  rate-distortion functions with measure-theoretic entropy from an information-theoretic perspective.

	The relation between entropy and dimension is particularly visible
	in smooth dynamics. Ruelle's inequality \cite{Rue78} bounds
	measure-theoretic entropy by the sum of positive Lyapunov exponents,
	while Pesin's entropy formula \cite{Pes77,LY85a} gives equality
	under suitable assumptions. On two-dimensional surfaces, Young \cite{You82}
	showed that, for a hyperbolic ergodic measure with Lyapunov
	exponents $\lambda_1>0>\lambda_2$,
	\begin{align*}
		\dim_H\mu
		=h_\mu(f)\left(\frac1{\lambda_1}-\frac1{\lambda_2}\right).
	\end{align*}
	In higher dimensions, Ledrappier and Young \cite{LY85a,LY85b}
	related entropy to dimensions along invariant directions. In 1985, Eckmann and Ruelle \cite{ER85} conjectured that every hyperbolic invariant measure of a $C^{1+\alpha}$ diffeomorphism is exact dimensional. That is, its pointwise dimension exists and is constant almost everywhere.
	Barreira, Pesin and Schmeling \cite{BPS99} subsequently proved
	the exact dimensionality of hyperbolic measures, resolving the
	Eckmann--Ruelle conjecture \cite{ER85}.

Estimating the dimension of a measure requires understanding how the measure of small neighborhoods scales along typical orbits. For the fixed radius, Bowen balls may be difficult to describe in the presence
	of nonuniform behavior. Ben Ovadia and Rodriguez-Hertz \cite{OH24}
	therefore considered Bowen balls with exponentially shrinking
	radius $e^{-nr}$ and introduced neutralized local entropy. They proved that
	neutralized local entropy agrees almost everywhere with
	Brin--Katok local entropy. They also used it to establish a
	lower bound for the pointwise dimension of ergodic invariant
	measures without assuming hyperbolicity, complementing the
	upper bound of Ledrappier and Young \cite{LY85b}.
	In the topological setting, Yang, Chen and Zhou \cite{YCZ24},
	motivated by \cite{FH12}, established variational principles for
	neutralized Bowen entropy on compact subsets. For
	$C^{1+\alpha}$ diffeomorphisms, Ben Ovadia \cite{Ova26}
	obtained an unstable entropy formula. Dong and Qiao \cite{DQ25} studied $r$-neutralized local
	entropy for invariant measures and obtained a formula between entropy and dimension
	for hyperbolic measures. They also proposed the following
	variational conjecture.

	\begin{conjecture}[\cite{DQ25}, Conjecture 1]
		Let $(X,d)$ be a compact metric space and let $f:X\to X$
		be a homeomorphism. Then, for every $r>0$,
		\begin{align*}
			\overline h_{top}^r(f,X)
			&=
			\sup_{\mu\in\mathcal M_f(X)}
			\int_X\limsup_{n\to\infty}
			-\frac{1}{n}\log\mu(B_n(x,e^{-nr}))\,\mathrm d\mu(x),\\
			\underline h_{top}^r(f,X)
			&=
			\sup_{\mu\in\mathcal M_f(X)}
			\int_X\liminf_{n\to\infty}
			-\frac{1}{n}\log\mu(B_n(x,e^{-nr}))\,\mathrm d\mu(x).
		\end{align*}
	\end{conjecture}
	We denote by $\overline h_\mu^{BK,r}(f)$ and
	$\underline h_\mu^{BK,r}(f)$  the two integrals on the
	right-hand side, respectively. Dong and Qiao \cite{DQ25}
	proved the conjecture for topological Markov shifts and linear
	Anosov diffeomorphisms on tori. Chen, K\"upper and Xie
	\cite{CKX25} showed that every expansive homeomorphism admits
	a compatible metric for which the conjecture holds when $r$
	is sufficiently small.

	Throughout the paper, $(X,f,d)$ denotes a topological dynamical
	system with $f$ a homeomorphism of the compact metric space
	$(X,d)$. Denote by $\mathcal M(X)$, $\mathcal M_f(X)$ and
	$\mathcal E_f(X)$ for the probability, invariant probability
	and ergodic invariant probability measures on $X$, respectively.
	Also, let $\mathcal D(X)$ be the set of metrics compatible with
	the topology of $X$.

	We first consider whether the conjecture might hold after choosing
	a suitable compatible metric and restricting to sufficiently
	small $r$. Our examples show that even this weaker requirement
	fails. Nevertheless, our first result shows that optimizing over probability measures at
	each finite time yields a variational principle.

	\begin{theorem}\label{thm 1.1}
		Let $(X,d)$ be a compact metric space and $f:X\to X$  be a homeomorphism. Alhough the	conjecture \cite{DQ25} fails in general, for each $r>0$, we can obtain the following variational principle:
		\begin{align*}
			\overline h_{top}^r(f,X)
			&=\limsup_{n\to\infty}
			\max_{\nu\in\mathcal M(X)}-\frac{1}{n} \int
			\log\nu(B_n(x,e^{-nr}))\,d\nu(x),\\
			\underline h_{top}^r(f,X)
			&=\liminf_{n\to\infty}
			\max_{\nu\in\mathcal M(X)}-\frac{1}{n} \int
			\log\nu(B_n(x,e^{-nr}))\,d\nu(x),
		\end{align*}
	\end{theorem}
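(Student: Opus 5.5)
The plan is to compare, at each fixed time $n$ and scale $\varepsilon=e^{-nr}$, the finite-time quantity
\[
\Lambda_n(\varepsilon):=\max_{\nu\in\mathcal M(X)}-\int\log\nu(B_n(x,\varepsilon))\,d\nu(x)
\]
with the logarithm of the covering/separating numbers $N(n,\varepsilon)$ and $S(n,\varepsilon)$ that define $\overline h^r_{top}(f,X)$ and $\underline h^r_{top}(f,X)$. The goal is two-sided bounds
\[
\log S(n,\varepsilon)\le\Lambda_n(\varepsilon)\le\log N(n,\varepsilon/2).
\]
Dividing by $n$ and taking $\limsup$ (resp. $\liminf$) would then give both identities at once. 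I would begin with a short remark justifying the $\max$: $x\mapsto\nu(B_n(x,\varepsilon))$ is lower semicontinuous because the balls are open, and one checks that the integral is upper semicontinuous in $\nu$ (or replaces $\max$ by $\sup$, which does not affect the argument).

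\emph{Lower bound.} Let $E$ be an $(n,\varepsilon)$-separated set of maximal cardinality, so $d_n(x,y)\ge\varepsilon$ for distinct $x,y\in E$. Put $\nu=\frac1{|E|}\sum_{y\in E}\delta_y$. For $x\in E$ the open ball $B_n(x,\varepsilon)$ contains no other point of $E$, so $\nu(B_n(x,\varepsilon))=1/|E|$. Hence the integral equals $\log|E|=\log S(n,\varepsilon)$, which is at least $\log N(n,\varepsilon)$.

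\emph{Upper bound.} Fix any $\nu$. Take a minimal cover of $X$ by $N(n,\varepsilon/2)$ Bowen balls of radius $\varepsilon/2$, and refine it to a Borel partition $\mathcal P$ with at most that many atoms, each of $d_n$-diameter $<\varepsilon$. If $x\in A\in\mathcal P$, then $A\subset B_n(x,\varepsilon)$, so $\nu(B_n(x,\varepsilon))\ge\nu(\mathcal P(x))$. Therefore
\[
-\int\log\nu(B_n(x,\varepsilon))\,d\nu\le H_\nu(\mathcal P)\le\log N(n,\varepsilon/2).
\]

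\emph{Main obstacle.} The two bounds are at radii $\varepsilon$ and $\varepsilon/2$ with $\varepsilon=e^{-nr}$, and $e^{-nr}/2$ is not of the form $e^{-nr'}$ for a fixed $r'$. So I must show the scale change costs only a subexponential factor, i.e.
\[
\tfrac1n\bigl(\log N(n,e^{-nr}/2)-\log N(n,e^{-nr})\bigr)\to0 .
\]
This is not automatic in a general compact metric space. To avoid the factor $2$ I would try two things, in this order.

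First, I would sharpen the upper bound to radius $\varepsilon$ itself. The idea is a greedy, Vitali-type selection organized by level sets: $\{x:\nu(B_n(x,\varepsilon))<e^{-t}\}$ should be covered by at most $S(n,\varepsilon)$ pieces, each of $\nu$-mass controlled by $e^{-t}$, which would bound the layer-cake integral $\int_0^\infty\nu\{-\log\nu(B_n(x,\varepsilon))>t\}\,dt$ by $\log S(n,\varepsilon)$ plus a constant. I expect this not to work directly, because a point $x$ covered by $B_n(y,\varepsilon)$ need not satisfy $B_n(x,\varepsilon)\supset B_n(y,\varepsilon)$.

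If that fails, the fallback depends on how the paper defines $\overline h^r_{top}(f,X)$. If the definition is stated with the separated-set convention, with a tolerance in the radius, or is otherwise insensitive to fixed multiplicative constants in the radius, the mismatch is absorbed directly. Otherwise I would look for a quantitative doubling-type comparison between the scales $e^{-nr}$ and $e^{-nr}/2$ for Bowen metrics. This radius bookkeeping is the step I regard as the crux; the rest of the argument is routine.
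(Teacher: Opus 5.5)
\textbf{There is a genuine gap: the step you flag as the crux is left open.} Your lower bound (uniform measure on a maximal $(n,e^{-nr})$-separated set) is correct. So is your upper bound (refine a cover by Bowen balls of radius $\tfrac12e^{-nr}$ into a partition and use $H_\nu(\mathcal P)\le\log\#\mathcal P$). These are exactly the two halves of the paper's proof, which uses radius $\tfrac13e^{-nr}$ instead. But as written, the upper bound only gives $\limsup_n\frac1n\Lambda_n(e^{-nr})\le\limsup_n\frac1n\log N(n,\tfrac12e^{-nr})$, and none of your proposed routes closes the radius mismatch:
\begin{itemize}
\item The Vitali-type selection at the same radius fails for the reason you give yourself.
\item A doubling-type comparison is not available in a general compact metric space.
\item The paper's definition uses exactly the radius $e^{-nr}$, so insensitivity to constant factors must be proved, not assumed.
\item Your stated target, $\frac1n\bigl(\log N(n,e^{-nr}/2)-\log N(n,e^{-nr})\bigr)\to0$, is stronger than needed and can fail in general. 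Already for $f=\mathrm{id}$, $N(n,\varepsilon)$ is just the covering number of $X$. One can build compact spaces whose covering numbers at scales $\varepsilon$ and $\varepsilon/2$ differ by a super-polynomial factor along a sequence $\varepsilon=e^{-n_jr}$.
\end{itemize}

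The missing idea is that the radius is tied to the time, so shrinking it by a fixed factor costs only a fixed time shift. Fix $q\geq1$ with $e^{-qr}\leq\frac12$. Since $d_n\leq d_{n+q}$ and $e^{-(n+q)r}\leq\frac12e^{-nr}$, every $(n+q,e^{-(n+q)r})$-spanning set is $(n,\frac12e^{-nr})$-spanning. Hence
\[
N(n,e^{-nr})\leq N\bigl(n,\tfrac12e^{-nr}\bigr)\leq N\bigl(n+q,e^{-(n+q)r}\bigr).
\]
Therefore
\[
\frac1n\Lambda_n(e^{-nr})\leq\frac{n+q}{n}\cdot\frac1{n+q}\log N\bigl(n+q,e^{-(n+q)r}\bigr).
\]
A fixed index shift together with a factor tending to $1$ changes neither $\limsup$ nor $\liminf$ in $[0,\infty]$. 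So you get $\limsup_n\frac1n\Lambda_n\leq\overline h^r_{top}(f,X)$ and $\liminf_n\frac1n\Lambda_n\leq\underline h^r_{top}(f,X)$. This is precisely the comparison in Remark~\ref{rem 2.2}, which the paper invokes. With it, your argument is complete. Only growth rates are compared, never individual terms, which is why this works in an arbitrary compact metric space.
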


	Theorem~\ref{thm 1.1} allows the maximizing probability measure
	to vary with $n$, whereas a variational principle over invariant
	measures requires a quantity attached to each fixed $\mu$.
	Following the empirical-measure approach of
	Pfister and Sullivan \cite{PS07}, we instead count orbit segments
	whose empirical measures lie in a neighborhood of $\mu$.
	Applying the same
	localization to neutralized Bowen ball  and rate
	distortion with fixed time and scale yields the following result.

	\begin{theorem}\label{thm 1.2}
		Let $(X,d)$ be a compact metric space and $f:X\to X$  be a homeomorphism. For each $r>0$ and
		$\mu\in\mathcal M_f(X)$, the $r$-neutralized localized Brin-Katok entropy and  localized rate-distortion quantity are equal with the $r$-neutralized PS entropy:
		\begin{align*}
			\mathcal H_\mu^r(f,X)=\mathcal R_\mu^r(f,X)
			=\overline h_\mu^{PS,r}(f,X).
		\end{align*}
		Moreover, we have
		\begin{align*}
			\overline h_{top}^r(f,X)
			=\max_{\mu\in\mathcal M_f(X)}
			\mathcal R_\mu^r(f,X)
			=\max_{\mu\in\mathcal M_f(X)}
			\overline h_\mu^{PS,r}(f,X).
		\end{align*}
	\end{theorem}
	The corresponding variational principle for the lower
	$r$-neutralized entropy fails, even though the lower
	probability-measure formulas remain valid. The example in
	Section~\ref{sec 2} also shows that the lower and upper PS
	entropies can differ for an ergodic measure.

We next estimate PS entropy for smooth dynamical systems.
For a hyperbolic ergodic measure $\mu$ of a
$C^{1+\alpha}$ diffeomorphism, Dong and Qiao \cite{DQ25}
proved
\begin{align}\label{equ 1.2}
	\underline h_\mu^{BK,r}(f)
	=\overline h_\mu^{BK,r}(f)
	=h_\mu(f)+r\dim_H\mu.
\end{align}
PS entropy also counts orbit segments that are not typical
with respect to $\mu$, so this formula does not determine
its dimension term.

	We first consider such segments in the whole manifold and relate their complexity to measure entropy, Lyapunov exponents and the dimension of the manifold.
	Let $f:M\to M$ be a $C^1$ diffeomorphism of a closed Riemannian $m$-manifold. For $\mu\in\mathcal M_f(M)$, let $\lambda_1(x)>\ldots>\lambda_{\ell(x)}(x)$ be the distinct Lyapunov exponents at a regular point $x$ with $d_i(x)$ the   multiplicity of $\lambda_i(x)$. We define the following four quantities:
	\begin{align*}
		\mathcal X^+(f,\mu)
		&= \int_M \sum_{j=1}^{\ell(x)} d_j(x)\max\{\lambda_j(x),0\}\,\mathrm d\mu(x),
		& \mathcal X_{\max}^+(f,\mu)
		&= \int_M \max\{\lambda_1(x),0\}\,\mathrm d\mu(x),\\[2mm]
		\mathcal X^-(f,\mu)
		&= \int_M \sum_{j=1}^{\ell(x)} d_j(x)\max\{-\lambda_j(x),0\}\,\mathrm d\mu(x),
		& \mathcal X_{\max}^-(f,\mu)
		&= \int_M \max\{-\lambda_{\ell(x)}(x),0\}\,\mathrm d\mu(x).
	\end{align*}
	Denote by \begin{align*}
		\Phi^*_1(f,\mu)=\min\left\{\mathcal X^+(f,\mu),\mathcal X^-(f,\mu)\right\},\Phi^*_2(f,\mu)=\min\left\{\mathcal X^+_{\max}(f,\mu),\mathcal X^-_{\max}(f,\mu)\right\}
	\end{align*}

	We obtain the following upper bounds, with a stronger estimate under
	higher smoothness.

	\begin{theorem}[Upper bound]\label{thm 1.3}
		Let $f:M\to M$ be a diffeomorphism of a closed Riemannian
		$m$-manifold, and let $r>0$ and $\mu\in\mathcal M_f(M)$.
		\begin{enumerate}
			\item If $f$ is $C^1$, then
			\begin{align*}
				\overline h_\mu^{PS,r}(f,M)
				\leq mr+\Phi_1^*(f,\mu).
			\end{align*}

			\item If $f$ is $C^k$ for some integer $k\geq2$, then
			\begin{align*}
				\overline h_\mu^{PS,r}(f,M)
				\leq mr+\min\biggl\{
				\Phi_1^*(f,\mu),
				h_\mu(f)+\frac{m}{k}\Phi_2^*(f,\mu)
				\biggr\}.
			\end{align*}
		\end{enumerate}
	\end{theorem}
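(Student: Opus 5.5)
Throughout, I take $\overline h_\mu^{PS,r}(f,M)$ to be the quantity in Theorem~\ref{thm 1.2}:
\[
\lim_{F\downarrow\mu}\limsup_{n\to\infty}\frac1n\log N(F,n,e^{-nr}),
\]
where $N(F,n,\varepsilon)$ is the maximal cardinality of an $(n,\varepsilon)$-separated set of points $x$ whose empirical measures $\frac1n\sum_{i<n}\delta_{f^ix}$ lie in the weak$^*$ neighborhood $F$ of $\mu$. The plan is to bound this by a covering count. Fix $\delta>0$ and a small scale $\rho>0$, and choose $F$ so small that every orbit segment with empirical measure in $F$ nearly reproduces the time-averaged derivative data of $\mu$ at scale $\rho$.

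\textbf{Part (1).} Cover $M$ by $\rho$-balls. A point with empirical measure in $F$ has an itinerary of length $n$ through these balls. Because the empirical measure is close to $\mu$, the number of admissible itineraries is at most $e^{n(h_\mu(f)+\delta)}$, after refining the partition so that $\mu$ charges no boundaries. For a fixed itinerary, the set of points following it is contained in a $\rho$-Bowen-type ball. I will estimate how many $(n,e^{-nr})$-separated points it can hold by linearizing: on $\rho$-balls, $f$ is $C^0$-close to $D_xf$. The count therefore reduces to covering the image of $B(x,e^{-nr})$-scale boxes under products of derivatives $D_{f^{n-1}x}f\cdots D_xf$.

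A cruder and more robust route for (1) avoids entropy entirely. Count $(n,e^{-nr})$-separated points in the dynamical ball directly. The volume of $B_n(x,\rho)$ is about $\prod_j e^{-n d_j\max(\lambda_j,0)}$. The $C^1$ assumption together with subadditivity (Kingman applied to $\log|\wedge^k Df^n|$, made uniform over points whose empirical measure is near $\mu$ via upper semicontinuity of $\nu\mapsto\int\log\|\wedge^k Df\|$ limits) then gives $\#\le e^{n(mr+\mathcal X^-(f,\mu)+\delta)}$ by volume comparison. Applied to $f^{-1}$, which has the same separated sets up to reindexing, the same argument gives the bound with $\mathcal X^+$. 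Taking the minimum yields $mr+\Phi_1^*$.

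More precisely, for (1) I would use the volume argument with all of $M$. The separated set has $e^{-nr}/2$-Bowen balls that are pairwise disjoint. Each such ball has volume at least $e^{-nmr}\exp(-n\,\mathcal X^+(f,\nu_n)-n\delta)$, by the Jacobian of $f^n$ along expanding directions, which is controlled by $\int\log\|\wedge Df\|$ against the empirical measure $\nu_n$. Summing the volumes against $\mathrm{vol}(M)$ bounds the count. The main technical point is replacing the exponents of the empirical measures by those of $\mu$. Since $\nu\mapsto\lim_k\frac1k\int\log\|\wedge^j Df^k\|\,d\nu$ is upper semicontinuous and $\mathcal X^+$ is a maximum over $j$ of such quantities, closeness to $\mu$ gives an upper bound, which is the direction needed.

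\textbf{Part (2).} For $C^k$ maps, the new term comes from Yomdin theory. Split the count into the itineraries, of which there are at most $e^{n(h_\mu+\delta)}$ (this is where $h_\mu$ enters via the localization to $F$), times the number of $e^{-nr}$-pieces needed inside one itinerary cell. Within a cell of scale $\rho$, Yomdin--Buzzi reparametrization lemmas show that $f^n$ restricted to the cell can be covered by about $e^{n\frac{m}{k}\mathcal X^+_{\max}}$ maps with bounded $C^k$ norm. Each such piece is then subdivided into $e^{nmr}$ pieces at scale $e^{-nr}$. Applying the same argument to $f^{-1}$ gives $\mathcal X^-_{\max}$, so together these yield $h_\mu+\frac mk\Phi_2^*+mr$. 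The minimum with the bound from (1) is immediate.

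\textbf{Main obstacle.} The hardest step is making the Yomdin reparametrization count depend on the local growth $\max(\lambda_1,0)$ averaged along the orbit, rather than on $\log\|Df\|_\infty$. I would handle this by working with $f^q$ for large $q$ and rescaling charts at each step by $\|Df^q\|$ at the current point. The resulting count is then a product over blocks of $\|Df^q(f^{iq}x)\|^{m/k}$, which the empirical-measure localization converts into $e^{n\frac mk(\mathcal X^+_{\max}+\delta)}$.
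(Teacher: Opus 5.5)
Your Part~(2), and the passage from $\mathcal X^+$ to $\mathcal X^-$ through $f^{-1}$, follow the paper's route. The paper uses a Pfister--Sullivan bound on $(n,\delta)$-separated subsets of $M_{n,F}$ and applies the Burguet--Liao--Yang reparametrization along $f^p$-blocks with the local norms $\max\{1,\|D_{f^tx}f^p\|\}$ (Lemma~\ref{lem 3.4}). Each reparametrized cube is then subdivided into about $e^{nmr}$ pieces, and the $d_n$-isometry $x\mapsto f^{n-1}x$ from $(M,f)$ to $(M,f^{-1})$ preserves empirical measures.

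The gap is in Part~(1). Everything there rests on the claim that each disjoint ball $B_n(x,\tfrac12e^{-nr})$, with $x$ in the separated set, has volume at least $e^{-nmr}\exp\bigl(-\tfrac1q\sum_{t<n}\Psi_q(f^tx)-n\delta\bigr)$. You only assert this, ``by the Jacobian of $f^n$ along expanding directions'', and that is not the mechanism. The orbits involved are arbitrary orbits whose empirical measures lie in $F$, not $\mu$-typical ones, so there is no invariant expanding bundle along them. Moreover, $B_n(x,\epsilon)=\bigcap_{j<n}f^{-j}B(f^jx,\epsilon)$ is an intersection of $n$ nonlinear preimages of balls whose ellipsoidal shapes rotate along the orbit. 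Even for a linear cocycle, a lower bound on such an intersection needs an argument. One option is $\mathrm{vol}(AK\cap B)\ge\mathrm{vol}(AK)/N(AB,B)$ for symmetric convex $K\subseteq B$, which uses Brunn--Minkowski and the symmetry of $K$. For a $C^1$ map, however, the sets $f^j(B_n(x,\epsilon))$ are neither convex nor symmetric. Let $\omega$ be the modulus of continuity of $Df$. Bend a slab much thinner than $\omega(\epsilon)\epsilon$ by an amount $\omega(\epsilon)\epsilon$, then expand strongly across it. This can discard most of the slab, whereas the linearized Bowen ball keeps essentially all of it. So linearization alone does not give your bound. A per-point estimate would need a genuine inductive, graph-transform-type argument that loses a constant at every step and therefore needs $q$-blocks; none of this is in your plan.

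The paper never needs a lower bound on volumes, only the easy upper direction. Take a Voronoi partition $\mathcal P_a$ with $a=e^{-nr}/(4L^q)$. The set $f^q(B(x,2a))$ lies in the $a$-neighborhood of the ellipsoid $\exp_{f^qx}(D_xf^q(B(0,2a)))$. Packing disjoint $a/2$-balls into that neighborhood shows that $f^q(Q)$ meets at most $Ce^{\inf_Q\Psi_q}$ atoms, so $H_\nu(f^{-q}\mathcal P_a\mid\mathcal P_a)\le\log C+\int\Psi_q\,d\nu$. The partitions $\bigvee_kf^{-(j+kq)}\mathcal P_a$ separate a maximal $(n,e^{-nr})$-separated set $E_n\subseteq M_{n,F}$. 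Apply the chain rule to the uniform measure $\nu_n$ on $E_n$, with the convex neighborhood $F=\{\nu:\int\Psi_q\,d\nu<\int\Psi_q\,d\mu+\eta\}$, so that $\frac1n\sum_tf^t_*\nu_n\in F$. This yields $\overline h_\mu^{PS,r}(f,M)\le mr+\frac{\log C}q+\frac1q\int\Psi_q\,d\mu$, and letting $q\to\infty$ finishes.

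The same device repairs your step of ``replacing the exponents of the empirical measures by those of $\mu$'', which does not work as written. The map $\nu\mapsto\lim_k\frac1k\int\log\|\wedge^jDf^k\|\,d\nu$ is upper semicontinuous on $\mathcal M_f(M)$. But the empirical measures $\mathcal E_n(x)$ are not invariant, and what you must control is a finite-time sum along the orbit. Also, for non-ergodic $\mu$, $\mathcal X^+(f,\mu)$ is not the maximum over $j$ of these integrals, because the maximum sits inside the integral. The correct approach is to fix $q$ and use the continuous function $\Psi_q=\log\max_k\|\wedge^kDf^q\|$.
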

	The two terms in the minimum of Theorem~\ref{thm 1.3}
	are not comparable in general, as illustrated by
	Examples \ref{exam 3.6}.
	For $\mu\in\mathcal E_f(M)$, set
	\begin{align}\label{equ 1.3}
		h_\mu^*(f)
		=\inf_{F\ni\mu}\
		\sup_{\nu\in F\cap\mathcal E_f(M)}h_\nu(f),
	\end{align}
	where $F$ ranges over neighborhoods of $\mu$ in
	$\mathcal M_f(M)$.

	\begin{theorem}(Lower bound)\label{thm 1.4}
		Let $f:M\to M$ be a $C^{1+\alpha}$ diffeomorphism of a closed
		Riemannian $m$-manifold and $r>0$. For every
		$\mu\in\mathcal E_f(M)$, we have
		\begin{align*}
			\underline h_\mu^{PS,r}(f,M)\geq  h_\mu^*(f)+mr\geq	h_\mu(f)+mr.
		\end{align*}
	\end{theorem}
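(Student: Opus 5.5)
We need the definition of the lower PS entropy. We take it to be the following: for a neighborhood $F$ of $\mu$, let $N(F,n,\varepsilon)$ be the maximal cardinality of an $(n,\varepsilon)$-separated set of points $x$ whose empirical measure $\frac1n\sum_{i<n}\delta_{f^ix}$ lies in $F$. Then $\underline h^{PS,r}_\mu$ is $\inf_F \liminf_n \frac1n\log N(F,n,e^{-nr})$, possibly with an extra limit over the scale constant. The plan has two ingredients: the entropy of nearby ergodic measures, and a factor $e^{nmr}$ gained by refining every Bowen ball of fixed size into sub-balls of radius $e^{-nr}$ that stay disjoint in the $d_n$ metric.

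\textbf{Step 1 (reduction).} The inequality $h^*_\mu\ge h_\mu$ is trivial (take $\nu=\mu$). Fix a neighborhood $F$ of $\mu$ and choose a smaller closed convex neighborhood $F'$ with $F'\subset F$. By the definition \eqref{equ 1.3} of $h^*_\mu$, there is an ergodic $\nu\in F'$ with $h_\nu(f)\ge h^*_\mu(f)-\delta$. It then suffices to show
\begin{align*}
\liminf_{n\to\infty}\frac1n\log N(F,n,e^{-nr})\ \ge\ h_\nu(f)+mr-\delta .
\end{align*}
By Katok's entropy formula \cite{Kat80} and the ergodic theorem, for a fixed $\varepsilon_0$ and all large $n$ there is a set $\Lambda_n$ with $\nu(\Lambda_n)>1/2$. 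It consists of points whose empirical measures up to time $n$ (and also up to times $n'\le n$ close to $n$) lie in $F'$. It contains an $(n,4\varepsilon_0)$-separated subset $E_n$ of cardinality at least $e^{n(h_\nu-\delta)}$.

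\textbf{Step 2 (volume refinement).} For each $x\in E_n$, look at $B_n(x,\varepsilon_0)$. The key estimate is that this Bowen ball contains $\gtrsim e^{n(mr-\delta)}$ points that are pairwise $(n,e^{-nr})$-separated and whose empirical measures lie in $F$. We first try a volume argument. Since $f$ is $C^{1}$, $\|Df^{\pm1}\|\le L$, so
\begin{align*}
B_n(y,e^{-nr})\supset B(y,e^{-nr}L^{-n}).
\end{align*}
This lower bound is too crude, however. Instead we use that $f^j(B_n(y,\rho))$ is controlled by the Lyapunov exponents along $\nu$-typical orbits. This is where $C^{1+\alpha}$ enters: Pesin charts / Lyapunov charts. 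In Pesin charts around $f^jx$ of size $\sim e^{-\epsilon j}$, the Bowen ball $B_n(x,\rho)$ looks like a box of side $\rho e^{-n\lambda_i^+}$ in the expanding directions and $\rho$ in the contracting ones. The ratio of volumes
\begin{align*}
\frac{\operatorname{vol}B_n(x,\varepsilon_0)}{\operatorname{vol}B_n(y,e^{-nr})}
\end{align*}
is then $\approx e^{nmr}$, since both boxes are sheared by the same linear cocycle. A maximal $(n,e^{-nr})$-separated subset of $B_n(x,\varepsilon_0/2)$ therefore has cardinality $\ge e^{n(mr-\delta)}$. Points within $\varepsilon_0$ of $x$ in $d_n$ have empirical measures $\varepsilon_0$-close to that of $x$ in a Lipschitz-dual metric, hence lie in $F$ if $\varepsilon_0$ is small. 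The sets coming from different $x\in E_n$ are mutually $e^{-nr}$-separated, because the balls $B_n(x,\varepsilon_0)$ are disjoint. Multiplying gives the bound.

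\textbf{Main obstacle.} The hard step is making the volume comparison rigorous uniformly at scale $e^{-nr}$, which is exponentially small but fixed relative to the Pesin-chart sizes. The plan is to restrict to a Pesin block of $\nu$-measure $>1/2$, to use tempered charts with $\epsilon\ll r$, and to apply the $C^{1+\alpha}$ nonlinearity estimate so that $f$ is close to $Df$ on the relevant boxes. Alternatively, one could avoid nonuniformity by counting at the final time only. In that variant, pick $e^{nmr}$ points in a small Euclidean ball around each $f^{n}x$ at mutual distance $e^{-nr}$, and pull them back via $f^{-n}$. Their backward orbits stay within $d_n$-distance $\varepsilon_0$ provided the ball is chosen inside a local "stable-like" neighborhood, using Pliss/Pesin shadowing. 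If the volume route stalls, this pull-back construction is the first alternative to try.
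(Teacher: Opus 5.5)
Your Step~1 is sound and matches the paper's structure. The paper first proves $\underline h_\nu^{PS,r}(f,M)\ge h_\nu(f)+mr$ for every ergodic $\nu$: Brin--Katok gives an $(n,4\delta)$-separated set $E_n$ with $\#E_n\gtrsim e^{n(h_\nu-\tau)}$ inside a Pesin block intersected with $M_{n,F_0}$. It then applies this to ergodic $\nu$ in an open $G\subseteq F$ to reach $h^*_\mu$. Two small fixes: choose your $\nu$ in the interior of $F'$, otherwise $\nu\{x:\mathcal E_n(x)\in F'\}$ need not tend to $1$. Also, the families attached to distinct $x\in E_n$ are separated because $E_n$ is $4\varepsilon_0$-separated (plus the triangle inequality), not merely because the balls are disjoint.

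The genuine gap is Step~2, which carries the entire $mr$ term and is only heuristic.
\begin{itemize}
\item Lyapunov charts along the orbit of $x$ have size $\sim e^{-\epsilon j}$, so they say nothing about $B_n(x,\varepsilon_0)$ for fixed $\varepsilon_0$. You must work inside $B_n(x,e^{-c\epsilon n})$ and absorb an $e^{-cm\epsilon n}$ loss.
\item More seriously, a volume count of a maximal $(n,e^{-nr})$-separated set needs two things: a lower bound on the volume of the large Bowen ball, and an \emph{upper} bound on $\operatorname{vol}B_n(y,e^{-nr})$ that is uniform over \emph{all} $y$ near $x$, not only typical ones. The phrase ``both boxes are sheared by the same linear cocycle'' proves neither. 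Doing it honestly, with centre directions, non-conformal unstable blocks and nonlinear error terms, needs a graph-transform argument at least as hard as the claim itself.
\item Your fallback fails outright. Pulling back a full ball around $f^nx$ by $f^{-n}$ expands the stable components exponentially. Only points whose stable displacement at time $n$ is exponentially small stay $d_n$-close to $x$, so the count loses its stable factor.
\end{itemize}

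The missing idea is to prescribe coordinates at \emph{both} ends, as in Lemma~\ref{lem 3.8}.
\begin{itemize}
\item On a Pesin block $\{\ell\le L\}$, write chart coordinates $v_j=(p_j,q_j)$, with $p_j$ the stable-and-centre part and $q_j$ the unstable part.
\item Fix $p_0=\xi$ and $q_{n-1}=\zeta$. Obtain the orbit segment as the fixed point of the operator that pushes $p$ forward and pulls $q$ backward; in the weighted norm $\max_j e^{-2\epsilon j}|v_j|$ this operator is a contraction.
\item Boundary data of size $\sim e^{-4\epsilon n}$ then give genuine orbit segments with $d_n(x,y)\le C_3e^{-\epsilon n}$, so $\mathcal E_n(y)\in F$. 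A grid of spacing $4Ke^{-nr}$ yields $\gtrsim e^{m(r-4\epsilon)n}$ such points.
\item Separation comes for free. Since $\xi$ and $\zeta$ are read off from $y$ at times $0$ and $n-1$ through bi-Lipschitz charts, $\max\{|\xi-\xi'|,|\zeta-\zeta'|\}\le K\,d_n(y,y')$.
\end{itemize}
This avoids any volume estimate for Bowen balls of non-typical points, which is exactly where your argument stalls.
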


	We cannot directly apply  \cite{DQ25}  to give  estimations for $r$-neutralized topological entropies, while
	the above estimates
	for the $r$-neutralized PS entropy, together with the variational principle,
	can give bounds for the $r$-neutralized topological entropy.
	If $f$ is $C^{1+\alpha}$, then  for every $r>0$,
	\begin{align}\label{equ 1.4}
		h_{top}(f)+r\dim M
		\leq\underline h_{top}^r(f,M)
		\leq\overline h_{top}^r(f,M)
		\leq r\dim M+
		\sup_{\mu\in\mathcal M_f(M)}\Phi_1^*(f,\mu).
	\end{align}
	For $C^{1+\alpha}$ or $C^k$ ($k\geq2$)  diffeomorphisms, we can also strengthen the upper bound  by replacing $\Phi_1^*(f,\mu)$ with the measure-theoretic quantity in Theorem \ref{thm 1.3}.

	The preceding bounds raise the question of when the
	$r$-neutralized PS entropy admits an exact formula in terms of
	measure entropy and the dimension of the manifold. We establish
	such a formula for two classes of systems: $C^\infty$
	diffeomorphisms and $C^1$ Anosov diffeomorphisms. In each case,
	the lower and upper PS entropies agree, and the corresponding
	topological entropies satisfy an analogous formula.

	\begin{theorem}\label{thm 1.5}
		Let $f:M\to M$ be either a $C^\infty$ diffeomorphism or a $C^1$ Anosov diffeomorphism of a closed Riemannian manifold. Then for every $r>0$ and $\mu\in\mathcal E_f(M)$, one has
		\begin{align*}
			\underline h_\mu^{PS,r}(f,M)
			=\overline h_\mu^{PS,r}(f,M)
			=h_\mu(f)+r\dim M,\\
				\underline h_{top}^r(f,M)
				=\overline h_{top}^r(f,M)
				=h_{top}(f)+r\dim M.
		\end{align*}
	\end{theorem}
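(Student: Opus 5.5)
The plan is to squeeze $\underline h_\mu^{PS,r}(f,M)\le\overline h_\mu^{PS,r}(f,M)$ between the lower bound of Theorem~\ref{thm 1.4} and the upper bound of Theorem~\ref{thm 1.3}, and then to pass to topological entropy through the variational principle of Theorem~\ref{thm 1.2}.

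\emph{The $C^\infty$ case.} For $\mu\in\mathcal E_f(M)$, Theorem~\ref{thm 1.4} gives $\underline h_\mu^{PS,r}\ge h_\mu(f)+mr$, since a $C^\infty$ map is $C^{1+\alpha}$. Theorem~\ref{thm 1.3}(2) holds for every integer $k\ge2$, so
\[
\overline h_\mu^{PS,r}(f,M)\le mr+h_\mu(f)+\frac mk\Phi_2^*(f,\mu).
\]
Here $\Phi_2^*(f,\mu)\le \log\max\{\|Df\|_\infty,\|Df^{-1}\|_\infty\}<\infty$, and this bound is independent of $k$. Letting $k\to\infty$ therefore gives equality.

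For the topological statement, Theorem~\ref{thm 1.2} gives $\overline h_{top}^r=\max_{\mu\in\mathcal M_f}\overline h_\mu^{PS,r}$. I need an upper bound for nonergodic $\mu$ as well. Theorem~\ref{thm 1.3}(2) applies to every invariant $\mu$, giving $\overline h_\mu^{PS,r}\le mr+h_\mu(f)+\frac mk C\le mr+h_{top}(f)+\frac mkC$. Letting $k\to\infty$ yields $\overline h^r_{top}\le h_{top}+mr$. The lower bound $\underline h^r_{top}\ge h_{top}+mr$ is \eqref{equ 1.4}.

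\emph{The $C^1$ Anosov case.} This is the main obstacle, because Theorem~\ref{thm 1.4} requires $C^{1+\alpha}$ and Theorem~\ref{thm 1.3}(1) only gives $mr+\Phi_1^*$. I would prove both bounds directly, using uniform hyperbolicity and the local product structure. The Anosov map is expansive with the specification property, so $h_\mu^*=h_\mu$ type arguments are available (via entropy approximation by horseshoe/periodic measures, or directly).

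For the \emph{lower bound}, I take $(n,\varepsilon)$-separated orbit segments whose empirical measures are near $\mu$. There are about $e^{n(h_\mu-\delta)}$ of them, by Katok's formula and ergodicity. Inside each fixed Bowen ball $B_n(x,\varepsilon)$ I then count $e^{-nr}$-separated points. Locally $B_n(x,\varepsilon)$ is a product of an unstable piece of size $\sim\varepsilon\|Df^n|_{E^u}\|^{-1}$ and a stable piece of size $\sim\varepsilon$. Points at distance $\ge e^{-nr}$ at time $0$ (or at time $n$ in the unstable direction) are $(n,e^{-nr})$-separated. The key is to use separation at the best time along each direction: stable directions are separated at time $0$, and unstable directions at time $n$, where $f^nB_n$ has unstable size $\sim\varepsilon$. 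This gives about $e^{nr\dim E^s}\cdot e^{nr\dim E^u}=e^{nrm}$ points.

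I also need these points to have empirical measures close to $\mu$. This holds because they stay $\varepsilon$-close along the orbit, and the PS localization allows an $\varepsilon$-dependent neighborhood. Multiplying the two counts gives $h_\mu+mr$.

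For the \emph{upper bound}, I cover each $(n,\varepsilon)$-Bowen ball of a near-$\mu$ orbit by $(n,e^{-nr})$-balls. With bounded distortion in the $C^1$ uniform setting, this means up to subexponential errors, using a uniform modulus of continuity of $Df$ over small scales. Using both forward and backward contraction, $B_n(x,\varepsilon)$ is covered by about $e^{nrm+o(n)}$ such balls. The number of Bowen balls needed is at most $e^{n(h_\mu+\delta)}$ for orbits with empirical measure near $\mu$. This count would follow from the Pfister--Sullivan upper estimate, using expansiveness and upper semicontinuity of entropy.

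The delicate point is the $C^1$ distortion control at exponentially small scales $e^{-nr}$. I would handle it with the uniform continuity of $Df$ and the continuous splitting, accepting $e^{o(n)}$ errors. The topological formula then follows exactly as in the $C^\infty$ case, using upper semicontinuity of $\mu\mapsto h_\mu$ and the variational principle.
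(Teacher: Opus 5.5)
Your proposal follows the paper's proof. The $C^\infty$ case is exactly Corollary~\ref{cor 3.10}: let $k\to\infty$ in Theorem~\ref{thm 1.3}(2) against Theorem~\ref{thm 1.4}, and since the upper bound holds for every invariant $\mu$, the variational principle gives the topological formula. The $C^1$ Anosov case is the paper's combination of the fixed-scale Pfister--Sullivan estimates (Lemma~\ref{lem 3.2}) with a uniform $\epsilon^{-m}$ count inside Bowen balls, using stable data at time $0$ and unstable data at time $n-1$.

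The only adjustments are these. First, no distortion control or $e^{o(n)}$ loss is needed: in exponential charts with the adapted norm, the contraction estimate makes $y\mapsto(\xi_y,\zeta_y)$ uniformly bi-Lipschitz from $(B_n(x,\delta),d_n)$ onto its image (Lemma~\ref{lem 3.11} together with the boundary-value construction). That is how your ``locally a product'' should be made precise, using linear chart coordinates rather than foliation holonomies. Second, take the centers $(n,4\delta)$-separated, so that points coming from different Bowen balls stay $(n,e^{-nr})$-separated.
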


In Example~\ref{exam 3.12}, we will show that the first forluma fails for $C^2$ diffeomorphism .

Compact invariant sets are central in smooth dynamics, arising as attractors, repellers, homoclinic closures, and general hyperbolic sets with rich recurrent and chaotic behavior. Their fractal geometry restricts available orbit segments and reveals fine structures invisible to the ambient manifold dimension alone. Understanding their dimensions is thus longstanding. McCluskey and Manning \cite{MM83} studied surface horseshoe dimensions along unstable manifolds; Takens \cite{Tak88} investigated Hausdorff dimension and limit capacity of dynamically defined Cantor sets; Palis and Viana \cite{PV88} examined how horseshoe dimensions vary with the diffeomorphism; Wang, Wang, Cao and Zhao \cite{WWCZ} derived dimension formulas for average conformal hyperbolic sets. The dimension of an invariant set and that of a supported measure need not agree. PS entropy counts orbit segments in $\Lambda$ whose empirical measures are close to $\mu$. Because these segments remain in $\Lambda$, the whole-manifold lower bound involving $\dim M$ does not apply directly. If $f$ is $C^{1+\alpha}$, for every $\mu\in\mathcal E_f(\Lambda)$,
	we nevertheless have
	\begin{align*}
		\underline h_\mu^{BK,r}(f)\leq 	\underline h_\mu^{PS,r}(f,\Lambda)
		\leq\overline h_\mu^{PS,r}(f,\Lambda)
		\leq r\dim M+
		\min\{\mathcal X^+(f,\mu),\mathcal X^-(f,\mu)\}.
	\end{align*}
	For hyperbolic $\mu$, \eqref{equ 1.2} gives the lower bound
	$h_\mu(f)+r\dim_H\mu$. These estimates also yield bounds for
	the topological entropies.
	A natural question is whether
	$\dim_H\Lambda$ can have an exact formula.   If
	$\Lambda$ is a mixing hyperbolic basic set of a $C^{1+\alpha}$
	surface diffeomorphism, then every $\mu\in\mathcal E_f(\Lambda)$
	satisfies
	\begin{align*}
		\underline h_\mu^{PS,r}(f,\Lambda)
		=\overline h_\mu^{PS,r}(f,\Lambda)
		=h_\mu(f)+r\dim_H\Lambda.
	\end{align*}
	The lower and upper topological entropies both equal
	$h_{top}(f|_\Lambda)+r\dim_H\Lambda$. Thus the PS formula
	involves the dimension of the set, while \eqref{equ 1.2}
	involves the dimension of the measure.

	The remainder of this paper is organized as follows. In Section~\ref{sec 2}, we disprove the variational conjecture   and establish the variational principles for $r$-neutralized topological entropies. In Section~\ref{sec 3}, we study the dimension estimates for the $r$-neutralized PS entropies on the whole  manifold   and obtain exact formulas for $C^{\infty}$ and Anosov diffeomorphisms. In Section~\ref{sec 4}, we study compact invariant sets and compare PS entropy with neutralized local entropy.

	\section{Counterexamples and variational principles for $r$-neutralized entropy}
	\label{sec 2}
	We first introduce some definitions of $r$-neutralized entropies and  give counterexamples to \cite[Conjecture~1]{DQ25}.  Then we establish some variational principles in terms of probability measures and invariant measures.

	\subsection{Counterexamples to the variational conjecture}
	\label{subsec 2.1}
	Let $(X,d)$ be a compact metric space and $f:X\to X$ be a homeomorphism. Fix $r>0$.
	For any $n\in \mathbb N$ and $x,y\in X$,  the $n$-th Bowen metric is  defined  as
	\begin{align*}
		d_n(x,y)=\max_{0\leq j<n}d(f^jx,f^jy).
	\end{align*}

	\begin{definition}
		Let $Z\subseteq X$ be a nonempty set.
		A set $E\subseteq X$ is called an $(n,e^{-nr})$-spanning
		set of $Z$ if $Z\subseteq \bigcup_{x\in E}B_n(x,e^{-nr})$. A set $F\subseteq Z$ is called an
		$(n,e^{-nr})$-separated set of $Z$ if for any distinct points $x,y\in F$, one has $d_n(x,y)\geq e^{-nr}$.
		Denote by $r_n(f,Z,e^{-nr})$ and $s_n(f,Z,e^{-nr})$
		the smallest
		cardinality of $(n,e^{-nr})$-spanning sets  and
		the largest cardinality of $(n,e^{-nr})$-separated sets of $Z$.	We use the convention that $s_n(f,\emptyset,e^{-nr})=r_n(f,\emptyset,e^{-nr})=1$.
	\end{definition}
	The \emph{upper and lower $r$-neutralized topological entropies of $Z\subseteq X$} are defined by \cite{DQ25}
	\begin{equation}\label{equ 2.1}
		\begin{split}
			\overline h_{top}^r(f,Z)=\limsup_{n\to\infty}\frac{1}{n}\log r_n(f,Z,d,e^{-nr})=\limsup_{n\to\infty}\frac{1}{n}\log s_n(f,Z,d,e^{-nr}),\\
			\underline h_{top}^r(f,Z)=\liminf_{n\to\infty}\frac{1}{n}\log r_n(f,Z,d,e^{-nr})=\liminf_{n\to\infty}\frac{1}{n}\log s_n(f,Z,d,e^{-nr}).
		\end{split}
	\end{equation}
	When $Z=X$, we omit $Z$ for simplicity.
	Although these quantities depend on $d$ compatible with the topology of $X$, we omit $d$ from the notation when no confusion can arise.

	\begin{remark}\label{rem 2.2}
		The two definitions in \eqref{equ 2.1} are equivalent.
		Choose an integer $q\geq1$ with $e^{-qr}<\frac{1}{2}.$
		Then by the standard comparison in \cite{YCZ24}, one has
		\begin{align*}
			r_n(f,Z,d,e^{-nr})
			\leq
			s_n(f,Z,d,e^{-nr})
			\leq
			r_n\left(f,Z,d,\frac{1}{2}e^{-nr}\right)
			\leq
			r_{n+q}(f,Z,d,e^{-(n+q)r}).
		\end{align*}
		Since $q$ is independent of $n$, the time shift $n\mapsto n+q$ does not affect either exponential growth rate.
	\end{remark}

	Recall that the box dimensions are defined as
	\begin{align*}
		\overline{\dim}_B(X,d)=\limsup_{\epsilon\to 0}\frac{\log N(X,\epsilon)}{-\log \epsilon},\quad \underline{\dim}_B(X,d)=\liminf_{\epsilon\to 0}\frac{\log N(X,\epsilon)}{-\log \epsilon},
	\end{align*}
	where $N(X,\epsilon)$ denotes smallest number of balls covering $X$. If they are equal, we denote by $\dim_B(X,d)$ their common value.
	Then we can observe the following basic facts, which are key tools in the examples:
	\begin{align}\label{equ 2.2}
		\overline h_{top}^r(f,Z)\geq r \overline{\dim}_B(Z,d),\quad \underline h_{top}^r(f,Z)\geq r \underline{\dim}_B(Z,d)
	\end{align}
	The relation between neutralized entropy (by considering $r\to 0$) and classical topological entropy has attracted considerable attention. The following  Example shows that they can differ, which also answers the open question of \cite{YCZ24}.

	\begin{example}\label{exam 2.3}
		Let $f:\mathbb S^1\to\mathbb S^1$ be the irrational rotation
		$f(x)=x+\alpha\pmod 1$, where $\alpha\notin\mathbb Q$.
		Equip $\mathbb S^1$ with its geodesic metric. For
		$\nu,\eta\in\mathcal M(\mathbb S^1)$, define
		\begin{align*}
			W_1(\nu,\eta)
			=\inf_{\pi\in\Pi(\nu,\eta)}
			\int_{\mathbb S^1\times\mathbb S^1}
			d(x,y)\,\mathrm d\pi(x,y),
		\end{align*}
		where $\Pi(\nu,\eta)$ is the set of probability measures on
		$\mathbb S^1\times\mathbb S^1$ with marginals $\nu$ and $\eta$. Consider the topological  dynamical system
		$(M(\mathbb S^1),f_*,W_1)$, where  $f_*:\mathcal M(\mathbb S^1)\to\mathcal M(\mathbb S^1)$
		is the induced map of $f$. Then one has
		\begin{align*}
			\lim_{r\to 0}\underline h_{top}^r
			(f_*,\mathcal M(\mathbb S^1))
			=\lim_{r\to 0}\overline h_{top}^r
			(f_*,\mathcal M(\mathbb S^1))
			=+\infty
			>h_{top}(f_*)=0.
		\end{align*}
	\end{example}

	\begin{proof}
		By \cite{GW95}, since $h_{top}(f)=0$, one has
		$h_{top}(f_*)=0$.
		Take distinct points $x_1,\ldots,x_N\in\mathbb S^1$ with $N\geq 2$.
		The set
		\begin{align*}
			\Delta_N
			=\left\{\sum_{i=1}^N p_i\delta_{x_i}:
			p_i\geq0,\ \sum_{i=1}^N p_i=1\right\}.
		\end{align*}
		is bi-Lipschitz equivalent   to the standard
		$(N-1)$-dimensional simplex under $W_1$ . Hence
		$\underline{\dim}_B\mathcal M(\mathbb S^1)\geq N-1$.
		Since $N$ is arbitrary,
		$\underline{\dim}_B\mathcal M(\mathbb S^1)=+\infty$.
		Then by \eqref{equ 2.2},
		\begin{align*}
			\underline h_{top}^r
			(f_*,\mathcal M(\mathbb S^1))
			=r\,\underline{\dim}_B
			(\mathcal M(\mathbb S^1),W_1)
			=+\infty.
		\end{align*}
	\end{proof}

	We then give an example to show that the conjecture of \cite{DQ25} fails in general.

	\begin{example}\label{exam 2.4}
		Let $	\mathbb S^m
		=
		\{x\in\mathbb R^{m+1}:\|x\|=1\}$
		with $m\geq 1$ and $d$ be
		the induced geodesic distance. Let $H(x)=x_{m+1}$ be the height function on $\mathbb S^m$. Since the smooth vector field $-\nabla H$ is complete, it generates a smooth flow $\{\varphi_t\}_{t\in\mathbb R}$ \cite[Chapter~9]{Lee13}.		Take $f:=\varphi_1$.
		Then $f$ is a $C^\infty$ north--south diffeomorphism of $\mathbb S^m$. Moreover, for   every $r>0$,
		\begin{align*}
			\overline h_{top}^r(f)
			\geq
			\underline h_{top}^r(f)
			\geq mr>0=\underline h_\mu^{BK,r}(f)
			=
			\overline h_\mu^{BK,r}(f),
		\end{align*}
		for every $\mu\in \mathcal M_f(\mathbb S^m)$.
	\end{example}

	\begin{proof}
		By \cite{Fal03}, one knows $\overline{\dim}_B(\mathbb S^m,d)=\underline{\dim}_B(\mathbb S^m,d)=m$. Then by \eqref{equ 2.2},
		\begin{align*}
			\overline h_{top}^r(f)
			\geq	\underline h_{top}^r(f)
			\geq
			mr>0.
		\end{align*}

		One can notice that  the recurrent points are only
		$p_-=(0,\ldots,0,1)$  $p_+=(0,\ldots,0,-1)$ \cite{GL08}.
		Then by the Poincar\'e recurrence theorem \cite{Wal82}, each $f$-invariant measure is of the form:  $\mu=c\delta_{p_-}+(1-c)\delta_{p_+}$, where $c\in [0,1]$.
		Then for $\mu$-a.e. $x$,
		$\mu\bigl(B_n(x,e^{-nr})\bigr)\geq\mu(\{x\})>0$.
		It follows that
		$	\underline h_\mu^{BK,r}(f)
		=
		\overline h_\mu^{BK,r}(f)
		=0.$
	\end{proof}

	The conjecture even
	fails for each compatible metric when $r$ is small enough.

	\begin{example}\label{exam 2.5}
		Let $Q=[0,1]^{\mathbb N}$ be the Hilbert cube, and let
		\begin{align*}
			X=(\mathbb Z\times Q)\cup\{\infty\}
		\end{align*}
		be the one-point compactification of $\mathbb Z\times Q$. Define the
		homeomorphism $f:X\to X$ by
		\begin{align*}
			f(k,q)=(k+1,q),
			\qquad
			f(\infty)=\infty.
		\end{align*}
		Then $(X,f)$ is uniquely ergodic and $\mathcal M_f(X)=\{\delta_\infty\}.$
		Moreover, for every $d\in \mathcal D(X)$ and  $r>0$, one has
		\begin{align*}
			\underline h_{top}^r(f)
			=
			\overline h_{top}^r(f)
			=+\infty,\quad \text{and}\quad
			\underline h_{\delta_\infty}^{BK,r}(f)
			=
			\overline h_{\delta_\infty}^{BK,r}(f)
			=0.
		\end{align*}

	\end{example}

	\begin{proof}
		It is easy  to check that 	$\infty$ is the unique recurrent point.	By Poincar\'{e} recurrence theorem again, we obtain  $\mathcal M_f(X)=\{\delta_\infty\}$. Since $\delta_\infty(\{\infty\})=1$, for every $n\geq1$, we have $\delta_\infty\bigl(B_n(\infty,e^{-nr})\bigr)=1.$
		Thus  $\underline h_{\delta_\infty}^{BK,r}(f)
		=
		\overline h_{\delta_\infty}^{BK,r}(f)
		=0.$

		Notice that
		$\{0\}\times Q$ is a closed subspace of $X$ and the Hilbert cube has infinite topological dimension. Then
		$	\underline{\dim}_B(X,d)\geq
		\dim X=+\infty.$
		By \eqref{equ 2.2},	one has
		$\underline h_{top}^r(f,d)
		\geq
		r\underline{\dim}_B(X,d)
		=+\infty.$
	\end{proof}

	We first give the proof of Theorem \ref{thm 1.1}

	\begin{proof}[Proof of Theorem~\ref{thm 1.1}]
		We first prove the lower bound.
		Fix  $\nu\in \mathcal M(X)$. Since
		$$\nu\longmapsto
		-\frac{1}{n}\int_X\log\nu(B_n(x,\epsilon))\,
		\mathrm d\nu(x)$$
		is upper semicontinuous on $\mathcal M(X)$, so the supremum
		can be attained for each $n$.

		Denote $B_i=B_n(x_i,\frac{e^{-nr}}{3})$.
		Consider the  the family of neutralized Bowen balls $\{B_i\}_{i=1}^N$ covering $X$ with the cardinality $r_n(f,X,\frac{e^{-nr}}{3})$. Set $P_1:=B_1$ and $P_i=B_i\setminus \bigcup_{k<i}B_k$ and suppose each set is nonempty. Then $\mathcal P=\left\{P_1,\ldots,P_N\right\}$
		form a Borel partition  of $X$
		Denote by $\mathcal P(x)$ the atom containing $x$. Then for each $x\in X$, $\mathcal P(x)\subseteq B_n(x,e^{-nr})$.
		Consequently,
		\begin{align*}
			-\frac{1}{n}\int_X\log\nu\bigl(B_n(x,e^{-nr})\bigr)\,\mathrm d\nu(x)
			\leq\frac{1}{n} H_\nu(\mathcal P)
			\leq\frac{1}{n}\log r_n\left(f,X,\frac{e^{-nr}}{3}\right).
		\end{align*}
		By Remark \ref{rem 2.2}, one knows that the parameter has no effect on the value of $r$-neutralized topological entropy. Hence,
		\begin{align*}
			\limsup_{n\to\infty}\max_{\nu\in\mathcal M(X)}
			\left[-\frac{1}{n}\int_X
			\log\nu\bigl(B_n(x,e^{-nr})\bigr)\,\mathrm d\nu(x)\right]
			\leq\overline h_{top}^r(f,X).
		\end{align*}

		For the reverse inequality,
		let $E_n$ be an
		$(n,e^{-nr})$-separated subset of $X$ with the largest
		cardinality, and take
		\begin{align*}
			\nu_n=\frac{\sum_{x\in E_n}\delta_x}{\#E_n}.
		\end{align*}
		Consequently,
		\begin{align*}
			\max_{\nu\in\mathcal M(X)}
			-\frac{1}{n}\int_X
			\log\nu\bigl(B_n(x,e^{-nr})\bigr)\,
			\mathrm d\nu(x)
			&\geq
			-\frac{1}{n}\int_X
			\log\nu_n\bigl(B_n(x,e^{-nr})\bigr)\,
			\mathrm d\nu_n(x)\\
			&=\frac{1}{n}\log s_n(f,X,e^{-nr}).
		\end{align*}
		Taking the upper limit  yields
		\begin{align*}
			\overline h_{top}^r(f,X)
			\leq
			\limsup_{n\to\infty}
			\max_{\nu\in\mathcal M(X)}
			-\frac{1}{n}\int_X
			\log\nu\bigl(B_n(x,e^{-nr})\bigr)\,
			\mathrm d\nu(x).
		\end{align*}

		The variational principle for  $\underline h_{top}^r(f,X)$ is similar, we omit the proof here.
	\end{proof}

	\begin{remark}
		Different with $\overline{h}_\mu^{BK,r}(f)$ and $\underline{h}_\mu^{BK,r}(f)$, the integral cannot in general be moved outside the  limit of $n$
		in Theorem \ref{thm 1.1}. As the maximum of probability measure is chosen separately for each $n$, moving the integral outside the upper limit would require a single measure to capture all scales.
	\end{remark}

	\subsection{$r$-neutralized PS entropy and Localized   entropies }
	Motivated by Pfister and Sullivan \cite{PS07}, we consider the orbit segments whose empirical measures lie in a neighborhood of an invariant measure.

	Let $F\subset \mathcal M(X)$. For $x\in X$, we denote
	\begin{align*}
		\mathcal E_n(x)=\frac{1}{n}\sum_{j=0}^{n-1}\delta_{f^jx},
		\quad X_{n,F}=\{x\in X:\mathcal E_n(x)\in F\}.
	\end{align*}

	\begin{definition}
		\label{def 2.7}
		Fix $\mu\in \mathcal M_f(X)$ and $r>0$. We define the \emph{upper and lower $r$-neutralized Pfister-Sullivan (PS) entropies} by
		\begin{align*}
			\overline h_{\mu}^{PS,r}(f,X)
			&=\inf_{F\ni\mu}\limsup_{n\to\infty}\frac{1}{n}
			\log s_n(X_{n,F},e^{-nr}),\\
			\underline h_{\mu}^{PS,r}(f,X)
			&=\inf_{F\ni\mu}\liminf_{n\to\infty}\frac{1}{n}
			\log s_n(X_{n,F},e^{-nr}),
		\end{align*}
		where the infimum  taks over all open neighborhoods $F$ of $\mu$ in $\mathcal M(X)$.
	\end{definition}

	By monotonicity with respect to $F$, the value is unchanged if the
	infimum is taken either over all  open neighborhoods  or  closed neighborhoods $F$ satisfying
	$\mu\in\operatorname{int}F$.

	\begin{remark}
		The same quantities are obtained by using spanning sets.
		Fix a neighborhood $F$ of $\mu$ and choose an integer
		$q\geq1$ with $2e^{-qr}<1$. Since
		$\mathcal E_{n+q}(x)-\mathcal E_n(x)\to0$ uniformly in $x$
		in the weak$^*$ topology, there exists a smaller neighborhood
		$F_0$ of $\mu$ such that
		$X_{n,F_0}\subseteq X_{n+q,F}$ for all sufficiently large $n$.
		Consequently,
		\begin{align*}
			r_n(f,X_{n,F_0},e^{-nr})\leq 	s_n(f,X_{n,F_0},e^{-nr})
			\leq r_{n+q}(f,X_{n+q,F},e^{-(n+q)r}).
		\end{align*}
	\end{remark}

	Different with the entropy map $\mu\longmapsto h_\mu(f)$,	the PS entropy is upper semicontinuous.
	\begin{lemma}\label{lem 2.9}
		The map $\mathcal M_f(X)\ni\mu
		\longmapsto
		\overline h_\mu^{PS,r}(f,X)$
		is upper semicontinuous.
		Then
		\begin{align*}
			\sup_{\mu\in \mathcal M_f(X)}\overline h_\mu^{PS,r}(f,X)=\max_{\mu\in \mathcal M_f(X)}\overline h_\mu^{PS,r}(f,X)
		\end{align*}

	\end{lemma}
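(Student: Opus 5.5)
The plan is to use the standard argument that an infimum over neighborhoods is upper semicontinuous. Fix $\mu\in\mathcal M_f(X)$ and $\epsilon>0$. By Definition~\ref{def 2.7}, there is an open neighborhood $F$ of $\mu$ in $\mathcal M(X)$ with
\begin{align*}
\limsup_{n\to\infty}\frac1n\log s_n(X_{n,F},e^{-nr})<\overline h_\mu^{PS,r}(f,X)+\epsilon .
\end{align*}
(If $\overline h_\mu^{PS,r}(f,X)=+\infty$ there is nothing to prove, so assume it is finite.) Since $F$ is open in $\mathcal M(X)$, every $\mu'\in\mathcal M_f(X)\cap F$ has $F$ itself as an open neighborhood. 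The infimum defining $\overline h_{\mu'}^{PS,r}(f,X)$ is therefore bounded above by the value at $F$, giving
\begin{align*}
\overline h_{\mu'}^{PS,r}(f,X)\le \limsup_{n\to\infty}\frac1n\log s_n(X_{n,F},e^{-nr})<\overline h_\mu^{PS,r}(f,X)+\epsilon .
\end{align*}
The set $F\cap\mathcal M_f(X)$ is a relatively open neighborhood of $\mu$ in $\mathcal M_f(X)$, so $\limsup_{\mu'\to\mu}\overline h_{\mu'}^{PS,r}(f,X)\le \overline h_\mu^{PS,r}(f,X)+\epsilon$. Letting $\epsilon\to0$ gives upper semicontinuity.

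For the second assertion, $\mathcal M_f(X)$ is a nonempty compact subset of $\mathcal M(X)$ in the weak$^*$ topology, by the Krylov--Bogolyubov theorem and the closedness of invariance. An upper semicontinuous function with values in $[0,+\infty]$ attains its supremum on a compact set. Take $\mu_k$ with $\overline h_{\mu_k}^{PS,r}(f,X)\to\sup$ and pass to a convergent subsequence $\mu_k\to\mu^*$. Upper semicontinuity then gives $\overline h_{\mu^*}^{PS,r}(f,X)\ge\sup$. If the supremum is $+\infty$, the same argument shows that the value at $\mu^*$ is $+\infty$, so the maximum is again attained.

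I do not expect a serious obstacle. The only point needing care is that the neighborhoods in Definition~\ref{def 2.7} are taken in $\mathcal M(X)$ while the semicontinuity is asserted on $\mathcal M_f(X)$; the relative topology handles this directly. The other point is that the quantity is an infimum of functions each of which is constant on an open set, which is exactly what makes it upper semicontinuous, unlike $\mu\mapsto h_\mu(f)$.
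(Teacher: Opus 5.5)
Your proof is correct and follows essentially the same route as the paper: pick an open neighborhood $F$ of $\mu$ that nearly realizes the infimum, note that $F$ is also an open neighborhood of every invariant $\mu'\in F$, and then use compactness of $\mathcal M_f(X)$. The paper uses spanning numbers $r_k$ and an auxiliary neighborhood $V\subset F$, neither of which is essential, while your explicit treatment of the value $+\infty$ covers a point the paper leaves implicit.
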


	\begin{proof}
		Let $\mu\in \mathcal M_f(X)$ and
		$\overline h_\mu^{PS,r}(f,X)<a\in \mathbb R$. Then there exists an
		open neighborhood $F$ of $\mu$ in $\mathcal M(X)$ such that
		\begin{align*}
			\limsup_{k\to\infty}
			\frac1k\log r_k(f,X_{k,F},e^{-kr})<a.
		\end{align*}
		Choose an open neighborhood $V$ of $\mu$ such that $V\subset F$.
		For every $\lambda\in V\cap \mathcal M_f(X)$, $F$ is also an open
		neighborhood of $\lambda$. Hence
		\begin{align*}
			\overline h_\lambda^{PS,r}(f,X)
			\leq
			\limsup_{k\to\infty}
			\frac1k\log r_k(f,X_{k,F},e^{-kr})
			<a.
		\end{align*}
		Thus
		$\mu\mapsto\overline h_\mu^{PS,r}(f,X)$ is upper
		semicontinuous on $\mathcal M_f(X)$. The second result follows from that $M_f(X)$ is compact.
	\end{proof}

	In ergodic theory, there are several ways to characterize measure-theoretic entropy. In the following, we introduce a new  quantity from the  information theory  to characterize neutralized entropy. We first recall the notions of mutual information  in
	\cite{CT06,LT18}.

	Given a probability space $(\Omega,\mathbb{P})$. Let $X$ and $Y$ be measurable spaces, and $\xi:\Omega\to X$ and $\eta:\Omega\to Y$ measurable maps. The \emph{mutual information} $I(\xi,\eta)$ is defined as the supremum of
	\begin{equation*}
		\sum_{1\le m\le M,\,1\le n\le N}
		\mathbb{P}(\xi\in P_m,\eta\in Q_n)
		\log
		\frac{\mathbb{P}(\xi\in P_m,\eta\in Q_n)}
		{\mathbb{P}(\xi\in P_m)\mathbb{P}(\eta\in Q_n)},
	\end{equation*}
	where $\{P_1,\dots,P_M\}$ and $\{Q_1,\dots,Q_N\}$ are measurable partitions of $X$ and $Y$ respectively, with the convention that $0\log(0/a)=0$ for all $a\ge0$. If $\xi$ and $\eta$ have finite ranges, then  $I(\xi,\eta)$ is given by
	\begin{equation*}
		I(\xi,\eta)
		=
		H(\xi)-H(\xi\mid\eta)
		=
		H(\eta)-H(\eta\mid\xi)
		=
		H(\xi)+H(\eta)-H(\xi\vee\eta).
	\end{equation*}

	For $\nu\in \mathcal M(X)$, the \emph{$r$-neutralized rate-distortion function with fixed time and scale } is defined by
	\begin{align*}
		R_n(\nu,e^{-nr})
		=
		\inf
		\frac{1}{n}I(\xi,\eta),
	\end{align*}
	where the infimum is taken over all  random variables $\xi$ and $\eta=\{\eta_0,\dots,\eta_{n-1}\}$  defined on some probability space $(\Omega,\mathbb{P})$ such that
	\begin{itemize}
		\item $\xi$ takes values in $X$, and its law is given by $\nu$.
		\item Each $\eta_k$ takes values in $X$, and
		\begin{equation*}
			\mathbb{E}\left(\frac{1}{n}\sum_{k=0}^{n-1}d(f^k\xi,\eta_k)\right)<e^{-nr},
		\end{equation*}
	\end{itemize}

	Based on PS entropies \cite{PS07},
	we now localize the neutralized quantities  by using the   empirical measure.
	\begin{definition}
		For $\mu\in \mathcal M_f(X)$
		and $r>0$, we  define the \emph{localized $r$-neutralized rate-distortion quantity} and \emph{localized $r$-neutralized Brin-Katok entropy}  as
		\begin{align*}
			\mathcal R_\mu^r(f,X)
			=&
			\inf_{F\ni\mu}
			\limsup_{n\to\infty}
			\sup_{\substack{\nu\in \mathcal M(X)\\
					\nu(X_{n,F})=1}}
			R_n(\nu,e^{-nr}),\\
			\mathcal H_\mu^r(f,X)
			=&
			\inf_{F\ni\mu}
			\limsup_{n\to\infty}
			\sup_{\substack{\nu\in \mathcal M(X)\\
					\nu(X_{n,F})=1}}
			-\frac{1}{n} \int
			\log\nu(B_n(x,e^{-nr}))\,d\nu(x)
		\end{align*}
		where $F$ ranges over    neighborhoods of $\mu$ in
		$\mathcal M(X)$.
	\end{definition}

	We first give a commparision of
	these quantities.

	\begin{proposition}\label{prop 2.11}
		For any $r>0$ and $\nu\in \mathcal M_f(X)$, the three $r$-neutralized entropies are equal:
		$$\mathcal R_\mu^r(f,X)
		=\mathcal H_\mu^r(f,X)
		=\overline h_\mu^{PS,r}(f,X).$$
	\end{proposition}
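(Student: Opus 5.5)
The plan is to prove a cycle of inequalities
\[
\overline h_\mu^{PS,r}\le \mathcal R_\mu^r\le \mathcal H_\mu^r\le \overline h_\mu^{PS,r},
\]
working with a fixed neighborhood $F$ of $\mu$ and fixed $n$, up to harmless changes of scale $e^{-nr}\mapsto c\,e^{-nr}$ and time shifts $n\mapsto n+q$. By Remark~\ref{rem 2.2} and the spanning/separated remark after Definition~\ref{def 2.7}, such changes do not affect the exponential growth rates. In each step we also allow shrinking $F$ to a smaller neighborhood $F_0$.

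\textbf{Step 1: $\mathcal H_\mu^r\le\overline h_\mu^{PS,r}$.} This copies the first half of the proof of Theorem~\ref{thm 1.1}, restricted to $X_{n,F}$. Take a minimal cover of $X_{n,F}$ by balls $B_n(x_i,e^{-nr}/3)$ with $x_i\in X_{n,F}$, of cardinality at most $s_n(X_{n,F},e^{-nr}/3)$, and disjointify it into a partition $\mathcal P$ of $X_{n,F}$. Any $\nu$ with $\nu(X_{n,F})=1$ satisfies $\mathcal P(x)\subseteq B_n(x,e^{-nr})$, so $-\frac1n\int\log\nu(B_n(x,e^{-nr}))\,d\nu\le \frac1n H_\nu(\mathcal P)\le\frac1n\log\#\mathcal P$. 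Then take $\limsup_n$ and $\inf_F$, using the scale-change remark.

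\textbf{Step 2: $\mathcal R_\mu^r\le\mathcal H_\mu^r$ (up to scale).} Given $\nu$ supported on $X_{n,F}$, let $\xi\sim\nu$. Use the same partition $\mathcal P$ as in Step 1, but built at scale $e^{-nr}/2$, so that atoms have $d_n$-diameter below $e^{-nr}$. Set $\eta_k=f^k x_{\mathcal P(\xi)}$, where $x_P$ is a chosen point of $P$. Then the distortion condition holds, and $I(\xi,\eta)\le H(\eta)\le H_\nu(\mathcal P)$. This bounds $R_n$ by $\frac1n H_\nu(\mathcal P)$, which is not directly the Brin–Katok integral. I would therefore instead bound $R_n(\nu,e^{-nr})\le\frac1n\log s_n(X_{n,F},\cdot)$, proving $\mathcal R\le \overline h^{PS}$, and separately prove $\overline h^{PS}\le\mathcal H$ as in Step~4. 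Together with Step~1 this yields $\mathcal H=\overline h^{PS}$, leaving only the lower bound for $\mathcal R$.

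\textbf{Step 3: $\overline h_\mu^{PS,r}\le\mathcal R_\mu^r$.} This is the main obstacle. Take a maximal $(n,e^{-nr})$-separated set $E_n\subseteq X_{n,F_0}$ and $\nu_n$ uniform on $E_n$, so $\nu_n(X_{n,F_0})=1$. For any coupling $(\xi,\eta)$ with average distortion below $\delta e^{-nr}$ (scale $\delta$ small, absorbed by the scale remark), Markov's inequality gives that with probability at least $1/2$ we have $\frac1n\sum_k d(f^k\xi,\eta_k)<2\delta e^{-nr}$. The difficulty is that this is an averaged distortion, while separation is in the max metric $d_n$. The idea is to follow Lindenstrauss–Tsukamoto's mean-dimension argument: a fixed $\eta$-value $y$ can be $2\delta e^{-nr}$-close in the averaged metric to only a limited number of points of $E_n$. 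To get this, I would compare the averaged metric with the max metric on a slightly different time window and scale. Concretely, $\frac1n\sum d<\epsilon'$ forces $d(f^kx,y_k)<\sqrt{\epsilon'}$ off a set of at most $\sqrt{\epsilon'}\,n$ times. Counting the subsets of bad times costs $e^{nH(\sqrt{\epsilon'})}$, and on the bad times one uses the uniform Lipschitz-type bound coming from $f$ being a homeomorphism to cover by $(e^{nr})^{O(\sqrt{\epsilon'}n)}$ choices. This last count is the delicate point, since it could be superexponential, so I would choose $\epsilon'$ depending on $r$ so that the error is $o(1)$ relative to the scale, possibly needing the $\sup_\nu$ over $X_{n,F}$ to absorb it. Fano's inequality then gives $I(\xi,\eta)\ge \frac12\log\#E_n-\log(\text{max fiber})-\log2$.

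\textbf{Step 4: $\overline h^{PS}\le\mathcal H$.} Use $\nu_n$ uniform on $E_n$, exactly as in the second half of the proof of Theorem~\ref{thm 1.1}. Then $\nu_n(B_n(x,e^{-nr}))=1/\#E_n$, which gives the bound. Combining Steps 1–4 yields the three equalities.
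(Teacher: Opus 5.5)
\textbf{There is a genuine gap in Step 3, the lower bound $\overline h_\mu^{PS,r}\le\mathcal R_\mu^r$, which is the only nontrivial inequality.} Your Steps 1, 4 and the revised Step 2 are sound. They give $\mathcal H_\mu^r=\overline h_\mu^{PS,r}$ and $\mathcal R_\mu^r\le\overline h_\mu^{PS,r}$. The paper instead proves $\mathcal R_\mu^r\le\mathcal H_\mu^r$ directly, by drawing $Z$ from $\nu$ conditioned on $B_n(\xi,e^{-nr})$ and setting $\eta=(Z,fZ,\dots,f^{n-1}Z)$; your finite-code route is a fine substitute.

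Your plan needs a bound on how many points of $E_n$ can be close to a single $y$ in the averaged metric $\frac1n\sum_k d(f^kx,y_k)$, and no such bound is available. At a bad time you only know $d(f^kx,y_k)\lesssim \delta n e^{-nr}$. A homeomorphism of an arbitrary compact metric space has no Lipschitz constant, contrary to your ``Lipschitz-type bound coming from $f$ being a homeomorphism''. Moreover, $X$ need not have finite box dimension (Example~\ref{exam 2.5}). So nothing controls the number of $e^{-nr}$-distinguishable bad-time profiles by $e^{\kappa(\delta)n}$ with $\kappa(\delta)\to0$, and $\tfrac12\log\#E_n-\log(\text{max fiber})$ can be useless. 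The parameters as written also fail. With $\epsilon'\asymp e^{-nr}$, the good-time threshold $\sqrt{\epsilon'}\asymp e^{-nr/2}$ is far coarser than the separation scale $e^{-nr}$. And ``probability at least $1/2$'' in Fano costs a factor $\tfrac12$ of the entropy.

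\textbf{The missing idea is that no counting is needed, because the distortion scale is exponentially small.} The crude bound $\max_{j<n}d\le\sum_{j<n}d$ loses only a factor $n$, and you absorb it by evaluating the rate-distortion function at a slightly longer time.
\begin{itemize}
\item Take $m_n=n+\lceil\sqrt n\rceil$ and choose $F_0$ with $X_{n,F_0}\subseteq X_{m_n,F}$; this is possible since $(m_n-n)/m_n\to0$.
\item Let $\nu_n$ be uniform on a maximal $(n,e^{-nr})$-separated set $E_n\subseteq X_{n,F_0}$, so $\nu_n(X_{m_n,F})=1$.
\item For any admissible $(\xi,\eta)$ for $R_{m_n}(\nu_n,e^{-rm_n})$, Markov's inequality gives
\[
\mathbb P\Bigl(\sum_{j<n}d(f^j\xi,\eta_j)\ge\tfrac12e^{-nr}\Bigr)\le 2e^{nr}\,m_n e^{-rm_n}=2m_ne^{-r\lceil\sqrt n\rceil}\longrightarrow0 .
\]
\item Off this event, the nearest-neighbour decoder $\widehat\xi$ (minimizing $\max_{j<n}d(f^jx_k,\eta_j)$ over $E_n$) satisfies $d_n(\xi,\widehat\xi)<e^{-nr}$, hence $\widehat\xi=\xi$.
\item Data processing and Fano then give $I(\xi;\eta)\ge(1-P_e)\log\#E_n-\log2$ with $P_e\to0$. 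Since $m_n/n\to1$, this yields $\overline h_\mu^{PS,r}\le\mathcal R_\mu^r$.
\end{itemize}
A bounded time shift, which is what your constant factor $\delta$ amounts to, cannot do this, because the loss factor $n$ is unbounded. The shift must be sublinear but tend to infinity.
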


	\begin{proof}
		We divide the proof into two steps.
		\begin{step1}
			We first show
			$\mathcal R_\mu^r(f,X)
			\leq \mathcal H_\mu^r(f,X)
			\leq \overline h_\mu^{PS,r}(f,X).$
		\end{step1}

		The first inequality follows from that
		\begin{align}\label{equ 2.3}
			R_n(\nu,e^{-nr})\leq -\frac{1}{n}\int
			\log\nu\bigl(B_n(x,e^{-nr})\bigr)\,\mathrm d\nu(x)
		\end{align}
		holds for any $\nu\in \mathcal M(X)$. Indeed, suppose the right-hand is finite and non-zero. Let $\xi: \Omega\to X$ be a random variable  with
		$\operatorname{Law}(\xi)=\nu$. Given $\xi=x$, we can choose a
		random variable $Z$ with conditional distribution
		\begin{align*}
			\mathbb P(Z\in A\mid\xi=x)
			=
			\frac{\nu(A\cap B_n(x,e^{-nr}))}
			{\nu(B_n(x,e^{-nr}))}.
		\end{align*}
		Define $	\eta=(Z,fZ,\ldots,f^{n-1}Z).$
		For $\nu$-almost every $x$,
		$\mathbb P(Z\in B_n(x,e^{-nr})\mid\xi=x)=1$.
		Hence, the   random variables  $(\xi,\eta)$ satisfying
		$\mathbb E\left[
		\frac{1}{n}\sum_{j=0}^{n-1}d(f^j\xi,\eta_j)
		\right]
		<e^{-nr}$.
		For a finite Borel partition $\mathcal Q=\{Q_1,\ldots,Q_N\}$,
		write
		\begin{align*}
			p_j(x)
			=\mathbb P(Z\in Q_j\mid\xi=x)
			=\frac{\nu(Q_j\cap B_n(x,e^{-nr}))}
			{\nu(B_n(x,e^{-nr}))}.
		\end{align*}
		Whenever $p_j(x)>0$,
		\begin{align*}
			\log\frac{p_j(x)}{\nu(Q_j)}
			\leq-\log\nu(B_n(x,e^{-nr})).
		\end{align*}
		The log-sum inequality and the definition of mutual
		information show
		\begin{align*}
			I(\xi;\eta)=I(\xi;Z)
			\leq\int_X\sum_{j=1}^N
			p_j(x)\log\frac{p_j(x)}{\nu(Q_j)}\,\mathrm d\nu(x)
			\leq-\int_X\log\nu(B_n(x,e^{-nr}))\,\mathrm d\nu(x),
		\end{align*}
		by taking the supremum over $\mathcal Q$.
		This proves \eqref{equ 2.3}, and hence
		$\mathcal R_\mu^r(f,X)\leq\mathcal H_\mu^r(f,X)$.

		We next show
		\begin{align}\label{equ 2.4}
			\mathcal H_\mu^r(f,X)
			\leq \overline h_\mu^{PS,r}(f,X)
		\end{align}
		Let $F$ be a neighborhood of $\mu$ and
		choose $q\geq1$ such that $2e^{-qr}<1$. The there exists   a
		neighborhood $F_0\subset F$ of $\mu$ such that
		$X_{n,F_0}\subseteq X_{n+q,F}$ for all sufficiently large $n$ since
		\begin{align*}
			\left|
			\int_X\varphi\,\mathrm d\mathcal E_{n+q}(x)
			-\int_X\varphi\,\mathrm d\mathcal E_n(x)
			\right|
			\leq
			\frac{2q\|\varphi\|_\infty}{n+q}
			\longrightarrow0.
		\end{align*}
		Let $B_1,\ldots,B_N$ be $n+q$-th Bowen balls of
		radius $e^{-(n+q)r}$ covering $X_{n+q,F}$ with $N=r_{n+q}(f,X_{n+q,F},e^{-(n+q)r}).$
		Since $X_{n,F_0}\subseteq X_{n+q,F}$, they also cover
		$X_{n,F_0}$. By the approach of Theorem \ref{thm 1.1}, we can construct a  Borel partition
		$\mathcal P$ of $X_{n,F_0}$ with at most $N$ atoms. Since $2e^{-qr}<1$, each atom has $d_n$-diameter
		less than $2e^{-(n+q)r}<e^{-nr}$. Thus, for every $\nu$ satisfying
		$\nu(X_{n,F_0})=1$,
		\begin{align*}
			-\frac{1}{n}\int_X
			\log\nu\bigl(B_n(x,e^{-nr})\bigr)\,\mathrm d\nu(x)
			\leq\frac{1}{n} H_\nu(\mathcal P)
			\leq\frac{1}{n}\log r_{n+q}(f,X_{n+q,F},e^{-(n+q)r}).
		\end{align*}
		Since $F$ was arbitrary and $\frac{n+q}{n}\to1$, we obtain
		\eqref{equ 2.4}.

		\begin{step2}
			We then show
			$	\mathcal R_\mu^r(f,X)
			\geq \overline h_\mu^{PS,r}(f,X).$
		\end{step2}
		Fix  an  open
		neighborhood $F$ of $\mu$ and put $m_n=n+\lceil\sqrt n\rceil$
		for any $n\in \mathbb N$.
		There exists an open neighborhood $F_0$ of $\mu$ such
		that $X_{n,F_0}\subseteq X_{m_n,F}$
		for all sufficiently large $n$.
		Suppose that $X_{n,F_0}$ is nonempty. Choose a maximal
		$(n,e^{-nr})$-separated set $E_n=\{x_1,\cdots,x_N\}$ of $X_{n,F_0}$, which implies
		$	|E_n|
		\geq
		r_n(f,X_{n,F_0},e^{-nr}).$
		Let $\nu_n$ be the uniform probability measure on $E_n$. Then $\nu_n(X_{m_n,F})=1.$
		Let $\xi$ be uniformly distributed on $E_n$, and let
		$\eta=(\eta_0,\ldots,\eta_{m_n-1})\in X^{m_n}$
		be random variables satisfying
		\begin{align*}
			\mathbb E\left[
			\frac1{m_n}\sum_{j=0}^{m_n-1}
			d(f^j\xi,\eta_j)
			\right]
			<
			e^{-rm_n}.
		\end{align*}

		Define a new random variable $\widehat{\xi}:\Omega\to E_n$ with
		$\widehat\xi=x_k$, where $k$ is the smallest index for which
		$\max_{0\leq j<n}d(f^jx_k,\eta_j)$ is minimal among all
		$x_1,\ldots,x_N$.
		If $\sum_{j=0}^{n-1}d(f^j\xi,\eta_j)<\frac12e^{-nr}$, then
		the definition of $\widehat\xi$ gives
		\begin{align*}
			d_n(\xi,\widehat\xi)
			&\leq
			\max_{0\leq j<n}d(f^j\xi,\eta_j)
			+\max_{0\leq j<n}d(f^j\widehat\xi,\eta_j)
			<e^{-nr}.
		\end{align*}
		Thus $\widehat\xi=\xi$.  Denote by $	P_e=\mathbb P(\widehat\xi\neq\xi).$
		Then by
		Markov's inequality, we obatin
		\begin{align*}
			P_e
			\leq
			\mathbb P\left(
			\sum_{j=0}^{n-1}d(f^j\xi,\eta_j)
			\geq\frac{1}{2}e^{-nr}
			\right)
			\leq
			2e^{nr}\,
			\mathbb E\left[\sum_{j=0}^{n-1}d(f^j\xi,\eta_j)\right]
			\leq
			2m_ne^{-r(m_n-n)}
			=:q_n.
		\end{align*}
		where the right-hand tends to $0$ as	$n\to\infty$. Since   $\xi$ is uniformly distributed on $E_n$, by  the
		data-processing inequality and Fano's inequality \cite{LT18}, we obtain
		\begin{align*}
			I(\xi;\eta)
			\geq
			I(\xi;\widehat\xi)
			\geq
			(1-P_e)\log|E_n|-H(P_e)\geq
			(1-P_e)\log|E_n|-\log 2
		\end{align*}
		for all sufficiently large $n$. It implies
		\begin{align*}
			R_{m_n}(\nu_n,e^{-rm_n})
			\geq
			\frac{1-q_n}{m_n}
			\log r_n(f,X_{n,F_0},e^{-nr})
			-\frac{\log2}{m_n}.
		\end{align*}
		Moreover, $\nu_n(X_{m_n,F})=1$. Therefore
		\begin{align*}
			\limsup_{n\to\infty}\sup_{\nu(X_{m_n,F}=1)}	R_{m_n}(\nu,e^{-rm_n})
			\geq \limsup_{n\to\infty}
			\left(\frac{1-q_n}{m_n}
			\log r_n(f,X_{n,F_0},e^{-nr})
			-\frac{\log2}{m_n}\right),
		\end{align*}
		which shows $\mathcal R_\mu^r(f,X)
		\geq \overline h_\mu^{PS,r}(f,X)$.
	\end{proof}

	We can also establish the relation between $r$-neutralized local entropy and PS entropy.

	\begin{proposition}\label{prop 2.12}
		For every $\mu\in \mathcal E_f(X)$,
		$	\underline h_\mu^{BK,r}(f) \leq
		\underline h_\mu^{PS,r}(f,X).$
	\end{proposition}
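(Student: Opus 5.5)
Fix $\mu\in\mathcal E_f(X)$ and write $\underline h(x)=\liminf_{n\to\infty}-\frac1n\log\mu(B_n(x,e^{-nr}))$. The map $x\mapsto \mu(B_n(x,e^{-nr}))$ is lower semicontinuous, because $B_n(x,e^{-nr})$ is open, so $\underline h$ is Borel measurable. A further point is that $B_n(fx,e^{-nr})$ is not comparable to $f B_{n+1}(x,e^{-(n+1)r})$ in the obvious way, since the radius shrinks. So invariance of $\underline h$ is not obvious, and I will avoid needing it. Assume $\underline h^{BK,r}_\mu(f)>0$; otherwise the statement is trivial. Take any $0<s<\underline h^{BK,r}_\mu(f)$ with $s$ below the essential range where needed; concretely, pick $s$ and $\delta>0$ such that $A:=\{x:\underline h(x)>s\}$ has $\mu(A)\ge\delta$. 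Since $\int\underline h\,d\mu>s'$, we cannot directly get $\underline h>s'$ a.e. without ergodic invariance. I will therefore aim for the bound $\underline h^{PS,r}_\mu\ge \sup_s s$ with $\mu(\underline h>s)>0$, which is the essential supremum and dominates the integral. That is exactly enough.

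Fix an open neighborhood $F$ of $\mu$. By Egorov-type uniformization there are $N_0$ and a set $A_0\subset A$ with $\mu(A_0)>\delta/2$ such that $\mu(B_n(x,e^{-nr}))\le e^{-ns}$ for all $x\in A_0$ and $n\ge N_0$. By the ergodic theorem, applied via a countable dense family of continuous functions, $\mu(X_{n,F})\to1$. Hence $\mu(A_0\cap X_{n,F})>\delta/4$ for large $n$.

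Now let $E$ be a maximal $(n,e^{-nr})$-separated subset of $A_0\cap X_{n,F}$. Then the balls $B_n(y,e^{-nr})$ for $y\in E$ cover $A_0\cap X_{n,F}$. The issue is that each ball is centered at $y\in A_0$ and has measure at most $e^{-ns}$, so
\[
\delta/4<\mu(A_0\cap X_{n,F})\le \# E\cdot e^{-ns}.
\]
Since $E\subset X_{n,F}$ is separated, this gives $s_n(X_{n,F},e^{-nr})\ge \#E\ge \frac\delta4 e^{ns}$ for all large $n$. Hence
\[
\liminf_{n\to\infty}\frac1n\log s_n(X_{n,F},e^{-nr})\ge s,
\]
and taking the infimum over $F$ yields $\underline h^{PS,r}_\mu(f,X)\ge s$.

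Finally, let $s\uparrow\operatorname{ess\,sup}\underline h\ge\int\underline h\,d\mu=\underline h^{BK,r}_\mu(f)$. If the integral is $+\infty$, then the essential supremum is infinite and the same argument applies for every $s$. The main obstacle I expect is the covering step, since the balls must be centered at points of $A_0$ to use the measure bound. Choosing $E$ inside $A_0\cap X_{n,F}$ and using maximality handles this without halving radii. Ergodicity enters only through $\mu(X_{n,F})\to1$.
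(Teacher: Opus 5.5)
Your proposal is correct and follows essentially the paper's route: a positive-measure set on which $\mu(B_n(x,e^{-nr}))\le e^{-ns}$ holds uniformly for large $n$, the ergodic theorem to get $\mu(X_{n,F})\to1$, and a covering count. The only difference is in the counting step. You take a maximal $(n,e^{-nr})$-separated subset of $A_0\cap X_{n,F}$, so the covering balls are automatically centred in the good set and bound $s_n$ directly; the paper instead covers $X_{m,F}$ at time $m=n+q$, re-centres the balls at points of $G$, and absorbs the doubled radius via $2e^{-qr}<1$.
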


	\begin{proof}
	Suppose $\underline h_\mu^{BK,r}(f)>0$ and fix
	$0<a<\underline h_\mu^{BK,r}(f)$. Then for some $N\geq1$, the set
	\begin{align}\label{equ 2.5}
		G=\left\{x\in X:
		\mu(B_n(x,e^{-nr}))\leq e^{-an}
		\text{ for every }n\geq N\right\}
	\end{align}
	satisfies $\mu(G)=c>0$.
		Choose a positive integer $q$ such that $2e^{-qr}<1$ and put
		$m=n+q$. For every neighborhood $F$ of $\mu$, the ergodic theorem
		shows that
		$\mu(G\cap X_{m,F})>\frac{c}{2}$
		for all sufficiently large $n$.
		Let $\{B_m(x_i,e^{-mr})\}_{i=1}^K$ be a cover of $X_{m,F}$. For every ball intersecting $G\cap X_{m,F}$, choose
		$	y_i\in B_m(x_i,e^{-mr})\cap G\cap X_{m,F}.$
		By the triangle inequality,
		\begin{align*}
			B_m(x_i,e^{-mr})\cap(G\cap X_{m,F})
			\subseteq
			B_m(y_i,2e^{-mr})\subseteq
			B_m(y_i,e^{-nr}).
		\end{align*}
		Then
		by \eqref{equ 2.5}, one has
		$\frac{c}{2}
		<
		\mu(G\cap X_{m,F})
		\leq
		K e^{-an},$
		and hence $r_m(f,X_{m,F},e^{-mr})
		\geq
		\frac{c}{2}e^{an}.$
		Finally,
		$\underline h_\mu^{PS,r}(f,X)\geq a.$
		Letting $a\to\underline h_\mu^{BK,r}(f)$ completes the proof.
	\end{proof}

	\begin{remark}
		\begin{itemize}
			\item One can also show that
			$\overline h_\mu^{BK,r}(f) \leq
			\overline h_\mu^{PS,r}(f,X)$ for any $\mu\in \mathcal E_f(X)$.

			\item As shown in Example~\ref{exam 4.5}, the ergodicity in
			Proposition~\ref{prop 2.12}\textup{(2)} is essential.
		\end{itemize}
	\end{remark}

	\subsection{Variational principles  for the upper
		$r$-neutralized entropy}
	We first observe the following fact.
	\begin{lemma}\label{lem 2.14}
		If $U$ is an open neighborhood of $\mathcal M_f(X)$ in $\mathcal M(X)$, then
		$\mathcal E_n(X)\subseteq U$ for all sufficiently large $n$.
	\end{lemma}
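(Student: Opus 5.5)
The plan is a compactness argument in $\mathcal M(X)$, using the fact that empirical measures are almost invariant. Here $\mathcal E_n(X)=\{\mathcal E_n(x):x\in X\}$.

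\textbf{Step 1 (almost invariance).} First I record the standard estimate. For every $\varphi\in C(X)$ and $x\in X$,
\begin{align*}
\left|\int\varphi\circ f\,\mathrm d\mathcal E_n(x)-\int\varphi\,\mathrm d\mathcal E_n(x)\right|
=\frac1n\left|\varphi(f^nx)-\varphi(x)\right|
\leq\frac{2\|\varphi\|_\infty}{n}.
\end{align*}
This bound is uniform in $x$.

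\textbf{Step 2 (contradiction setup).} Suppose the statement fails. Then there are $n_k\to\infty$ and points $x_k\in X$ with $\mathcal E_{n_k}(x_k)\notin U$. The set $\mathcal M(X)\setminus U$ is closed in the compact metrizable space $\mathcal M(X)$, hence compact. Passing to a subsequence, we may assume $\mathcal E_{n_k}(x_k)\to\nu$ for some $\nu\in\mathcal M(X)\setminus U$.

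\textbf{Step 3 (the limit is invariant).} Since $f$ is continuous, $\varphi\circ f\in C(X)$ for each $\varphi\in C(X)$. Letting $k\to\infty$ in the estimate of Step 1 with $n=n_k$ gives $\int\varphi\circ f\,\mathrm d\nu=\int\varphi\,\mathrm d\nu$ for all $\varphi\in C(X)$. Hence $\nu\in\mathcal M_f(X)\subseteq U$, which contradicts $\nu\notin U$.

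The only delicate point is that the convergence in Step 1 must be uniform in $x$. This is exactly what the bound by $2\|\varphi\|_\infty/n$ provides, and it is what lets us pass to the limit along an arbitrary sequence of points $x_k$. Everything else is routine weak$^*$ compactness.

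As an alternative, one could fix a metric on $\mathcal M(X)$ built from a countable dense family $\{\varphi_i\}$ of continuous functions. Using Step 1 one shows that $\mathcal E_n(X)$ lies in the $\delta$-neighborhood of $\mathcal M_f(X)$ once $n$ is large. The conclusion then follows because $\operatorname{dist}(\cdot,\mathcal M(X)\setminus U)$ has a positive minimum on the compact set $\mathcal M_f(X)$. I would present the contradiction argument, since it is shorter.
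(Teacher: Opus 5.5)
Your proof is correct and follows essentially the same route as the paper: the uniform telescoping estimate $\frac{1}{n}\lvert\varphi(f^nx)-\varphi(x)\rvert$, then a contradiction sequence $\mathcal E_{n_k}(x_k)\notin U$, a weak$^*$ limit point, which is $f$-invariant and lies in the closed set $\mathcal M(X)\setminus U$. You also state explicitly why the limit stays outside $U$, namely compactness of $\mathcal M(X)\setminus U$, a point the paper's write-up only sketches.
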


	\begin{proof}
		For every $\varphi\in C(X)$, one has
		\begin{align*}
			\int\varphi\circ f\,\mathrm d\mathcal E_n(x)-\int\varphi\,\mathrm d\mathcal E_n(x)
			=\frac{\varphi(f^nx)-\varphi(x)}n,
		\end{align*}
		which tends to zero uniformly in $x$.  Suppose the conclusion fails, then there  exist $n_k\to\infty$ and $x_k\in X$ such that $\mathcal E_{n_k}(x_k)\notin U$. Passing to a subsequence, assume that $\mathcal E_{n_k}(x_k)\to\mu$. The displayed identity implies that $\mu$ is $f$-invariant. Since  $\mathcal M(X)\setminus U$ gives $\mu\notin U$, this makes a contradiction.
	\end{proof}

	We proceed to prove Theorem~\ref{thm 1.2}.

	\begin{proof}[Proof of Theorem~\ref{thm 1.2}]
		By Proposition~\ref{prop 2.11} and Lemma \ref{lem 2.9}, we only need to prove the variational principle:
		\begin{align*}
			\max_{\mu\in \mathcal M_f(X)}
			\overline h_\mu^{PS,r}(f,X)
			=
			\overline h_{top}^r(f,X).
		\end{align*}
		Since $X_{n,F}\subset X$, we only need to prove the upper bound.
		Suppose   that for each $\mu\in \mathcal M_f(X)$, $\overline h_\mu^{PS,r}(f,X)
		<
		\overline h_{top}^r(f,X).$
		Then there exists  $c\in\mathbb R$ such that
		\begin{align}\label{equ 2.6}
			\max_{\mu\in \mathcal M_f(X)}
			\overline h_\mu^{PS,r}(f,X)
			<c<
			\overline h_{top}^r(f,X).
		\end{align}
		For each $\mu\in \mathcal M_f(X)$, one can find an open neighborhood
		$F_\mu$ of $\mu$ such that
		\begin{align*}
			\limsup_{n\to\infty}
			\frac{1}{n}
			\log r_n(f,X_{n,F_\mu},e^{-nr})
			<c.
		\end{align*}
		By the compactness of $\mathcal M_f(X)$, there exist
		$\mu_1,\ldots,\mu_m\in \mathcal M_f(X)$ such that
		\begin{align*}
			\mathcal M_f(X)
			\subset
			\bigcup_{i=1}^mF_{\mu_i}
			=:U.
		\end{align*}
		By Lemma~\ref{lem 2.14}, for all sufficiently large $n$,
		$\mathcal E_n(X)\subset U.$
		Equivalently, $	X
		=
		\bigcup_{i=1}^mX_{n,F_{\mu_i}}.$
		Consequently,
		\begin{align*}
			r_n(f,X,e^{-nr})
			\leq
			\sum_{i=1}^m
			r_n(f,X_{n,F_{\mu_i}},e^{-nr}).
		\end{align*}
		It follows that
		\begin{align*}
			\overline h_{top}^r(f,X)
			\leq
			\max_{1\leq i\leq m}
			\limsup_{n\to\infty}
			\frac{1}{n}
			\log r_n(f,X_{n,F_{\mu_i}},e^{-nr})<c,
		\end{align*}
		which contradicts with \eqref{equ 2.6}. Therefore,
		\begin{align*}
			\overline h_{top}^r(f,X)
			=
			\max_{\mu\in \mathcal M_f(X)}
			\overline h_\mu^{PS,r}(f,X)
			=
			\max_{\mu\in \mathcal M_f(X)}
			\mathcal R_\mu^r(f,X).
		\end{align*}
	\end{proof}

	\begin{remark}
		Following the work of \cite{YCZ25}, we consider the family
		\begin{align*}
			\mathcal D
			=
			\left\{
			h_\nu^K(f,e^{-nr},\delta),\;
			h_\nu^P(f,e^{-nr}),\;
			h_\nu^S(f,\nu,e^{-nr},\delta),R_{n}(\nu,e^{-nr}),
			R_{n,\infty}(\nu,e^{-nr})
			\right\},
		\end{align*}
		where they denote the $r$-neutralized Katok entropy, partition entropy,  open cover quantities and modified rate-distortion quantity with the finite time $n$ and scale $e^{-nr}$,
		respectively. For each $Q\in \mathcal D$, the variational principles in
	 Theorem~\ref{thm 1.1} and Theorem \ref{thm 1.2} also hold.
		Moreover, its empirical-measure localization of $Q$  provides alternative
		characterizations of $r$-neutralized PS entropy.
		We will give their precise definitions and proofs
		in a separate paper.
	\end{remark}

	The maximum and the limit $r\to0$ can be interchanged.

	\begin{corollary}\label{cor 2.15}
		Let $(X,d)$ be a compact metric space and $f$ be a homeomorphism. Then
		\begin{align*}
			\lim_{r\to0}\overline h_{top}^r(f,X)
			=
			\lim_{r\to0}
			\max_{\mu\in \mathcal M_f(X)}
			\overline h_\mu^{PS,r}(f,X)
			=
			\max_{\mu\in \mathcal M_f(X)}
			\lim_{r\to0}
			\overline h_\mu^{PS,r}(f,X).
		\end{align*}
		The same conclusion holds with
		$\overline h_\mu^{PS,r}(f,X)$ replaced by
		$\mathcal R_\mu^r(f,X)$.
	\end{corollary}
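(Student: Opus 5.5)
The first equality is immediate from Theorem~\ref{thm 1.2}, applied for each fixed $r>0$, so the work lies in the second equality, the interchange of $\lim_{r\to0}$ and $\max_\mu$. First we check that all limits exist. For $r'<r$ one has $e^{-nr'}>e^{-nr}$, so every $(n,e^{-nr'})$-separated set is $(n,e^{-nr})$-separated. Hence $s_n(X_{n,F},e^{-nr'})\le s_n(X_{n,F},e^{-nr})$, and both $r\mapsto\overline h_\mu^{PS,r}(f,X)$ and $r\mapsto\overline h_{top}^r(f,X)$ are nondecreasing in $r$. The limits as $r\to0$ are therefore infima over $r>0$ (values in $[0,\infty]$). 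Write $g_r(\mu)=\overline h_\mu^{PS,r}(f,X)$ and $g(\mu)=\lim_{r\to0}g_r(\mu)=\inf_{r>0}g_r(\mu)$.

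The inequality $\max_\mu g(\mu)\le\lim_{r\to0}\max_\mu g_r(\mu)$ is trivial, because $g(\mu)\le g_r(\mu)\le\max g_r$ for every $r$. We also need $\max_\mu g$ to be attained. Each $g_r$ is upper semicontinuous by Lemma~\ref{lem 2.9}, and an infimum of upper semicontinuous functions is upper semicontinuous. So $g$ is upper semicontinuous on the compact set $\mathcal M_f(X)$ and attains its maximum.

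For the reverse inequality we use a Dini-type compactness argument. Suppose $c$ is real with $c>\max_\mu g(\mu)$; if the maximum is $+\infty$ there is nothing to prove. For each $r$, choose $\mu_r$ attaining $\max g_r$ (Lemma~\ref{lem 2.9}). Take $r_k\downarrow0$ and pass to a subsequence with $\mu_{r_k}\to\mu^*$. Since $g(\mu^*)<c$, there is some $r_0$ with $g_{r_0}(\mu^*)<c$. By upper semicontinuity of $g_{r_0}$, there is a neighborhood $V$ of $\mu^*$ on which $g_{r_0}<c$. For $k$ large we have $r_k<r_0$ and $\mu_{r_k}\in V$, so monotonicity gives
\[
\max g_{r_k}=g_{r_k}(\mu_{r_k})\le g_{r_0}(\mu_{r_k})<c .
\]
Since $\max g_r$ is monotone in $r$, this yields $\lim_{r\to0}\max g_r\le c$. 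Letting $c$ decrease to $\max g$ finishes the argument.

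The statement for $\mathcal R_\mu^r(f,X)$ follows because Proposition~\ref{prop 2.11} gives $\mathcal R_\mu^r=\overline h_\mu^{PS,r}$ pointwise for every $r$. The only delicate step is this interchange, and it depends exactly on the two facts used above: monotonicity in $r$ and upper semicontinuity of each $g_r$ from Lemma~\ref{lem 2.9}. Both are in hand, so we expect no real obstacle.
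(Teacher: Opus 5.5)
Your proof is correct and is essentially the paper's argument. Both use monotonicity in $r$, the upper semicontinuity of each $\overline h_\mu^{PS,r}(f,X)$ from Lemma~\ref{lem 2.9}, maximizers $\mu_{r_k}$ with a convergent subsequence $\mu_{r_k}\to\mu^*$, and upper semicontinuity of a fixed $g_{r_0}$ at $\mu^*$ together with $g_{r_k}\le g_{r_0}$ for $r_k<r_0$. The differences are cosmetic: you argue through an arbitrary $c>\max_\mu g$ instead of showing $L\le g(\mu_*)$ directly, and you check separately that $g=\inf_r g_r$ attains its maximum, which the paper gets for free from $L\le g(\mu_*)$.
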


	\begin{proof}
		By monotonicity of $r$, we only need to prove
		\begin{align*}
			\lim_{r\to0}
			\max_{\mu\in \mathcal M_f(X)}
			\overline h_\mu^{PS,r}(f,X)
			\leq
			\max_{\mu\in \mathcal M_f(X)}
			\lim_{r\to0}
			\overline h_\mu^{PS,r}(f,X).
		\end{align*}
		Let $\{r_k\}$ be a sequence decreasing to $0$, and set
		$h_k(\mu)=\overline h_\mu^{PS,r_k}(f,X)$.
		By Theorem~\ref{thm 1.2}, each $h_k$ is upper semicontinuous
		and attains its maximum on $\mathcal M_f(X)$. Choose $\mu_k$ such that
		\begin{align*}
			h_k(\mu_k)=\max_{\mu\in \mathcal M_f(X)}h_k(\mu).
		\end{align*}
		Then we can assume  $\mu_k\to\mu_*$ by passing to a subsequence (still denoted by $\{k\}_{k=1}^{\infty}$).
		Since $h_k$ decreases pointwise, denote by
		$L:=\lim_{k\to\infty}\max_\mu h_k(\mu)$.

		For every fixed $j$ and all $k\geq j$, one has $h_j\geq h_k$.
		By the upper semicontinuity, we have
		\begin{align*}
			h_j(\mu_*)
			\geq
			\limsup_{k\to\infty}h_j(\mu_k)
			\geq
			\lim_{k\to\infty}h_k(\mu_k)
			=L.
		\end{align*}
		Hence $L
		\leq
		\inf_{j\geq1}h_j(\mu_*)
		=
		\lim_{r\to0}
		\overline h_{\mu_*}^{PS,r}(f,X),$
		which proves the required inequality.
	\end{proof}

	It is natural to consider whether the corresponding variational principle holds for the lower $r$-neutralized entropies. The following example shows that it fails in general

	Consider the integer sequence $\{N_j\}$ with $N_1=1$ and $N_{j+1}=2^{N_j}$.  Set $I_j=\{N_j,\ldots,N_{j+1}-1\}$. Define the following set:
	\begin{align*}
		A&=\bigcup_{\substack{j\geq1\\j\ \mathrm{odd}}}I_j,
		\quad
		B=\bigcup_{\substack{j\geq1\\j\ \mathrm{even}}}I_j,
	\end{align*}
	and for $C=A,B$, define $	K_C=\{0\}\cup\left\{\frac{1}{n}:n\in C\right\}.$

	\begin{example}\label{exam 2.16}
		Let $\mathbb S=\mathbb R/8\mathbb Z$ and
		$\mathbb T=\mathbb R/\mathbb Z$,  equipped with their
		geodesic metrics. Fix an irrational $\alpha\in\mathbb T$ and take $f(u,v)=(u,v+\alpha)$. Put
		\begin{align*}
			X=\bigl(K_A\cup(2+K_B)\bigr)\times\mathbb T
			.
		\end{align*}
		Then $(X,f)$ is a compact invariant subsystem of the
		$C^\infty$ diffeomorphism
		of $\mathbb S\times\mathbb T$.
		For every $r>0$, we have
		\begin{align*}
			\max_{\mu\in\mathcal M_f(X)}
			\underline h_\mu^{PS,r}(f,X)
			=r
			<
			\underline h_{top}^r(f,X)
			=\frac{3r}{2}.
		\end{align*}
		Moreover, if $m_{\mathbb T}$ is the Haar measure on $\mathbb T$, for
		the ergodic measure $\mu_0=\delta_0\otimes m_{\mathbb T}$, we have
		\begin{align*}
			\underline h_{\mu_0}^{PS,r}(f,X)
			=r
			<
			\overline h_{\mu_0}^{PS,r}(f,X)
			=\frac{3r}{2}.
		\end{align*}
	\end{example}

	\begin{proof}
		We equip $X$ with the maximum product metric. Then $f$ is an isometry and thus by \eqref{equ 2.2}, one has
		\begin{align}\label{equ 2.7}
			\overline h_{top}^r(f,X)= r \overline{\dim}_B(X,d),\quad \underline h_{top}^r(f,X)= r \underline{\dim}_B(X,d)
		\end{align}

		We first consider $K_A$, which is a subset of $\{0\}\cup\left\{\frac{1}{n}:n\in N\right\}$. Then $\overline{\dim}_B(K_A)\leq \frac{1}{2}$. On the one hand,
		if $I_j\subset A$, the points $\{\frac{1}{k}:N_j\le k<2N_j\}$ are $\frac{1}{4N_j^2}$-separated, hence $	N(K_A,(4N_j^2)^{-1})\ge N_j.$ It follows $\overline{\dim}_B K_A=\frac{1}{2}.$
		On the other hand,
		if $I_j\cap A=\emptyset$, then one can show
		$	N(K_A,N_{j+1}^{-1})\le N_j+1.$ Thus $\underline{\dim}_B K_A=0.$
		The same dimension estimates hold for $K_B$.
		Since the box dimension of $K_A\cup K_B=\{0\}\cup\{1/n:n\in\mathbb N\}$ equals $\frac{1}{2}$, one has
		\begin{align*}
			\underline{\dim}_B(K_A\cup(2+K_B))
			=\overline{\dim}_B(K_A\cup(2+K_B))
			=\frac{1}{2}.
		\end{align*}
		By the product inequalities \cite{Pes97}, we have $\underline{\dim}_B X
		=\overline{\dim}_B X
		=\frac{3}{2}$.
		Hence by \eqref{equ 2.7},  $$\underline h_{top}^r(f,X)
		=\overline h_{top}^r(f,X)=\frac{3r}{2}.$$

		Every invariant measure on $X$ is of the form
		$\mu=\rho\otimes m_{\mathbb T}$, where
		$\rho\in\mathcal M(K_A\cup(2+K_B))$. Then $\mathcal E_n(u,v)$   converges uniformly to $\delta_u\otimes m_{\mathbb T}$ on $X$.
		If $\rho$ is not Dirac, then one can choose an open neighborhood $F\ni\mu$ whose closure is disjoint from
		$\{\delta_u\otimes m_{\mathbb T}:u\in K_A\cup(2+K_B)\}$.
		Then $\underline h_\mu^{PS,r}(f,X)=0$.

		Suppose $\mu=\delta_u\otimes m_{\mathbb T}$.
		For $u\in K_A$, we take the neighborhood
		$$F=\{\nu:\nu(K_A\times\mathbb T)>\frac{1}{2}\}.$$
		Then $X_{n,F}=K_A\times\mathbb T$  and thus $
		\underline h_\mu^{PS,r}(f,X)\le r\underline{\dim}_B(K_A\times\mathbb T)=r.$
		For any neighborhood $F\ni\mu$, by the uniform convergence, one has
		$\{u\}\times\mathbb T\subset X_{n,F}$ and hence
		$\underline h_\mu^{PS,r}(f,X)\geq r$.
		Thus $\underline h_\mu^{PS,r}(f,X)=r$.
		The argument also holds for $u\in2+K_B$. Therefore
		$$
		\max_{\mu\in\mathcal M_f(X)}
		\underline h_\mu^{PS,r}(f,X)=r<\underline h_{top}^r(f,X).
		$$

		For the ergodic measure $\mu_0=\delta_0\otimes m_{\mathbb T}$, we are left to show $\overline h_{top}^r(f,X)=\frac{3r}{2}$.
		Fix  an open neighborhood $F\ni\mu_0$. Then there exists  $\gamma>0$ such that
		$(K_A\cap[0,\gamma])\times\mathbb T
		\subset X_{n,F}$
		for all large $n$.
		Since $K_A\setminus[0,\gamma]$ is finite, by \cite{Pes97}, one can show
		$$
		\limsup_{n\to\infty}\frac1n
		\log s_n(f,X_{n,F},e^{-nr})
		\ge r\overline{\dim}_B
		\bigl((K_A\cap[0,\gamma])\times\mathbb T\bigr)
		=\frac{3r}{2}.
		$$
		Hence, we have $\underline h_{\mu_0}^{PS,r}(f,X)
		=r
		<
		\overline h_{\mu_0}^{PS,r}(f,X)
		=\frac{3r}{2},$ which finishes the proof.
	\end{proof}

	\section{Dimension estimates for $r$-neutralized Pfister--Sullivan entropy on the whole manifold}
	\label{sec 3}
	We first give the estimates of the $r$-neutralized PS entropies on compact invariant sets and then on the whole
	manifold and then obtain exact formula  for $C^\infty$ and Anosov diffeomorphisms.

	\subsection{Upper bounds}

	Throughout this section, let $f:M\to M$ be a $C^1$
	diffeomorphism of a closed Riemannian $m$-manifold $M$ (compact and without
	boundary), equipped with its Riemannian distance $d$.

	Let $\mu\in\mathcal M_f(M)$.
	A point $x\in M$ is \emph{regular} if its tangent space admits
	a splitting $T_xM=E_1(x)\oplus\cdots\oplus E_{\ell(x)}(x)$ and
	distinct numbers $\lambda_1(x)>\cdots>\lambda_{\ell(x)}(x)$
	such that, for every $0\neq v\in E_i(x)$,
	\begin{align*}
		\lim_{n \to \pm\infty} \frac{1}{n} \log \| Df_x^n v \| = \lambda_i(x).
	\end{align*}
	The numbers $\lambda_i(x)$ are the Lyapunov exponents at $x$, with
	multiplicities $d_i(x)=\dim E_i(x)$. Denote by $\Gamma\subset M$ the set of regular points. Then
	by the multiplicative ergodic
	theorem \cite{Via14}, $\Gamma$ has full $\mu$-measure, and
	these exponents and multiplicities are measurable and $f$-invariant. If $\mu$ is ergodic, the exponents and their multiplicities are
	constant $\mu$-a.e.
	At each regular point $x\in \Gamma$, denote by $E^s(x)$, $E^c(x)$ and $E^u(x)$  the
	sums of the spaces $E_i(x)$ with negative, zero and positive
	exponents, respectively. Then
	\begin{align*}
		T_xM=E^s(x)\oplus E^c(x)\oplus E^u(x).
	\end{align*}

	Pesin and Ruelle studied the relation between measure-theoretic entropy and Lyapunov exponents.
	\begin{itemize}
		\item \textup{Ruelle's inequality \cite{Rue78}:}
		$	h_\mu(f)\leq
		\min\{\mathcal X^+(f,\mu),\mathcal X^-(f,\mu)\}.$

		\item \textup{Pesin's entropy formula \cite{LS82,Pes77}:}
		if $f$ is $C^{1+\alpha}$ and $\mu$ is an SRB measure, then
		\begin{align*}
			h_\mu(f)=\mathcal X^+(f,\mu)
			=\min\{\mathcal X^+(f,\mu),\mathcal X^-(f,\mu)\}.
		\end{align*}
	\end{itemize}
	If we restrict $\mu$ to be an invariant measure of the compact $f$-invariant set $\Lambda$, the above results also hold.

	We introduce the  notations for exterior powers and singular values
	\cite{Via14,BP07}. For $q\geq1$ and $0\leq k\leq m$, the
	derivative $D_xf^q$ induces the linear map
	$	\wedge^kD_xf^q:
	\wedge^kT_xM
	\longrightarrow
	\wedge^kT_{f^qx}M.$
	Its norm $\|\wedge^kD_xf^q\|$ represents the maximal expansion
	factor of $k$-dimensional volume under $D_xf^q$. We use the
	convention $\|\wedge^0D_xf^q\|=1.$
	Let $\sigma_1(x)\geq\cdots\geq\sigma_m(x)>0$
	be the singular values of $D_xf^q$. Then for $1\leq k\leq m,$ $	\left\|\wedge^kD_xf^q\right\|
	=
	\sigma_1(x)\cdots\sigma_k(x).$
	Define the logarithm of the maximal volume expansion of $D_xf^q$
	among all dimensions by
	\begin{align*}
		\Psi_q(x)
		:=
		\log\max_{0\leq k\leq m}
		\left\|\wedge^kD_xf^q\right\|
		=
		\sum_{i=1}^m\max\{\log\sigma_i(x),0\}.
	\end{align*}

	We first establish the upper bound of $r$-neutralized PS entropy in terms of positive
	Lyapunov exponents.
	\begin{lemma}\label{lem 3.1}
		Let $f$ be a $C^1$ diffeomorphism of a closed Riemannian
		$m$-manifold. Then for every $\mu\in \mathcal M_f(M)$ and every $r>0$,
		\begin{align*}
			\overline h_\mu^{PS,r}(f,M)
			\leq mr+\mathcal X^+(f,\mu).
		\end{align*}
	\end{lemma}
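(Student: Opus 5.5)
Fix $\epsilon>0$. We cover $X_{n,F}$ for a suitable neighborhood $F$ of $\mu$ by $(n,e^{-nr})$-Bowen balls, with at most $\exp\bigl(n(mr+\mathcal X^+(f,\mu)+O(\epsilon))\bigr)$ balls.

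\textbf{Step 1: the observable.} Fix $q\geq1$ large. By subadditivity of $\log\|\wedge^kD_xf^q\|$ and Kingman's theorem,
\begin{align*}
\frac1q\int\Psi_q\,d\mu\leq\mathcal X^+(f,\mu)+\epsilon .
\end{align*}
Indeed, $\frac1{q}\Psi_q(x)$ converges $\mu$-a.e.\ to $\sum_j d_j(x)\max\{\lambda_j(x),0\}$, and it is dominated by $\log\max\|Df^{\pm1}\|$, so dominated convergence applies. Since $f$ is $C^1$, $\Psi_q$ is continuous on $M$. Take
\begin{align*}
F=\Bigl\{\nu:\int\tfrac1q\sum_{i=0}^{q-1}\Psi_q\circ f^i\,d\nu<\int\Psi_q\,d\mu+q\epsilon\Bigr\}.
\end{align*}
Then for $x\in X_{n,F}$ the Birkhoff sums along the blocks $x,f^qx,f^{2q}x,\dots$ (averaged over the $q$ possible phases) satisfy
\begin{align*}
\sum_{\text{blocks}}\Psi_q(f^{jq+s}x)\leq n\bigl(\mathcal X^+(f,\mu)+2\epsilon\bigr)
\end{align*}
for some phase $s\in\{0,\dots,q-1\}$, up to $O(q)$ boundary terms. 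The choice of phase costs only a factor $q$ in the count.

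\textbf{Step 2: the local refinement.} By uniform continuity of $Df^q$, there is $\rho>0$ such that on every ball of radius $\rho$, $f^q$ is $C^1$-close to the linear map $D_yf^q$ in exponential charts. Hence a set of diameter $\delta\leq\rho$ near $y$ is mapped into an ellipsoid with semi-axes $\sigma_i(y)e^{q\epsilon}\delta$. The induction over the $n/q$ blocks runs as follows. Suppose a piece $P$ has $d_{jq}$-diameter at most $\delta_j$, where $\delta_j$ is a target radius that shrinks like $e^{-nr}$ (work with the fixed target $\delta=e^{-nr}$ throughout). Its image $f^{jq}P$ lies in a ball of radius $\delta$ around a point $y=f^{jq}z$. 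Then $f^q$ of that ball lies in an ellipsoid with axes $\sigma_i\delta e^{q\epsilon}$. This ellipsoid is covered by
\begin{align*}
C\prod_i\max\{\sigma_i e^{q\epsilon},1\}\leq C\,e^{\Psi_q(y)+mq\epsilon}
\end{align*}
balls of radius $\delta$. Here $C=C(m)$, since a box with sides $\max\{a_i,1\}\delta$ is covered by $\prod\lceil a_i\rceil$ cubes. Refining $P$ accordingly and multiplying over the blocks gives at most
\begin{align*}
C^{n/q}\exp\Bigl(\sum\Psi_q+nm\epsilon\Bigr)
\end{align*}
pieces of $d_n$-diameter at most $2\delta$. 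The initial step needs $N(M,e^{-nr})\leq Ce^{nmr}$ balls, which is the source of the $mr$ term.

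\textbf{Step 3: the main obstacle.} The point $y$ must be chosen so that its blocks obey the bound from Step 1, namely $y=f^{jq}z$ with $z\in X_{n,F}$. We therefore discard pieces not meeting $X_{n,F}$ and centre each piece at a point of $X_{n,F}$. The difficulty is that the Birkhoff bound is global over the orbit, whereas the refinement is greedy. We handle this by letting the combinatorics count sequences: the total number of pieces is bounded by the maximum over admissible orbits of $\exp\sum\Psi_q$, because each piece traces the orbit of its chosen centre. One also needs $\delta\leq\rho$, which holds for large $n$. We also choose $q$ large enough that $\frac{\log C}{q}<\epsilon$.

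Collecting the bounds, $\frac1n\log r_n\bigl(X_{n,F},2e^{-nr}\bigr)\leq mr+\mathcal X^+(f,\mu)+O(\epsilon)$. Remark 2.2 absorbs the factor $2$ in the radius, and letting $\epsilon\to0$ gives the lemma.
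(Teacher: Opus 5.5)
Your route differs from the paper's and can be made to work. However, the step you yourself call the main obstacle is asserted rather than proved, and the reason you give (``each piece traces the orbit of its chosen centre'') is not right. Centres are re-chosen at every level. So along a branch $P_0\supset P_1\supset\cdots\supset P_J$ the branching numbers $N(P_j)\le Ce^{\Psi_q(f^{jq+s}z_{P_j})}$ are evaluated along different orbits, and ``multiplying over the blocks'' only gives $\prod_j\max_P N(P)$, which no Birkhoff bound controls. Two ingredients close this.

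(a) A branching lemma. Give each initial ball mass $1$ and split the mass of each surviving piece equally among its $N(P)$ children. Surviving leaves then carry total mass at most $N(M,\delta)$, so their number is at most $N(M,\delta)\max_{\mathrm{branches}}\prod_jN(P_j)$.

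(b) A comparison along a branch. For any point $w$ of the leaf, $d(f^{jq+s}w,f^{jq+s}z_{P_j})\le2\delta$ for all $j$. With $\omega_q$ the modulus of continuity of $\Psi_q$,
\[
\sum_j\Psi_q(f^{jq+s}z_{P_j})\le\sum_j\Psi_q(f^{jq+s}w)+\tfrac nq\,\omega_q(2\delta),
\]
and $\omega_q(2\delta)\to0$ as $n\to\infty$ with $q$ fixed.

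For (b) to yield your bound, $w$ must satisfy the phase-$s$ block estimate. So you must first split $X_{n,F}$ into the sets $X^{(s)}_{n,F}$ of points for which phase $s$ is the good one. Then discard pieces and choose centres relative to $X^{(s)}_{n,F}$, not $X_{n,F}$ as you wrote: a point of $X_{n,F}$ whose good phase is $s'\neq s$ need not satisfy the phase-$s$ bound.

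Two smaller repairs are also needed.
\begin{itemize}
\item Controlling pieces only at block times does not give $d_n$-diameter $2\delta$, since distances can grow by up to $L^q$ between blocks (and before the first one). Take $\delta=\frac14e^{-nr}L^{-q}$. This costs only a factor $L^{qm}$ in $N(M,\delta)$, and each piece then contains at most one point of an $(n,e^{-nr})$-separated set. You therefore bound $s_n(X_{n,F},e^{-nr})$ directly; Remark~\ref{rem 2.2} concerns a fixed set, not the $n$-dependent $X_{n,F}$.
\item $C^1$-closeness puts $f^q(B(y,\delta))$ in a $\kappa\delta$-neighbourhood of $\exp_{f^qy}(D_yf^qB(0,\delta))$. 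It does not put it in an ellipsoid with semi-axes $\sigma_ie^{q\epsilon}\delta$, which is false in strongly contracted directions. The count $C\prod_i\max\{\sigma_i,1\}$ survives.
\end{itemize}

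The paper sidesteps the branch-maximum problem entirely. It builds a Voronoi partition $\mathcal P$ at scale $e^{-nr}/(4L^q)$ in which each $f^q(Q)$ meets at most $Ce^{\inf_Q\Psi_q}$ atoms, so $H_\nu(f^{-q}\mathcal P\mid\mathcal P)\le\log C+\int\Psi_q\,d\nu$ for every $\nu$. It then takes $\nu_n$ uniform on a maximal $(n,e^{-nr})$-separated subset $E_n$ of $M_{n,F}$, applies the chain rule along each phase, and sums over the $q$ phases. The error term becomes $n\int\Psi_q\,d\bar\nu_n$, where $\bar\nu_n$ is the average of the $\mathcal E_n(x)$, $x\in E_n$; this lies in $F$ because $F$ is convex. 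The entropy computation thus replaces your maximum over branches by an average over $E_n$, so no phase selection or tree lemma is needed.

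Your version is more elementary, produces an explicit cover, and uses only a pointwise Birkhoff bound on each orbit. Both arguments share the local ellipsoid count and the limit $q\to\infty$ via the multiplicative ergodic theorem and dominated convergence.
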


	\begin{proof}For any set $S\subset M$, we denote by $\mathcal{N}_a(S)$ its
		$a$-neighborhood. Then we divide the proof into  two steps.
		\begin{step1}
			We prove that there
			exists a constant
			$C\geq 1$ such that for every
			$q\in \mathbb N$ and  sufficiently small
			$a>0$, there is a finite Borel partition
			$\mathcal P_a$ of $M$
			satisfying:
			\begin{itemize}
				\item $\operatorname{diam} \mathcal P_a
				<2a$;

				\item
				$\#\mathcal P_a\leq Ca^{-m}$;

				\item for every $Q\in\mathcal P_a$ ,
				\begin{align*}
					\#\{Q'\in\mathcal P_a:f^q(Q)\cap Q'\neq\emptyset\}
					\leq Ce^{\inf_{x\in Q}\Psi_q(x)}.
				\end{align*}
			\end{itemize}
			Moreover, for every  $\nu\in \mathcal M(M)$,
			\begin{align}
				H_\nu(f^{-q}\mathcal P_a\mid\mathcal P_a)
				\leq
				\log C+\int_M\Psi_q\,\mathrm d\nu.
				\label{equ 3.1}
			\end{align}
		\end{step1}

		Take a maximal $a$-separated set $\{x_1,\ldots,x_N\}$ in $M$
		and the Voronoi partition $\mathcal P_a=\{Q_1,\ldots,Q_N\}$, where
		\begin{align*}
			Q_i
			=
			\{x\in M:d(x,x_i)\leq d(x,x_j)\text{ for every }1\leq j\leq N\}
		\end{align*}
		with ties assigned to the smallest index.
		Each $Q_i$ lies in $B(x_i,a)$, so $\operatorname{diam}\mathcal P_a<2a$.
	The balls $B(x_i,a/2)$ are pairwise disjoint. Uniform
	volume bounds on the closed manifold $M$ therefore give
	$\#\mathcal P_a\leq C_1a^{-m},$
	where $C_1$ is independent of $a$.

		Fix $Q\in\mathcal P_a$ and $x\in Q$. Since $f^q$ is $C^1$ and
		$M$ is compact, for each fixed $q$ and all sufficiently small $a>0$, one has
		$f^q\bigl(B(x,2a)\bigr)
		\subseteq
		\mathcal N_a(\mathcal E_{x,a})$,
		where $\mathcal E_{x,a}
		:=
		\exp_{f^qx}\bigl(D_xf^q(B(0,2a))\bigr).$
		In normal coordinates at $f^qx$, the set $\mathcal E_{x,a}$
		corresponds to an ellipsoid with semi-axes
		$2a\sigma_1(x),\ldots,2a\sigma_m(x)$.
		If $Q'\in\mathcal P_a$ satisfies
		$f^q(Q)\cap Q'\neq\emptyset$, then its center $x_{Q'}$ belongs to
		$\mathcal N_{2a}(\mathcal E_{x,a})$, and hence
		\begin{align*}
			B\left(x_{Q'},\frac{a}{2}\right)
			\subseteq
			\mathcal N_{\frac{5a}{2}}(\mathcal E_{x,a}).
		\end{align*}
		The balls $B(x_{Q'},a/2)$ are pairwise disjoint.
		The volume of $\mathcal N_{5a/2}(\mathcal E_{x,a})$
		is bounded by
		$C_2a^m\prod_{i=1}^m(1+\sigma_i(x))$,
		whereas each such ball has volume bounded below
		by a fixed multiple of $a^m$. Hence
		\begin{align*}
			\#\{Q'\in\mathcal P_a:f^q(Q)\cap Q'\ne\varnothing\}
			\leq C_3\prod_{i=1}^m(1+\sigma_i(x))
			\leq Ce^{\Psi_q(x)}.
		\end{align*}
		The constants are independent of $q,a,Q$, and $x\in Q$.
		Together with $\#\mathcal P_a\leq C_1a^{-m}$,
		this gives the asserted partition bounds. It also yields
		\begin{align*}
			H_\nu(f^{-q}\mathcal P_a\mid\mathcal P_a)
			\leq
			\sum_{Q\in\mathcal P_a}
			\nu(Q)
			\left(
			\log C+\inf_{x\in Q}\Psi_q(x)
			\right)
			\leq
			\log C+\int_M\Psi_q\,\mathrm d\nu.
		\end{align*}

		\begin{step2}
		 For each
			positive integer $q$, we can obtain
			\begin{align*}
				\overline h_\mu^{PS,r}(f,M)
				\leq
				mr+\frac{\log C}{q}
				+\frac1q\int_M\Psi_q\,\mathrm d\mu.
			\end{align*}
		\end{step2}

		Since $M$ is compact and $f$ is $C^1$, we take
		\begin{align*}L
			=
			\max\left\{
			2,\,
			\max_{x\in M}\|D_xf\|,\,
			\max_{x\in M}\|D_xf^{-1}\|
			\right\}
		\end{align*}
		so that both $f$ and $f^{-1}$ are
		$L$-Lipschitz.
		For all sufficiently large $n$, set $a_n=\frac{e^{-nr}}{4L^q}$
		and let $\mathcal P_n$ be the partition constructed in Step~1.
		Fix $\eta>0$ and consider the convex open neighborhood
		\begin{align*}
			F
			=
			\left\{
			\nu\in \mathcal M(M):
			\int_M\Psi_q\,\mathrm d\nu
			<
			\int_M\Psi_q\,\mathrm d\mu+\eta
			\right\}
		\end{align*}
		of $\mu$. Let $E_n\subseteq M_{n,F}$ have maximal
		cardinality among $(n,e^{-nr})$-separated sets, and let
		$\nu_n$ be uniform on $E_n$. Define
		\begin{align}\label{equ 3.2} \overline{\nu}_n
			=
			\frac{1}{n}\sum_{t=0}^{n-1}f_*^t\nu_n
			=
			\frac1{\#E_n}\sum_{x\in E_n}\mathcal E_n(x)
		\end{align}
		Since $F$ is convex, one has $\overline{\nu}_n\in F$.

		Fix
		$0\leq j\leq q-1$ and denote by $p_j:=[\frac{n-1-j}{q}]$. Consider the partition $$\mathcal Q_{n,j}
		=\bigvee_{k=0}^{p_j}
		f^{-(j+kq)}\mathcal P_{n}.$$
		The partition $\mathcal Q_{n,j}$ separates $E_n$. Indeed, if
		$x,y\in E_n$ lie in the same atom, then $d(f^{j+kq}x,f^{j+kq}y)<2a_n$. For each $0\leq t\leq n-1$, the Lipschitz property indicates
		$d(f^tx,f^ty)
		<
		2L^qa_n
		<
		e^{-nr},$
		contrary to the separation of $E_n$. Hence
		$H_{\nu_n}(\mathcal Q_{n,j})=\log\#E_n$.
		On the other hand,
		\begin{align*}
			H_{\nu_n}(\mathcal Q_{n,j})\leq H_{\nu_n}(f^{-j}\mathcal P_{n})+\sum_{k=1}^{p_j} H_{\nu_n}\left(f^{-(j+kq)}\mathcal P_{n}\mid \bigvee_{l=0}^{k-1} f^{-(j+lq)}\mathcal P_n\right)
		\end{align*}
		Summing over $0\leq j<q$ and combining the preceding inequalities,
		we obtain
		\begin{align*}
			q\log\#E_n
			\leq
			q\log\#\mathcal P_n+
			\sum_{j=0}^{q-1}\sum_{k=1}^{p_j}
			H_{f_*^{j+(k-1)q}\nu_n}
			\bigl(f^{-q}\mathcal P_n\mid\mathcal P_n\bigr).
		\end{align*}
		By \eqref{equ 3.1} and $\Psi_q\geq0$,
		\begin{align*}
			q\log\#E_n
			\leq
			q\log\#\mathcal P_n
			+(n-q)\log C
			+\sum_{t=0}^{n-1}
			\int_M\Psi_q\,\mathrm d(f_*^t\nu_n).
		\end{align*}
		Dividing both sides by $nq$ and using \eqref{equ 3.2}, we obtain
		\begin{align*}
			\frac{1}{n}\log\#E_n
			&\leq
			mr+\frac{m\cdot \log 4L^q+\log C}{n}
			+\frac{\log C}{q}
			+\frac{1}{q}\int_M\Psi_q\,\mathrm d \overline{\nu}_n,
		\end{align*}
		Since $\overline{\nu}_n\in F$, taking the upper limit and letting $\eta\to0$ gives
		\begin{align*}
			\overline h_\mu^{PS,r}(f, M)
			\leq
			mr+\frac{\log C}{q}
			+\frac1q\int_M\Psi_q\,\mathrm d\mu.
		\end{align*}
		Since $0\leq\frac{\Psi_q(x)}q\leq m\log L$, the multiplicative
		ergodic theorem and dominated convergence give
		\begin{align*}
			\overline h_\mu^{PS,r}(f, M)
			\leq
			mr+	\lim_{q\to\infty}
			\frac1q\int_M\Psi_q\,\mathrm d\mu
			=mr+
			\mathcal X^+(f,\mu).
		\end{align*}
	\end{proof}

	If we further strengthen the smoothness assumptions on the system, we can obtain a stronger upper bound estimate. Before the result, we need the following observation.
	\begin{lemma}\cite{PS07}\label{lem 3.2}
		For every $\mu\in \mathcal M_f(M)$ and every $\delta>0$, we have
		\begin{equation*}
			\inf_{F\ni\mu}\limsup_{n\to\infty}\frac{1}{n}
			\log s_n(f,X_{n,F},\delta)
			\leq h_\mu(f).
		\end{equation*}
		If $\mu$ is ergodic, then for every $\eta>0$
		there is $\delta>0$
		such that  for each neighborhood $F\ni\mu$, there exists $N_F\in \mathbb N$ such that for any $n\geq N_F$,
		\begin{equation*}
			s_n(f,X_{n,F},4\delta)
			\geq e^{n(h_\mu(f)-\eta)}.
		\end{equation*}
	\end{lemma}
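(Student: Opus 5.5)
The first inequality is the Pfister--Sullivan bound from \cite{PS07}, and the plan is to follow their argument. Fix $\delta>0$ and $\eta>0$. Because $\mu\mapsto h_\mu(f)$ need not be upper semicontinuous at a fixed scale, I would not bound via $h_\nu$ directly. Instead, fix a finite Borel partition $\xi$ of $X$ with $\operatorname{diam}\xi<\delta$ and $\mu(\partial\xi)=0$; such a partition exists because balls have $\mu$-null boundaries for all but countably many radii. Choose $q$ with $\frac1q H_\mu(\bigvee_{i=0}^{q-1}f^{-i}\xi)<h_\mu(f,\xi)+\eta\le h_\mu(f)+\eta$. Since $\mu(\partial\xi^q)=0$, the map $\nu\mapsto H_\nu(\xi^q)$ is continuous at $\mu$. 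So we can take $F$ to be a convex neighborhood on which $\frac1q H_\nu(\xi^q)<\frac1q H_\mu(\xi^q)+\eta$.

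Given a maximal $(n,\delta)$-separated set $E_n\subset X_{n,F}$, let $\nu_n$ be uniform on $E_n$. Each atom of $\bigvee_{i<n}f^{-i}\xi$ contains at most one point of $E_n$, since $\operatorname{diam}\xi<\delta$. Hence $\log\#E_n=H_{\nu_n}(\xi^n)$. The standard Misiurewicz splitting into $q$ residue classes then gives
\[
\log\#E_n\le \frac nq H_{\overline\nu_n}(\xi^q)+2q\log\#\xi,
\]
where $\overline\nu_n=\frac1{\#E_n}\sum_{x\in E_n}\mathcal E_n(x)$ is the averaged measure. This is the same device used in Step~2 of Lemma~\ref{lem 3.1}. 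Convexity of $F$ gives $\overline\nu_n\in F$, so $\frac1n\log s_n\le h_\mu(f)+2\eta+o(1)$. Letting $\eta\to0$ proves the first claim.

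For the second claim, with $\mu$ ergodic, I would use Katok's entropy formula: for $\gamma\in(0,1)$ fixed, $\lim_{\delta\to0}\liminf_n\frac1n\log N_\mu(n,8\delta,\gamma)=h_\mu(f)$. Here $N_\mu(n,\epsilon,\gamma)$ is the minimal number of $(n,\epsilon)$-Bowen balls covering a set of $\mu$-measure at least $1-\gamma$. Choose $\delta$ so that $N_\mu(n,8\delta,\gamma)\ge e^{n(h_\mu(f)-\eta)}$ for all large $n$. Fix a neighborhood $F$. By the ergodic theorem, $\mu(X_{n,F})\to1$, so for $n\ge N_F$ we have $\mu(X_{n,F})>1-\gamma$. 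A maximal $(n,4\delta)$-separated subset of $X_{n,F}$ is $(n,4\delta)$-spanning for $X_{n,F}$, and hence gives a cover by $(n,8\delta)$-balls of a set of measure $>1-\gamma$. Its cardinality is therefore at least $N_\mu(n,8\delta,\gamma)\ge e^{n(h_\mu(f)-\eta)}$. Katok's formula is stated with $\liminf$ and is uniform only after $\delta$ is chosen, so the threshold $N_F$ absorbs both the Katok threshold and the ergodic-theorem threshold.

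The main obstacle is the first claim, in the case $h_\mu(f)$ is computed via a partition at scale $\delta$ while neighborhoods must control $H_\nu$ for nearby non-invariant $\overline\nu_n$. The boundary-null partition together with the Misiurewicz averaging is exactly what handles this.
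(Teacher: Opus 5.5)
Your argument is correct. The paper gives no proof of its own and only cites \cite{PS07}; your reconstruction follows the standard argument behind that citation. For the upper bound you use a Misiurewicz-type averaging over a partition of small diameter with $\mu$-null boundary, together with a convex neighborhood containing $\overline\nu_n$. For the ergodic lower bound you use Katok's entropy formula combined with $\mu(X_{n,F})\to 1$. (A maximal $(n,4\delta)$-separated set already covers $X_{n,F}$ by $(n,4\delta)$-balls, so passing to $8\delta$ is unnecessary, though harmless.)
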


	The Reparametrization lemma  also plays an important role in the proof.

	\begin{lemma}\cite[Lemma~3.3]{BLY15}\label{lem 3.3}
		Let $\sigma:[0,1]^m\to\mathbb R^m$ be a $C^k$ $m$-disk with
		$\|\sigma\|_k<1$ and $(T_n:(-2,2)^m\to\mathbb R^m)_n$ be
		$C^k$ maps  with
		$\|D^{j+1}T_n\|\leq A_n$ for $1\leq j\leq k-1$, where
		$A_n\geq\max\{\|DT_n\|,1\}$. Then  for every $n$, there exists
		a family $\mathcal F_n:=\{\psi_n:[0,1]^m\to[0,1]^m\}$ of continuous maps
		, real analytic on $(0,1)^m$
		such that
		\begin{enumerate}
			\item[\textup{(i)}] $\|T^n\circ\sigma\circ\psi_n\|_k\leq1$;
			\item[\textup{(ii)}] $\|D(T^j\circ\sigma\circ\psi_n)\|\leq1$
			for $0\leq j\leq n$;
			\item[\textup{(iii)}]
			$\displaystyle\bigcap_{j=0}^{n}(T^j\circ\sigma)^{-1}((-1,1)^m)
			\subseteq\bigcup_{\psi_n\in\mathcal F_n}\psi_n([0,1]^m)$;
			\item[\textup{(iv)}]
			$\displaystyle\#\mathcal F_{n+1}
			\leq\widetilde C_{k,m}\#\mathcal F_n(A_nB_k^2)^{m/k}$.
		\end{enumerate}
		where $T^n=T_n\circ\cdots\circ T_1$, the constant $\widetilde C_{k,m}$ depends only on $k,m$, and $B_k$ is the $k$-th Bell number.
	\end{lemma}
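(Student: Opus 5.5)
The plan is to prove the lemma by induction on $n$, following the Yomdin--Gromov reparametrization scheme as in \cite{BLY15}. For $n=0$ we take $\mathcal F_0=\{\mathrm{id}\}$; properties (i)--(iii) hold because $\|\sigma\|_k<1$. Assume $\mathcal F_n$ has been built. Fix $\psi_n\in\mathcal F_n$ and write $g=T^n\circ\sigma\circ\psi_n$, so that $\|g\|_k\leq1$. We will produce a family $\Phi$ of maps $\phi:[0,1]^m\to[0,1]^m$, real analytic on $(0,1)^m$, with the following properties:
\begin{itemize}
\item $\|D\phi\|\leq1$;
\item $\|T_{n+1}\circ g\circ\phi\|_k\leq1$;
\item the union of the images $\phi([0,1]^m)$ covers $g^{-1}(T_{n+1}^{-1}(-1,1)^m)\cap[0,1]^m$;
\item $\#\Phi\leq\widetilde C_{k,m}(A_nB_k^2)^{m/k}$.
\end{itemize}
We then set $\mathcal F_{n+1}=\{\psi_n\circ\phi\}$. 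Property (iv) is immediate. Property (iii) follows because $\psi_n$ already covers the points whose first $n$ iterates stay in $(-1,1)^m$. For property (ii), earlier iterates satisfy $\|D(T^j\circ\sigma\circ\psi_n\circ\phi)\|\leq\|D(T^j\circ\sigma\circ\psi_n)\|\,\|D\phi\|\leq1$, and the last iterate is controlled by the $C^k$ bound. Analyticity on $(0,1)^m$ is preserved under composition provided each $\phi$ maps the open cube into itself, which we arrange.

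Constructing $\Phi$ takes two steps. \emph{Step (a): subdivision.} Cut $[0,1]^m$ into subcubes of side $\rho\approx (A_nB_k^2)^{-1/k}$, and let $\alpha$ be the affine parametrization of each subcube. The Fa\`a di Bruno formula expresses $D^s(T_{n+1}\circ g)$ as a sum over set partitions. Using $\|D^jg\|\leq1$ and $\|D^{j}T_{n+1}\|\leq A_n$ for $j\le k$, the number of terms is at most $B_s\leq B_k$, so
\begin{align*}
\|D^k(T_{n+1}\circ g\circ\alpha)\|\leq \rho^kB_kA_n\leq B_k^{-1}.
\end{align*}
Hence the top derivative is as small as we like, at a cost of $\rho^{-m}=(A_nB_k^2)^{m/k}$ pieces. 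The lower derivatives, however, are not yet controlled: $\|D(T_{n+1}\circ g\circ\alpha)\|$ may be as large as $\rho A_n$.

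\emph{Step (b): algebraic lemma.} On each subcube, write $h=T_{n+1}\circ g\circ\alpha=P+R$, where $P$ is the degree-$(k-1)$ Taylor polynomial and $\|R\|_k$ is small. We discard subcubes whose image misses $(-1,1)^m$. On the remaining ones, the smallness of $D^kh$ together with the fact that $h$ meets the unit cube bounds $P$ uniformly on the region $\{|P|\leq 2\}$. We then apply the Gromov--Yomdin algebraic lemma to the semialgebraic set $P^{-1}([-2,2]^m)\cap[0,1]^m$ and the map $P$. This yields a number $N(k,m)$ of semialgebraic, real-analytic-in-the-interior reparametrizations $\theta$ with $\|\theta\|_k\leq1$ and $\|P\circ\theta\|_k\leq1$. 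For the remainder, a chain-rule estimate gives $\|R\circ\theta\|_k\leq C_{k,m}\|R\|_k$, which is small. After a further uniform subdivision by a constant depending only on $k,m$ and a rescaling by $1/2$, we get $\|h\circ\theta\|_k\leq1$. We take $\phi=\alpha\circ\theta$, and $\|D\phi\|\leq1$ holds since $\rho\leq1$ and $\|D\theta\|\leq1$.

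The main obstacle is step (b). One must invoke the algebraic lemma in a form that produces maps real analytic on the open cube with a bound depending only on $(k,m)$, and check that the remainder $R$ does not spoil the $C^k$ bound after composition. I would first try to quote the semialgebraic version of the algebraic lemma (Gromov, Burguet), which provides analytic charts. The bookkeeping of the Bell-number factor in step (a) is routine, and this is why the factor $B_k^2$ appears in (iv).
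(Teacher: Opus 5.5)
Your sketch is the standard Yomdin--Gromov reparametrization argument behind \cite[Lemma~3.3]{BLY15}: induction on $n$, a Fa\`a di Bruno subdivision at scale $(A B_k^2)^{-1/k}$ to make the top derivative small, then the degree-$(k-1)$ Taylor polynomial and the semialgebraic algebraic lemma to control the lower derivatives. The paper gives no proof of its own and only quotes that result, so your route is the intended one, up to two bookkeeping corrections. First, the step $n\to n+1$ composes with $T_{n+1}$, whose derivatives are bounded by $A_{n+1}$ and not by $A_n$, so what you actually prove is (iv) with $A_{n+1}$ (equivalently, $T^n=T_{n-1}\circ\cdots\circ T_0$); this indexing slip comes from the statement as quoted and is harmless for the product in Lemma~\ref{lem 3.4}. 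Second, your final rescaling gives (i) only under the convention that $\|\cdot\|_k$ controls derivatives of orders $1,\dots,k$, since $h\circ\theta$ takes values in roughly $[-2,2]^m$, and you must also discard final pieces whose image misses $(-1,1)^m$ so that all images stay where $T_{n+2}$ is defined.
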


	\begin{lemma}\label{lem 3.4}
		Let $f:M\to M$ be a  $C^k$ diffeomorphism with $k\geq 2$ of a closed
		Riemannian $m$-manifold.
		For any integer $p\geq1$, put $\psi_p(x)=\log\max\{1,\|D_xf^p\|\}.$ Then
		there exists $C_k\geq1$
		such that  for every  $\eta>0$,
		there exists $\delta$ and $C>0$ such that
		\begin{align*}
			r_n(f,B_n(x,\delta),\epsilon)
			\leq C\epsilon^{-m}
			\exp\biggl(
			n\eta+\frac{n\log C_k}{p}
			+\frac{m}{kp}
			\sum_{t=0}^{n-1}\psi_p(f^tx)
			\biggr)
		\end{align*}
		holds	for every $x\in M$, $n\geq p$ and $0<\epsilon<1$.
	\end{lemma}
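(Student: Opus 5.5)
The estimate is a Yomdin--Gromov type bound. The plan is to cover a Bowen ball $B_n(x,\delta)$ by images of reparametrized $C^k$ disks, using Lemma~\ref{lem 3.3} block by block in time steps of length $p$. Then we count $\epsilon$-balls at the end.

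\emph{Localization.} We fix charts $\{\exp_y\}$ of a uniform radius $\rho_0$, rescaled so that they become maps on $(-2,2)^m$. Given $\eta>0$, we choose $\delta>0$ so small that two things hold for every $y$. First, the rescaled local map
\begin{align*}
T_y=\rho^{-1}\exp_{f^py}^{-1}\circ f^p\circ\exp_y\circ\rho ,
\end{align*}
where $\rho$ is rescaling by a factor of order $\delta$, has $\|DT_y\|\leq e^{\eta p}\max\{1,\|D_yf^p\|\}$ on $(-2,2)^m$; this uses the uniform continuity of $Df^p$. Second, the higher derivatives satisfy $\|D^{j+1}T_y\|\leq \|DT_y\|$ for $1\leq j\leq k-1$. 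The second condition holds because rescaling by $\delta$ multiplies $D^{j+1}$ by $\delta^{j}$, which is small. We may therefore take
\begin{align*}
A_i=e^{\eta p}\exp\psi_p(f^{ip}x)\geq\max\{1,\|DT\|\}.
\end{align*}
Write $n=sp+t$ with $0\leq t<p$. Let $\sigma$ be the rescaled identity chart covering $B(x,\delta)$. Apply Lemma~\ref{lem 3.3} with $T_i=T_{f^{(i-1)p}x}$, $i=1,\dots,s$. Conclusion~(iii) shows that $B_n(x,\delta)$ is covered by the images $\exp\circ\sigma\circ\psi([0,1]^m)$, $\psi\in\mathcal F_s$, and conclusion~(iv) gives
\begin{align*}
\#\mathcal F_s\leq \prod_{i<s}\widetilde C_{k,m}(A_iB_k^2)^{m/k}.
\end{align*}

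\emph{From disks to $(n,\epsilon)$-balls.} By conclusion~(ii), each composition $T^j\circ\sigma\circ\psi$ has Lipschitz constant at most $1$ in rescaled coordinates. So the image $f^{jp}(\exp\circ\sigma\circ\psi([0,1]^m))$ has diameter $\lesssim\delta$ for every $j\leq s$. Between the times $jp$ the map $f$ is $L$-Lipschitz, so every intermediate iterate also has diameter $\lesssim L^p\delta$. This is still not $\epsilon$, however. To get down to $\epsilon$, subdivide $[0,1]^m$ into $\lceil c L^p\epsilon^{-1}\rceil^m$ subcubes. Because all the compositions are $1$-Lipschitz, each subcube gives a set of $d_n$-diameter $<\epsilon$. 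This contributes a factor $C\epsilon^{-m}L^{pm}$, where $L^{pm}$ is a constant depending only on $p$ and can be absorbed into $C$.

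\emph{Collecting constants.} Taking logarithms gives
\begin{align*}
\log r_n\leq \log C+m\log\epsilon^{-1}+s\log(\widetilde C_{k,m}B_k^{2m/k})+\frac{m}{k}\Bigl(s\eta p+\sum_{i<s}\psi_p(f^{ip}x)\Bigr).
\end{align*}
The sum only samples the times $ip$, whereas the statement averages over all $t$. To fix this, run the argument with each offset $0\leq j<p$: start at $f^jx$ and handle the first $j$ steps by the Lipschitz constant, which costs only a constant. Since $r_n$ does not depend on the offset, the bound holds for every $j$, and so for the minimum over $j$. The minimum is at most the average, and averaging over $j$ turns $\sum_i\psi_p(f^{ip+j}x)$ into $\frac1p\sum_{t=0}^{n-1}\psi_p(f^tx)$ up to $O(p\log L)$ boundary terms. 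Finally set $C_k=\widetilde C_{k,m}B_k^{2m/k}$ and rename $\eta$ to absorb $\frac{m}{k}\eta$.

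The main obstacle is the chart step. One needs a single $\delta$ for which $\|DT_y\|\leq e^{\eta p}\max\{1,\|D_yf^p\|\}$ holds uniformly on the whole rescaled domain, and for which the $C^k$ norms are dominated as Lemma~\ref{lem 3.3} requires. One also has to handle the transition between consecutive charts: the image lies in the chart at $f^{p}y$ of the orbit point, and it must stay there, which is guaranteed by conclusion~(iii) together with membership in the Bowen ball. I would take $\delta\ll\rho_0/L^p$ and use compactness and uniform continuity of $Df^p$.
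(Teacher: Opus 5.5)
Your proposal is correct and follows essentially the same route as the paper. Both arguments use rescaled exponential charts along the orbit of $x$ with $A_t\approx e^{O(\eta p)}\max\{1,\|D_{f^tx}f^p\|\}$, apply Lemma~\ref{lem 3.3} in time blocks of length $p$ starting from affine initial disks, and subdivide into $O_p(\epsilon^{-m})$ cubes using bound (ii) plus a $p$-dependent Lipschitz constant for intermediate times. Both then run the argument for each offset $a\in\{0,\dots,p-1\}$ and take the minimum (equivalently the geometric mean), which turns the sampled sums into $\frac1p\sum_{t<n}\psi_p(f^tx)$.

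One small point needs tidying. When $p\mid n$, your last block lands at time $sp=n$, which the Bowen ball does not control. Either stop the blocks strictly before time $n$, as the paper does with $N_a=\#\{j\geq0:a+(j+1)p<n\}$, or enlarge the rescaling factor to absorb one extra Lipschitz step.
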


	\begin{proof}
		Fix $\eta>0$. Consider normal coordinates along the orbit
		of $x$ and choose $0<r_M<\operatorname{inj}(M)$.
		Since $f^p$ is uniformly continuous, there exits $\gamma>0$
		such that for any $y,z\in M$,
		\begin{align*}
			d(y,z)<\gamma
			\quad\Longrightarrow\quad
			d(f^py,f^pz)<r_M.
		\end{align*}
		Choose
		$0<h<\frac{\min\{\gamma,r_M\}}{2\sqrt m}$.
		For $t\geq0$ and $u\in(-2,2)^m$, define
		\begin{align*}
			T_t(u)
			=h^{-1}\exp_{f^{t+p}x}^{-1}
			\bigl(f^p(\exp_{f^tx}(hu))\bigr).
		\end{align*}
		Thus $T_t$ is well-defined on $(-2,2)^m$ for all $x\in M$
		and $t\geq0$.
		Define
		\begin{align*}
			A_t(x)
			=\exp\left(\frac{pk\eta}{2m}\right)\cdot
			\max\{1,\|D_{f^tx}f^p\|\}.
		\end{align*}
		As $h\to 0$, $DT_t(u)\to D_{f^t x}f^p$ in operator norm, which is uniformly in $x,t,u$. Moreover, $\|D^jT_t\|_\infty=O(h^{j-1})$ for $2\le j\le k$.
		Then	by decreasing $h$ if necessary, for $1\leq j\leq k$,
		we obtain
		\begin{align}\label{equ 3.3}
			\|D^jT_t\|_\infty\leq A_t(x).
		\end{align}

		Take $\delta=\frac{h}{4}$ and fix $a\in\{0,\ldots,p-1\}$.  We stratify time modulo $p$. Define
		\begin{align*}
			N_a=\#\{j\geq0:a+(j+1)p<n\}.
		\end{align*}
		For $y\in B_n(x,\delta)$, we take $u_a(y)
		:=h^{-1}\exp_{f^ax}^{-1}(f^ay)$. Then
		$u_a(y)\in B(0,\frac{1}{4})$ and for
		$1\leq j\leq N_a$, we have the following relation:
		\begin{align*}
			T_{a+(j-1)p}\circ\cdots\circ T_a(u_a(y))
			=h^{-1}\exp_{f^{a+jp}x}^{-1}(f^{a+jp}y)
			\in B(0,\frac{1}{4})\subset (-1,1)^m.
		\end{align*}
		By the compactness, we can cover $B(0,\frac{1}{4})$ by finite affine
		$m$-disks
		$\sigma:[0,1]^m\to(-1,1)^m$ with
		$\|\sigma\|_k<1$. We denote by $C_0$ the amount of such $m$-disks.
		By \eqref{equ 3.3}, we may apply
		Lemma~\ref{lem 3.3} to each initial disk
		$\sigma$ and the sequence
		$T_a,T_{a+p},\ldots,T_{a+(N_a-1)p}$.
		Thus there are at most
		\begin{align}\label{equ 3.4}
			C_0\prod_{j=0}^{N_a-1}
			C_{k,m}A_{a+jp}(x)^{m/k}
		\end{align}
		maps of the form $\sigma\circ\varphi$ such that their images  cover
		$\{u_a(y):y\in B_n(x,\delta)\}$, where
		$\varphi:[0,1]^m\to[0,1]^m$.

		For each map $\sigma\circ\varphi$ ,  we define
		\begin{align*}
			\Phi(v)
			=f^{-a}\bigl(\exp_{f^ax}(h\sigma(\varphi(v)))\bigr).
		\end{align*}
		Put $\rho_0=\sigma\circ\varphi$ and for $1\leq j\leq N_a$, set
		\begin{align*}
			\rho_j
			=T_{a+(j-1)p}\circ\cdots\circ T_a\circ\rho_0.
		\end{align*}
		By Lemma~\ref{lem 3.3}\textup{(ii)}, one has
		$\|D\rho_j\|_\infty\leq1$.
		Since $f^{a+jp}\circ\Phi
		=\exp_{f^{a+jp}x}\circ(h\rho_j)$,
		the derivatives of these maps are uniformly bounded.
		As $p$ is fixed, the same holds at the intervening times.
		Hence, there exists $L_p>0$ (independent of $x,n$) such that
		\begin{align*}
			d_n(\Phi(v),\Phi(w))\leq L_p|v-w|.
		\end{align*}

		Notice that we can divide $[0,1]^m$ into  cubes $Q$ of diameter less than $\frac{\epsilon}{2L_p}$
		with the amount at most
		$C_p\epsilon^{-m}$, where $C_p$ only depends on $m$ and $p, L_p$. For each cube $Q$ with
		$\Phi(Q)\cap B_n(x,\delta)\ne\varnothing$,
		one can choose $z_Q\in\Phi(Q)\cap B_n(x,\delta)$.
		If $y\in\Phi(Q)\cap B_n(x,\delta)$, then
		\begin{align*}
			d_n(y,z_Q)\leq L_p\operatorname{diam}(Q)
			<\epsilon.
		\end{align*}
		Thus, using \eqref{equ 3.4}, we have
		\begin{align*}
			r_n(f,B_n(x,\delta),\epsilon)
			\leq C_pC_0\epsilon^{-m}
			\prod_{j=0}^{N_a-1}
			C_{k,m}A_{a+jp}(x)^{\frac{m}{k}}
			\leq C'_p\epsilon^{-m}
			\prod_{j=0}^{N_a-1}
			C_{k,m}A_{a+jp}(x)^{\frac{m}{k}},
		\end{align*}
		where $C'_p=C_pC_0$.
	Taking the geometric mean of these $p$ bounds and using
	$\{a+jp:0\leq a<p,\ 0\leq j<N_a\}
	=\{0,\ldots,n-p-1\}$, we obtain
	\begin{align*}
		r_n(f,B_n(x,\delta),\epsilon)
		&\leq C'_p\epsilon^{-m}
		C_{k,m}^{(n-p)/p}
		\left(\prod_{t=0}^{n-p-1}A_t(x)\right)^{m/(kp)}\\
		&\leq C'_p\epsilon^{-m}
		\exp\biggl(
		n\eta+\frac{n\log C_{k,m}}p
		+\frac{m}{kp}\sum_{t=0}^{n-1}\psi_p(f^tx)
		\biggr).
	\end{align*}
	This proves the lemma.
	\end{proof}

	\begin{proposition}\label{prop 3.5}
		Let $f:M\to M$ be a  $C^k$ diffeomorphism with $k\geq 2$ of a closed
		Riemannian $m$-manifold. For every
		$\mu\in\mathcal M_f(M)$ and $r>0$,
		\begin{align*}
			\overline h_\mu^{PS,r}(f,M)
			\leq mr+\min\left\{
			\mathcal X^+(f,\mu),
			h_\mu(f)+\frac{m}{k}\chi_{\max}^+(f,\mu)
			\right\}.
		\end{align*}
	\end{proposition}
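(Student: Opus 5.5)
The bound by $mr+\mathcal X^+(f,\mu)$ is exactly Lemma~\ref{lem 3.1}, so it remains to prove
\begin{align*}
\overline h_\mu^{PS,r}(f,M)\leq mr+h_\mu(f)+\frac{m}{k}\chi_{\max}^+(f,\mu),
\end{align*}
where $\chi_{\max}^+(f,\mu)$ is read as $\mathcal X_{\max}^+(f,\mu)$. The plan is a two-scale covering argument. At the coarse scale $\delta$ we use Lemma~\ref{lem 3.2} to count the orbit segments. Inside each coarse Bowen ball we use Lemma~\ref{lem 3.4} to count $(n,e^{-nr})$-balls, and this count costs $\epsilon^{-m}=e^{nmr}$ times an exponential of the Birkhoff sum of $\psi_p$. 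The Birkhoff sum is controlled by localizing the empirical measure near $\mu$.

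Fix $p\geq1$ and $\eta>0$, and let $\delta$ and $C$ be the constants given by Lemma~\ref{lem 3.4}. Since $\psi_p$ is continuous, the set
\begin{align*}
F_1=\left\{\nu:\int\psi_p\,d\nu<\int\psi_p\,d\mu+\eta\right\}
\end{align*}
is an open neighborhood of $\mu$. By Lemma~\ref{lem 3.2} there is an open neighborhood $F_2\ni\mu$ with $\limsup_n\frac1n\log s_n(f,M_{n,F_2},\delta/2)\leq h_\mu(f)+\eta$. (This is the non-ergodic half of Lemma~\ref{lem 3.2}, which holds for every $\mu\in\mathcal M_f(M)$.) Put $F=F_1\cap F_2$.

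Take a maximal $(n,\delta/2)$-separated set $\{x_i\}$ of $M_{n,F}$; its balls $B_n(x_i,\delta/2)$ cover $M_{n,F}$. The centers $x_i$ lie in $M_{n,F}$, so for each $i$ we have $\sum_{t<n}\psi_p(f^tx_i)=n\int\psi_p\,d\mathcal E_n(x_i)<n(\int\psi_p\,d\mu+\eta)$. Apply Lemma~\ref{lem 3.4} to $B_n(x_i,\delta/2)\subset B_n(x_i,\delta)$ with $\epsilon=e^{-nr}/2$. This gives
\begin{align*}
r_n(f,M_{n,F},e^{-nr}/2)\leq \#\{x_i\}\cdot C2^me^{nmr}\exp\Bigl(n\eta+\tfrac{n\log C_k}{p}+\tfrac{nm}{kp}\bigl(\textstyle\int\psi_p\,d\mu+\eta\bigr)\Bigr).
\end{align*}
Remark~\ref{rem 2.2} converts spanning sets at radius $e^{-nr}/2$ into separated sets at radius $e^{-nr}$. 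Taking $\limsup$ and then letting $\eta\to0$ gives
\begin{align*}
\overline h_\mu^{PS,r}\leq mr+h_\mu(f)+\frac{\log C_k}{p}+\frac{m}{kp}\int\psi_p\,d\mu.
\end{align*}

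The last step lets $p\to\infty$, and this is where the main obstacle sits. First, we need $\frac1p\int\psi_p\,d\mu\to\mathcal X_{\max}^+(f,\mu)$. By Kingman's subadditive ergodic theorem, $\frac1p\log\|D_xf^p\|\to\lambda_1(x)$ almost everywhere. Hence $\frac1p\psi_p\to\max\{\lambda_1,0\}$, and since $0\leq\psi_p/p\leq\log L$, dominated convergence gives the limit. Second, and more seriously, the term $\frac{\log C_k}{p}$ must vanish. This requires the constant $C_k$ in Lemma~\ref{lem 3.4} to be independent of $p$, which holds if it is the constant $C_{k,m}$ of Lemma~\ref{lem 3.3}, depending only on $k$ and $m$. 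I would check this carefully; in particular, any $p$-dependent factor such as $C'_p$ or $C$ may only be absorbed into the prefactor, which is subexponential in $n$ for fixed $p$. If it holds, both error terms disappear as $p\to\infty$ and the proposition follows, with the minimum obtained by combining with Lemma~\ref{lem 3.1}.
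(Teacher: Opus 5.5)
Your proposal is correct and is essentially the paper's proof: you reduce to the second term via Lemma~\ref{lem 3.1}, localize with $F=F_1\cap F_2$ so that Lemma~\ref{lem 3.2} bounds the number of coarse Bowen balls and $F_1$ bounds the Birkhoff sums of $\psi_p$ at their centers, apply Lemma~\ref{lem 3.4} with $\epsilon=\frac12e^{-nr}$, and then let $\eta\to0$ and $p\to\infty$ by dominated convergence. The point you flag does hold, since in the proof of Lemma~\ref{lem 3.4} the constant $C_k$ is the per-step constant $C_{k,m}$ from Lemma~\ref{lem 3.3}, which depends only on $k$ and $m$, while every $p$-dependent quantity ($\delta$, $L_p$, $C'_p$) affects only the coarse scale or an $n$-independent prefactor; your explicit conversion $s_n(f,M_{n,F},e^{-nr})\leq r_n(f,M_{n,F},\frac12e^{-nr})$ is a small piece of care that the paper glosses over.
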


	\begin{proof}
		By Lemma~\ref{lem 3.1}, it suffices to prove
		\begin{align}\label{equ 3.5}
			\overline h_\mu^{PS,r}(f,M)\leq mr+h_\mu(f)+\frac{m}{k}\chi_{\max}^+(f,\mu).
		\end{align}
		Fix $p\geq1$ and $\eta>0$, and let $\delta>0$ be given
		by Lemma~\ref{lem 3.4}. Choose a neighborhood $F_1$ of
		$\mu$ such that, for every $\nu\in F_1$,
		$\int_M\psi_p\,\mathrm d\nu
		\leq \int_M\psi_p\,\mathrm d\mu+\eta.$
		By Lemma~\ref{lem 3.2}, there is a neighborhood
		$F_2$ of $\mu$ such that
		\begin{align*}
			\limsup_{n\to\infty}\frac1n
			\log s_n(f,M_{n,F_2},\delta)
			\leq h_\mu(f)+\eta.
		\end{align*}
		Set $F=F_1\cap F_2$, and let $E_n$ be a maximal $(n,\delta)$-separated subset of $M_{n,F}$. Then one has
		\begin{align*}
			M_{n,F}\subseteq\bigcup_{x\in E_n}B_n(x,\delta).
		\end{align*}
		For every $x\in E_n$, we have $\mathcal E_n(x)\in F_1$, and hence
		\begin{align*}
			\frac{1}{n}\sum_{t=0}^{n-1}\psi_p(f^tx)
			=\int_M\psi_p\,\mathrm d\mathcal E_n(x)
			\leq\int_M\psi_p\,\mathrm d\mu+\eta.
		\end{align*}
		Apply Lemma  \ref{lem 3.4} with
		$\epsilon=\frac{1}{2}e^{-nr}$ to each $B_n(x,\delta)$,
		$x\in E_n$.  Consequently,
		\begin{align*}
			r_n(f,M_{n,F},e^{-nr})
			\leq C2^m e^{mrn}\#E_n
			\exp\left(
			n\eta+\frac{n\log C_k}{p}
			+\frac{mn}{kp}
			\left(\int_M\psi_p\,\mathrm d\mu+\eta\right)
			\right).
		\end{align*}
		Since $\#E_n\leq s_n(f,M_{n,F_2},\delta)$, Lemma~\ref{lem 3.2}
		gives
		\begin{align*}
			\overline h_\mu^{PS,r}(f,M)
			\leq h_\mu(f)+mr+\frac{\log C_k}{p}
			+\frac{m}{sp}\int_M\psi_p\,\mathrm d\mu
			+\left(2+\frac{m}{kp}\right)\eta.
		\end{align*}
		Letting $\eta\to0$ shows
		\begin{align*}
			\overline h_\mu^{PS,r}(f,M)
			\leq h_\mu(f)+mr+\frac{\log C_k}{p}
			+\frac{m}{kp}\int_M\psi_p\,\mathrm d\mu.
		\end{align*}
		By the multiplicative ergodic theorem, for $\mu$-a.e. $x\in M$, one has $\frac{\psi_p(x)}p
		\longrightarrow\max\{\lambda_1(x),0\}.$
		Moreover, $0\leq\frac{\psi_p(x)}p
		\leq\log\max\{1,\|Df\|_\infty\}.$
		Dominated convergence yields
		\begin{align*}
			\lim_{p\to\infty}\frac1p
			\int_M\psi_p\,\mathrm d\mu
			=\chi_{\max}^+(f,\mu).
		\end{align*}
		Letting $p\to\infty$ proves
		\eqref{equ 3.5}.
	\end{proof}

	We first give the proof of Theorem \ref{thm 1.3}.
	\begin{proof}[Proof of
		Theorem~\ref{thm 1.3}]
		Since $\mathcal X^+(f^{-1},\mu)=\mathcal X^-(f,\mu)$ and  $\mathcal X^+_{\max}(f^{-1},\mu)=\mathcal X^-_{\max}(f,\mu)$ by Lemma \ref{lem 3.1} and Proposition \ref{prop 3.5}, we are left to prove
		\begin{align*}
			\overline h_\mu^{PS,r}(f, M)=\overline h_\mu^{PS,r}(f^{-1}, M)
		\end{align*}
		For every $n\geq1$, the map  $\Phi_n(x)=f^{n-1}x$ is an isometry between the dynamical systems $( M,f,d_n)$ and $( M,f^{-1},d_n)$.
		Moreover, one can observe that
		$\mathcal E_n\bigl(f^{-1},\Phi_n(x)\bigr)
		=
		\mathcal E_n(f,x).$
		Thus, for every neighborhood $F$ of $\mu$, the map $\Phi_n$
		maps $ M_{n,F}^f$ isometrically onto
		$ M_{n,F}^{f^{-1}}$, and thus
		\begin{align*}
			r_n\bigl(f, M_{n,F}^f,e^{-nr}\bigr)
			=
			r_n\bigl(f^{-1}, M_{n,F}^{f^{-1}},e^{-nr}\bigr).
		\end{align*}
		Then we obtain the desired result.
	\end{proof}

	The following example shows that
	$\min\left\{
	\Phi_1^*(f,\mu),
	h_\mu(f)+\frac{m}{k}\Phi_2^*(f,\mu)
	\right\}$
	can be strictly smaller than $\Phi_1^*(f,\mu)$.

	\begin{example}\label{exam 3.6}
		Let $f_A:\mathbb T^2\to\mathbb T^2$ be the toral automorphism
		induced by
		\begin{align*}
			A=\begin{pmatrix}2&1\\1&1\end{pmatrix}.
		\end{align*}
		Then $f_A$ is a $C^\infty$ diffeomorphism. Take $\lambda=\log\frac{3+\sqrt5}{2}$ and fix  $k\in \mathbb N$ and $k> 2$.
		Consider the following two ergodic measures:
		\begin{itemize}
			\item Let $\mu_{\mathrm H}$ be Haar measure. Then its  Lyapunov exponents are $\lambda$ and $-\lambda$. Since
			$h_{\mu_{\mathrm H}}(f_A)=\lambda$, it shows
			\begin{align*}
				\Phi_1^*(f_A,\mu_{\mathrm H})=\lambda<\lambda+\frac{2\lambda}{k}=h_{\mu_{\mathrm H}}(f_A)
				+\frac{2}{k}\Phi_2^*(f_A,\mu_{\mathrm H}).
			\end{align*}

			\item Let $\mu_0=\delta_0$. Then     Lyapunov exponents are also
			$\lambda$ and $-\lambda$  while $h_{\mu_0}(f_A)=0$. Hence
			\begin{align*}
				h_{\mu_0}(f_A)
				+\frac{2}{k}\Phi_2^*(f_A,\mu_0)
				=\frac{2\lambda}{k}<\lambda=\Phi_1^*(f_A,\mu_0).
			\end{align*}
		\end{itemize}
	\end{example}

	\subsection{Lower bounds}
	We recall Lyapunov charts from \cite{DQ25,LY85a,LY85b}. Let $\mathbb R^m=\mathbb R^s\oplus\mathbb R^c\oplus\mathbb R^u$
	correspond to $E^s(x)\oplus E^c(x)\oplus E^u(x)$, and equip
	it with the norm
	$	|(v^s,v^c,v^u)|
	=\max\{|v^s|,|v^c|,|v^u|\}.$
	For $x\in\Gamma$, let $\lambda^+(x)$ be the smallest
	positive Lyapunov exponent when $E^u(x)\ne\{0\}$,
	and let $\lambda^-(x)$ be the largest negative
	Lyapunov exponent when $E^s(x)\ne\{0\}$.
	We use the convention $\min\emptyset=+\infty$ and omit
	conditions involving a trivial subspace.
	\begin{definition}[Lyapunov charts]\label{def 3.7}
		Let $f:M\to M$ be a $C^{1+\alpha}$ diffeomorphism and
		$\mu\in \mathcal M_f(M)$. For  $x\in \Gamma$ and  $0<\epsilon<\frac1{100}
		\min\{\lambda^+(x),-\lambda^-(x)\}$, there exists a measureable $\ell: \Gamma\to [1,\infty)$ with $\ell(f^\pm x)\leq e^{\epsilon}\ell(x)$.
		\emph{A Lyapunov chart at $x$} is an embedding
		\begin{align*}
			\psi_x:B(0,\ell(x)^{-1}) \longrightarrow M
		\end{align*}
		with the following properties:
		\begin{enumerate}
			\item $\psi_x(0)=x$, and $D_0\psi_x$ takes
			$\mathbb R^s$, $\mathbb R^c$ and $\mathbb R^u$
			onto $E^s(x)$, $E^c(x)$ and $E^u(x)$.

			\item  Define the connecting map
			$\widetilde{f}_x=\psi_{fx}^{-1}\circ f\circ\psi_x$.  Then it satisfyies the following:
			\begin{align*}
				e^{\lambda^+(x)-\epsilon}|v|	&\leq |D\widetilde{f}_x(0) v|,
				&&v\in\mathbb R^u,\\
				e^{-\epsilon}|v|
				&\leq |D\widetilde{f}_x(0) v|\leq e^\epsilon|v|,
				&&v\in\mathbb R^c,\\
				|D\widetilde{f}_x(0) v|&\leq e^{\lambda^-(x)+\epsilon}|v|,
				&&v\in\mathbb R^s.
			\end{align*}

			\item The Lipschitz constant of
			$\widetilde f_x-D\widetilde f_x(0)$
			on the chart domain is at most $\epsilon$.

			\item For any $v,w\in B(0,\ell(x)^{-1})$, there exists a constant $K$   such that
			\begin{align*}
				K^{-1} |v-w|
				\leq d\bigl(\psi_x(v),\psi_x(w)\bigr)
				\leq \ell(x)|v-w|.
			\end{align*}
		\end{enumerate}
	\end{definition}

	\begin{lemma}\label{lem 3.8}
		let $f$ be a $C^{1+\alpha}$ diffeomorphism of a closed Riemannian
		manifold $M$ with $\dim M=m$.	Let  $\mu\in\mathcal E_f(M)$ and fix $r>0$. The for
		each  $	0<\epsilon<\min\left\{\frac{r}{4},\frac{1}{100}
		\min_{\lambda_i\ne0}|\lambda_i|\right\}$,
		there exist a Borel set $A\subseteq M$ with $\mu(A)>\frac{3}{4}$ and
		constants $C_3,C_4>0$ depending on $\epsilon$ such that the following holds:
		for every sufficiently large $n$ and  $x\in A$, there exists a
		finite set $P_n(x)\subseteq M$ such that for each distinct $y,z \in P_n(x)$,
		\begin{itemize}
			\item $d_n(x,y)
			\leq C_3 e^{-\epsilon n}$;
			\item $\#P_n(x)
			\geq C_4 e^{m(r-4\epsilon)n}$;
			\item $d_n(y,z)\geq4e^{-nr}$.
		\end{itemize}
	\end{lemma}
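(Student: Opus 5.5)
We will build $P_n(x)$ as the images under the Lyapunov chart $\psi_x$ of a grid in the chart domain. The grid has to be fine enough to be $4e^{-nr}$-separated in the Bowen metric. It also has to be contained in a chart box small enough that the first $n$ iterates stay inside the charts along the orbit.

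\textbf{Choice of $A$.} By Definition~\ref{def 3.7} the function $\ell$ is measurable and tempered, with $\ell(f^{\pm}x)\le e^{\epsilon}\ell(x)$. Choose $\ell_0$ so that $A=\{x\in\Gamma:\ell(x)\le \ell_0\}$ satisfies $\mu(A)>3/4$. Along the orbit we then have $\ell(f^jx)\le \ell_0e^{j\epsilon}$.

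\textbf{The invariant box.} For $x\in A$ consider the box $R_n=\{v\in\mathbb R^m: |v^u|\le \rho e^{-n(\lambda_{\max}+2\epsilon)},\ |v^c|\le\rho e^{-2n\epsilon},\ |v^s|\le \rho e^{-2n\epsilon}\}$, with $\rho\ll\ell_0^{-1}$. Here $\lambda_{\max}$ bounds $\log\|Df\|$, so expansion along the unstable direction is at most $e^{\lambda_{\max}+\epsilon}$ per step. Using items (2) and (3) of Definition~\ref{def 3.7}, the connecting maps $\widetilde f_{f^{j}x}$ are $\epsilon$-close to their linear parts. Hence for $0\le j<n$ the iterates $\widetilde f^j_x(R_n)$ remain in boxes whose sides are at most $\rho e^{-n\epsilon}e^{-j\epsilon}$. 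These boxes lie inside the chart domain $B(0,\ell(f^jx)^{-1})$ because $\ell(f^jx)^{-1}\ge \ell_0^{-1}e^{-j\epsilon}$.

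Applying item (4) of Definition~\ref{def 3.7} gives $d(f^jx,f^jy)\le \ell(f^jx)\rho e^{-n\epsilon-j\epsilon}\le C_3e^{-\epsilon n}$, which is the first bullet.

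A cleaner alternative uses a uniform Lipschitz bound. Since $\|Df\|\le e^{\lambda_{\max}}$ globally, the ball $B(x,e^{-n(\lambda_{\max}+\epsilon)})$ already has $d_n$-diameter at most $e^{-n\epsilon}$. The charts are then only needed for the separation in the third bullet.

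\textbf{Separation, and the main obstacle.} We must obtain $e^{m(r-4\epsilon)n}$ points in $\psi_x(R_n)$ that are pairwise $4e^{-nr}$-separated in $d_n$. The key mechanism is that distinct grid points $v\neq w$ separate at some time $j<n$. The difference $\widetilde f^j(v)-\widetilde f^j(w)$ is controlled from below:
\begin{itemize}
\item unstable components grow like $e^{j(\lambda^+-\epsilon)}$,
\item center components shrink at most like $e^{-j\epsilon}$,
\item stable components can be read off at time $0$.
\end{itemize}
We therefore choose grid spacings adapted to each direction:
\begin{itemize}
\item in the stable and center directions, spacing $K\cdot4e^{-nr}$, with $K$ from item (4), so that separation already holds at time $0$;
\item in the unstable direction, spacing $4K e^{-nr}e^{-(n-1)(\lambda^+-2\epsilon)}$, so that separation holds at time $n-1$, after using the cone estimate to keep the unstable component dominant.
\end{itemize}

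Comparing the box sides with these spacings, the count is about $(e^{n(r-2\epsilon)})^{s+c}\cdot(e^{n(r-\lambda_{\max}+\lambda^+-4\epsilon)})^{u}$. This falls short of $e^{m(r-4\epsilon)n}$ in the unstable directions. So the unstable side of the box must instead be taken as $\rho e^{-n(\lambda^+ +2\epsilon)}$-scaled along the actual growth rate. This is exactly the step I expect to be delicate: matching the box side in each Oseledets direction to the corresponding exponent, so that the ratio of side to spacing is $e^{n(r-O(\epsilon))}$ in every direction.

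The natural fix is to work in the chart with a box whose $E_i$-side is $\rho e^{-n(\lambda_i^++2\epsilon)}$, where $\lambda_i^+=\max\{\lambda_i,0\}$, and whose $E_i$-spacing is $4Ke^{-nr}e^{-n(\lambda_i^+-2\epsilon)}$. Distinct points then separate at time $n-1$, or at time $0$ for nonpositive exponents, while still staying $C_3e^{-\epsilon n}$-close to $x$. This yields $\#P_n(x)\ge C_4e^{m(r-4\epsilon)n}$, using $\epsilon<r/4$ so that the count grows exponentially. Finally, $P_n(x)$ is the image under $\psi_x$ of this grid.
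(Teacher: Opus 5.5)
There is a genuine gap. Your grid count would be correct in a linear model, but a straight coordinate box at time $0$ in the Lyapunov chart is not approximately forward invariant. So neither the first bullet nor your separation argument is available.

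Item~(3) of Definition~\ref{def 3.7} only bounds the Lipschitz constant of $\widetilde f_z-D\widetilde f_z(0)$ by $\epsilon$ in the \emph{full} norm. A stable/center displacement of size $\rho e^{-2n\epsilon}$ can therefore create, in one step, an unstable displacement of order $\epsilon\rho e^{-2n\epsilon}$. Even for smooth maps it can be of order $(\rho e^{-2n\epsilon})^2$, which reflects the curvature of the stable manifold. This is exponentially larger than your unstable sides $\rho e^{-n(\lambda_i^++2\epsilon)}$, and it is then expanded at rate at least $e^{\lambda^+-\epsilon}$. Such points leave the chart long before time $n$. Your bound on the sides of $\widetilde f^j_x(R_n)$ is therefore false, both for the first box and for the ``natural fix''.

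A concrete model: take the saddle $(s,u)\mapsto(s/2,\,2u+s^2)$.
\begin{enumerate}
\item The stable curve is $u=-\frac47 s^2$, and the deviation $u+\frac47 s^2$ doubles exactly at each step.
\item Hence the points staying in a fixed neighbourhood for $n$ steps lie within $O(2^{-n})$ of this parabola.
\item At $|s|\approx\rho e^{-2n\epsilon}$ these points have $|u|\approx\frac47\rho^2e^{-4n\epsilon}\gg\rho e^{-n(\log 2+2\epsilon)}$, so your box misses them.
\end{enumerate}

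Shrinking the stable/center sides cannot help. To fit $e^{n(r-4\epsilon)}$ points per stable direction that are separated at time $0$, you need sides of at least about $e^{-4n\epsilon}$. The induced curvature term, of order $e^{-8n\epsilon}$ already in this model, still dwarfs $e^{-n\lambda^+}$, because $\lambda^+>100\epsilon$.

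Your global-Lipschitz alternative does give the first bullet. However, a ball of radius $e^{-n(\lambda_{\max}+\epsilon)}$ contains only about $e^{n(r-\lambda_{\max}-\epsilon)}$ points per stable direction that are $4e^{-nr}$-apart, so it cannot give the count.

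There are also two secondary issues. Per-$E_i$ sides and rates require Lyapunov charts finer than Definition~\ref{def 3.7}, which only controls the coarse $s/c/u$ blocks and gives only a lower bound on unstable expansion. And separation at time $n-1$ would need a cone argument applied to the weakest unstable block in which two grid points differ.

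The missing idea is the one the paper uses: do not prescribe the initial point. Instead prescribe \emph{mixed boundary data}, namely the stable/center coordinate $\xi=p_0$ at time $0$ and the unstable coordinate $\zeta=q_{n-1}$ at time $n-1$. Then obtain the orbit segment $(v_0,\dots,v_{n-1})$ as the fixed point of the contraction $T$ in the weighted norm $\|v\|_*=\max_j e^{-2\epsilon j}\max\{|p_j|,|q_j|\}$. Here $p$ is determined by a forward recursion and $q$ by a backward recursion through $B_j^{-1}$.

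This finite-time graph transform automatically tracks the curvature of the invariant manifolds, and it gives three things at once:
\begin{enumerate}
\item The bound $|v_j|\le\frac{c}{4L^2}e^{-4\epsilon n+2\epsilon j}<u(f^jx)$ holds, so the segment is a genuine orbit inside the charts and $d_n(x,y)\le C_3e^{-\epsilon n}$.
\item Separation is free: $\xi$ is read off at time $0$ and $\zeta$ at time $n-1$, so $\max\{|\xi-\xi'|,|\zeta-\zeta'|\}\le K\,d_n(y,y')$.
\item A uniform grid of spacing $4Ke^{-nr}$ in the cube of radius $R_n\asymp e^{-4\epsilon n}$ yields at least $C_4e^{m(r-4\epsilon)n}$ points.
\end{enumerate}
No exponent-dependent box sides, no refinement of the Oseledets splitting and no cone argument are needed.
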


	\begin{proof}
		Choose $0<\delta<\epsilon$ sufficiently small and use the
		Lyapunov charts  with parameter
		$\delta$.
		Let $K\geq1$ be the constant in  Definition~\ref{def 3.7}\textup{(4)}, and let $D\geq1$
		be a Lipschitz constant for $f$.
		\begin{step1}
			Choose a sufficiently small  ball $B(0,u(z))$  in each chart so that the connecting map $\widetilde{f}_z$ is  well-defined and define $A$.
		\end{step1}

		By Definition~\ref{def 3.7}, since $\psi_{fz}(0)=fz$, one can show that
		\begin{align*}
			B\bigl(fz,(2K\ell(fz))^{-1}\bigr)\subset \psi_{fz}\left(B(0,(2\ell(fz))^{-1})\right)
		\end{align*}
		Choose a constant $c\in (0,\frac{1}{2})$ satisfying  $Dc e^\delta<\frac1{2K}$
		and take $u(z)=\frac{c}{\ell(z)^2}.$ If $|v|<u(z)$,
		then
		\begin{align*}
			d\bigl(f\psi_z(v),fz\bigr)
			\leq D\ell(z)|v|
			<\frac{Dc}{\ell(z)}
			<\frac1{2K\ell(fz)}.
		\end{align*}
		Thus $\psi^{-1}_{fz}(f\psi_z(v))$ is well-defined.
		In particular, the connecting map $\widetilde{f}_z$ is defined on
		$B(0,u(z))$. Also, for any $j\geq 0$, one has $u(f^jz)\geq u(z)e^{-2\delta j}$.
		Define
		\begin{align*}
			A_L=\{x\in\Gamma:\ell(x)\leq L\}.
		\end{align*}
		Then there exists  $L<\infty$ such that $\mu(A_L)>3/4$. Take $A:=A_L$.

		\begin{step2}
			Construct the finite set $P_n(x)$.
		\end{step2}

		Fix  $x\in A$.
		Consider the splitting
		$T_xM=E^s(x)\oplus E^c(x)\oplus E^u(x)$. Denote by
		$a=\dim E^s+\dim E^c$ and $b=\dim E^u=m-a$.
		Take $\chi=\frac{\lambda^+}{2}$ if $b>0$, otherwise conditions
		involving $\chi$ are omitted. Then we group the stable and central
		coordinates into $\mathbb R^a$ and the unstable coordinates into
		$\mathbb R^b$.
		Notice that the connecting map on
		$B(0,u(f^jx))$ can be written as
		\begin{align*}
			\widetilde f_{f^jx}(v)
			=
			\begin{pmatrix}
				A_j&0\\
				0&B_j
			\end{pmatrix}v+g_j(v),
			\qquad
			\|A_j\|\leq e^\delta,\qquad
			\|B_j^{-1}\|\leq e^{-\chi}\quad\text{if }b>0,
		\end{align*}
		where  $\operatorname{Lip}(g_j)\leq\delta$. Since $g_j$
		is Lipschitz, we can extend it to the whole space (still denoted by $g_j$) with Lipschitz constant
		at most $C_m\delta$, where $C_m$ depends only on $m$. Choose
		$\delta$ small enough that $q_*<1$, where
		\begin{align*}
			q_*=
			\begin{cases}
				\max\left\{
				e^{\delta-2\epsilon}+C_m\delta e^{-2\epsilon},
				e^{-\chi}(e^{2\epsilon}+C_m\delta)
				\right\}, & b>0,\\[2mm]
				e^{\delta-2\epsilon}+C_m\delta e^{-2\epsilon},
				& b=0.
			\end{cases}
		\end{align*}
		For any $j\geq0$, one can show that
		\begin{align*}
			u(f^jx)\geq\frac{c}{L^2}e^{-2\delta j},
			\quad
			\ell(f^jx)\leq Le^{\delta j}
		\end{align*}

	For $v=(v_0,\ldots,v_{n-1})$, write
	$v_j=(p_j,q_j)\in\mathbb R^a\oplus\mathbb R^b$ and set
	\begin{align*}
		\|v\|_*=
		\max_{0\leq j<n}e^{-2\epsilon j}
		\max\{|p_j|,|q_j|\}.
	\end{align*}
	For boundary data $p_0=\xi$ and $q_{n-1}=\zeta$,
	define $T$ by
	\begin{align*}
		(Tv)^p_0&=\xi,
		&
		(Tv)^p_j&=A_{j-1}p_{j-1}+g_{j-1}^p(v_{j-1})
		&&(1\leq j<n),\\
		(Tv)^q_{n-1}&=\zeta,
		&
		(Tv)^q_i&=B_i^{-1}\bigl(q_{i+1}-g_i^q(v_i)\bigr)
		&&(0\leq i<n-1),
	\end{align*}
		where $(Tv)^p_j$ and $(Tv)^q_i$ denote  the $p$-component and $q$-component of the $j$-th element of  $Tv$.
		Then one can show that
		\begin{align*}
			\|T(v)-T(w)\|_*\leq q_*\|v-w\|_*
		\end{align*}
		By the contraction mapping principle, $T$ has a unique fixed point $v$.
		Set
		$R_n=\frac{(1-q_*)c}{4L^2}e^{-4\epsilon n}.$
		If $\max\{|\xi|,|\zeta|\}\leq R_n$, then $\|v\|_*\leq\frac{R_n}{1-q_*}$ and for $0\leq j<n$,
		\begin{align*}
			|v_j|
			\leq\frac{c}{4L^2}e^{-4\epsilon n+2\epsilon j}
			<\frac{c}{L^2}e^{-2\delta j}
			\leq u(f^jx).
		\end{align*}
		Since  $v_j\in B(0,u(f^jx))$ ,   the point
		$y=\psi_x\bigl(v_0\bigr)$
		satisfies
		$f^jy=\psi_{f^jx}\bigl(v_j\bigr)$
		for any $0\leq j<n$. It implies
		\begin{align*}
			d(f^jx,f^jy)
			\leq\ell(f^jx)|v_j|
			\leq\frac{c}{4L}e^{-4\epsilon n+(2\epsilon+\delta)j}
			\leq\frac{c}{4L}e^{-\epsilon n}.
		\end{align*}
		Hence $d_n(x,y)\leq C_3e^{-\epsilon n}$,
		where $C_3=\frac{c}{4L}$ and $y$ depends on the choice of $\xi$ and $\zeta$.

		Choose a grid $\mathcal G_n$ of spacing $4Ke^{-nr}$ in
		$B_{\mathbb R^a}(0,R_n)\times B_{\mathbb R^b}(0,R_n)$.
		Let $P_n(x)$ consist of the points $y=\psi_x(v_0)$ obtained
		from the boundary $(\xi,\zeta)\in\mathcal G_n$.
		Then for every $y\in P_n(x)$,
		\begin{align*}
			d_n(x,y)\leq C_3e^{-\epsilon n}.
		\end{align*}
		For two distinct $(\xi,\zeta), (\xi',\zeta')\in \mathcal G_n$, one has $\max\{|\xi-\xi'|,|\zeta-\zeta'|\}
		\leq Kd_n(y,y').$
		Thus for distinct points $y,z$ of $P_n(x)$, $d_n(y,z)\geq4e^{-nr}.$

		Since $R_n=\frac{(1-q_*)c}{4L^2}e^{-4\epsilon n}$,
		the grid contains at least
		\begin{align*}
			c_m\left(\frac{R_n}{4Ke^{-nr}}\right)^m
			\geq C_4e^{m(r-4\epsilon)n}
		\end{align*}
		points for all sufficiently large $n$, where
		$C_4>0$ is independent of $x,n$. This proves the lemma.
	\end{proof}

	They we can give the lower bound of $r$-neutralized PS entropy.
	\begin{proof}[Proof of Theorem~\ref{thm 1.4}]
		We first show that for any $\mu\in \mathcal E_f(M),$
		\begin{align}\label{equ 3.8}
			\underline h_\mu^{PS,r}(f,M)\geq h_\mu(f)+r\dim M.
		\end{align}

		Fix an open neighborhood $F$ of $\mu$ and choose a neighborhood
		$F_0$ of $\mu$ with $\overline{F_0}\subseteq F$. Fix $\tau>0$ and choose
		\[
		0<\epsilon<
		\min\left\{\frac r4,\,
		\frac1{100}\min_{\lambda_i\ne0}|\lambda_i|\right\}.
		\]
 By the Brin--Katok formula \cite{BK83}, there are
		$\delta>0$ and a Borel set $A_0$ with
		$\mu(A_0)>3/4$ such that, for all sufficiently large $n$ and $x\in A_0$,
		\begin{align*}
			\mu(B_n(x,4\delta))
			\leq e^{-n(h_\mu(f)-\tau)}
			.
		\end{align*}
		Let $A$ be the set given by Lemma~\ref{lem 3.8}.
		Since $\mu(M_{n,F_0})\to1$, for all sufficiently large
		$n$, the set
		$G_n=A_0\cap A\cap M_{n,F_0}$
		satisfies $\mu(G_n)>1/4$. Let $E_n$ be a maximal $(n,4\delta)$-separated subset of $G_n$. Then $G_n\subset \bigcup_{x\in E_n}B_n(x,4\delta)$. Hence
		$$\mu(G_n)
		\leq \#E_n\,e^{-n(h_\mu(f)-\tau)}$$		and
		$	\#E_n\geq\frac14e^{n(h_\mu(f)-\tau)}.$

		For each $x\in E_n$, consider the set $P_n(x)$ in
		Lemma~\ref{lem 3.8}. For $y\in P_n(x)$, one has $d_n(x,y)\leq C_3e^{-\epsilon n}$. Since $\overline{F_0}\subseteq F$ and
		$d_n(x,y)\leq C_3e^{-\epsilon n}$ for $y\in P_n(x)$,
		the empirical measures $\mathcal E_n(y)$ belong to $F$
		uniformly for all sufficiently large $n$. Hence
		\begin{align*}
			P_n:=\bigcup_{x\in E_n}P_n(x)\subseteq M_{n,F}.
		\end{align*}
		By Lemma~\ref{lem 3.8}, each $P_n(x)$ is
		$(n,4e^{-nr})$-separated. If $x,x'\in E_n$ are distinct,
		$y\in P_n(x)$ and $y'\in P_n(x')$, then
		$	d_n(y,y')
		>4e^{-nr} $
		for  sufficiently large $n$. Thus  $P_n(x)$
		are pairwise disjoint, and $P_n$ is
		$(n,4e^{-nr})$-separated. Consequently,
		\begin{align*}
			\#P_n
			=\sum_{x\in E_n}\#P_n(x)
			\geq C_4e^{m(r-4\epsilon)n}\#E_n
			\geq \frac{C_4}{4}
			e^{n(h_\mu(f)-\tau+m(r-4\epsilon))}.
		\end{align*}
		Since each Bowen ball of radius $e^{-nr}$ contains at most
		one point of $P_n$, we have
		\begin{align*}
			r_n(f,M_{n,F},e^{-nr})
			\geq \#P_n
			\geq \frac{C_4}{4}
			e^{n(h_\mu(f)-\tau+m(r-4\epsilon))}.
		\end{align*}
		Letting $n\to\infty$ gives
		\begin{align*}
			\liminf_{n\to\infty}
			\frac{1}{n}\log r_n(f,M_{n,F},e^{-nr})
			\geq h_\mu(f)-\tau+m(r-4\epsilon).
		\end{align*}
		which implies \eqref{equ 3.8}.

		Let $F$ be an open neighbourhood of $\mu$. For every $\nu\in F\cap\mathcal E_f(M)$, choose a neighborhood $G$ of $\nu$ with $G\subseteq F$. Then
		\begin{align*}
			\liminf_{n\to\infty}\frac{1}{n}\log r_n(f,M_{n,F},e^{-nr})
			\geq\underline h_\nu^{PS,r}(f,M)
			\geq h_\nu(f)+mr.
		\end{align*}
		Taking the supremum over $\nu\in F\cap\mathcal E_f(M)$ and then the infimum over $F\ni\mu$, we obtain
		\begin{align*}
			\underline h_\mu^{PS,r}(f,M)
			\geq mr+\inf_{F\ni\mu}\sup_{\nu\in F\cap\mathcal E_f(M)}h_\nu(f)
			=mr+h_\mu^*(f),
		\end{align*}
		which finishes the proof.
	\end{proof}

	By Pesin's
	entropy formula \cite{Pes77}, we have the following resulT.
	\begin{corollary}
		Let
		$\mu\in\mathcal E_f(M)$ be an SRB measure.  Then we have
		\begin{align*}
			\underline h_\mu^{PS,r}(f,M)
			=\overline h_\mu^{PS,r}(f,M)
			=h_\mu(f)+mr=h_\mu^*(f)+mr.
		\end{align*}
	\end{corollary}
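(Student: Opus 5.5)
The plan is to squeeze the PS entropies between the lower bound of Theorem~\ref{thm 1.4} and the upper bound of Lemma~\ref{lem 3.1}. Pesin's entropy formula will close the gap. Throughout, $f$ is a $C^{1+\alpha}$ diffeomorphism, as in Theorem~\ref{thm 1.4}, so that both Theorem~\ref{thm 1.4} and Pesin's formula \cite{Pes77,LS82} apply. The measure $\mu$ is an ergodic SRB measure.

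\emph{Upper bound.} By Lemma~\ref{lem 3.1}, which needs only $C^1$, we have $\overline h_\mu^{PS,r}(f,M)\leq mr+\mathcal X^+(f,\mu)$. Since $\mu$ is SRB, Pesin's entropy formula gives $\mathcal X^+(f,\mu)=h_\mu(f)$. Hence
\begin{align*}
\overline h_\mu^{PS,r}(f,M)\leq h_\mu(f)+mr.
\end{align*}

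\emph{Lower bound and the chain.} Theorem~\ref{thm 1.4} gives $\underline h_\mu^{PS,r}(f,M)\geq h_\mu^*(f)+mr\geq h_\mu(f)+mr$. The definitions give $\underline h_\mu^{PS,r}\leq\overline h_\mu^{PS,r}$ directly, since a $\liminf$ never exceeds a $\limsup$. Combining these,
\begin{align*}
h_\mu(f)+mr\leq h_\mu^*(f)+mr\leq \underline h_\mu^{PS,r}(f,M)\leq\overline h_\mu^{PS,r}(f,M)\leq h_\mu(f)+mr.
\end{align*}
All inequalities in this chain are therefore equalities. In particular $h_\mu^*(f)=h_\mu(f)$, which also follows from the chain without separate argument.

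The only point needing care is that the upper estimate is stated with $\mathcal X^+$, not with $\Phi_1^*$. This is harmless because Pesin's formula identifies $\mathcal X^+(f,\mu)$ with $h_\mu(f)$, and by Ruelle's inequality this also equals $\Phi_1^*(f,\mu)$. Beyond this, the argument is just bookkeeping of earlier results, with no real obstacle.
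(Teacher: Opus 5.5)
Your proposal is correct and is essentially the paper's argument, which the paper gives in one line by citing Pesin's entropy formula. The lower bound $h_\mu^*(f)+mr\le\underline h_\mu^{PS,r}(f,M)$ comes from Theorem~\ref{thm 1.4}, and the upper bound $\overline h_\mu^{PS,r}(f,M)\le mr+\mathcal X^+(f,\mu)$ from Lemma~\ref{lem 3.1} (equivalently Theorem~\ref{thm 1.3}(1)); Pesin's identity $\mathcal X^+(f,\mu)=h_\mu(f)$ then collapses the whole chain to equalities, and using $\mathcal X^+$ directly instead of $\Phi_1^*$ is a harmless simplification.
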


	\subsection{Eaxct formulas: $C^\infty$ and $C^1$ Anosov diffeomorphism}
	The following  is  a direct result of Proposition~\ref{prop 3.5} and Theorems \ref{thm 1.3}, \ref{thm 1.4}.
	\begin{corollary}\label{cor 3.10}
		Let $f$ be a $C^\infty$ diffeomorphism of a closed Rimannian $m$-manifold. Then 	 for every $r>0$ and $\mu\in\mathcal E_f(M)$,
		\begin{align*}
			\underline h_\mu^{PS,r}(f,M)
			=&\overline h_\mu^{PS,r}(f,M)
			=h_\mu(f)+r\dim M,\\
			\underline h_{top}^r(f,M)
			=&\overline h_{top}^r(f,M)
			=h_{top}(f)+r\dim M.
		\end{align*}
	\end{corollary}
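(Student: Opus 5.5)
Since $C^\infty$ means $C^k$ for every $k\geq2$, we squeeze $\overline h_\mu^{PS,r}$ between the upper bound of Proposition~\ref{prop 3.5} (with $k\to\infty$) and the lower bound of Theorem~\ref{thm 1.4}, then pass to topological entropies using the variational principle of Theorem~\ref{thm 1.2}.

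\textbf{Measure-theoretic formula.} Fix $r>0$ and $\mu\in\mathcal E_f(M)$. For every integer $k\geq2$, Proposition~\ref{prop 3.5} gives
\begin{align*}
\overline h_\mu^{PS,r}(f,M)\leq mr+h_\mu(f)+\frac{m}{k}\mathcal X_{\max}^+(f,\mu).
\end{align*}
Here $\mathcal X_{\max}^+(f,\mu)\leq\log\max\{1,\|Df\|_\infty\}<\infty$ does not depend on $k$, so letting $k\to\infty$ yields $\overline h_\mu^{PS,r}(f,M)\leq h_\mu(f)+mr$. Since $f$ is $C^\infty$, it is in particular $C^{1+\alpha}$, and Theorem~\ref{thm 1.4} gives $\underline h_\mu^{PS,r}(f,M)\geq h_\mu(f)+mr$. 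Because $\underline h\leq\overline h$ trivially, all three quantities coincide.

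\textbf{Upper bound for the topological entropy.} By Theorem~\ref{thm 1.2},
\begin{align*}
\overline h_{top}^r(f,M)=\max_{\mu\in\mathcal M_f(M)}\overline h_\mu^{PS,r}(f,M).
\end{align*}
The maximizing measure need not be ergodic, so we use the bound $\overline h_\mu^{PS,r}\leq mr+h_\mu(f)+\frac mk\mathcal X^+_{\max}$ from Proposition~\ref{prop 3.5}, which holds for all invariant $\mu$. It gives
\begin{align*}
\overline h_{top}^r(f,M)\leq mr+h_{top}(f)+\frac mk\log\max\{1,\|Df\|_\infty\}
\end{align*}
for every $k$, hence $\overline h_{top}^r(f,M)\leq h_{top}(f)+mr$.

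\textbf{Lower bound for the topological entropy.} Since $M_{n,F}\subseteq M$, we have $\underline h_{top}^r(f,M)\geq\underline h_\mu^{PS,r}(f,M)=h_\mu(f)+mr$ for every ergodic $\mu$. Taking the supremum over $\mathcal E_f(M)$ and applying the variational principle \eqref{equ 1.1}, where the supremum may be taken over ergodic measures by the ergodic decomposition, gives $\underline h_{top}^r(f,M)\geq h_{top}(f)+mr$. This proves the formula for the topological entropies.

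\textbf{Where the care is needed.} The only delicate point is the uniformity in $k$ of the error term $\frac mk\mathcal X^+_{\max}$. It requires the bound from Proposition~\ref{prop 3.5} to hold for each fixed $k$ with a constant that is independent of $\mu$, and this is the case because $\mathcal X^+_{\max}$ is bounded by $\log\max\{1,\|Df\|_\infty\}$ uniformly. Note that Theorem~\ref{thm 1.4} needs ergodicity, which is why the topological lower bound is obtained by taking a supremum over ergodic measures rather than working with an arbitrary invariant one.
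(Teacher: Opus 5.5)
Your proposal is correct and follows the paper's argument. The upper bound comes from Proposition~\ref{prop 3.5} with $k\to\infty$, the lower bound from Theorem~\ref{thm 1.4}, and the topological formula from the variational principle of Theorem~\ref{thm 1.2} for the upper bound, together with $M_{n,F}\subseteq M$ and the classical variational principle over ergodic measures for the lower bound; you spell out that last step more explicitly than the paper does. The ``uniformity in $k$'' you flag is not actually needed, since for each fixed invariant $\mu$ the quantity $\mathcal X^+_{\max}(f,\mu)$ is finite, so you can let $k\to\infty$ measure by measure before taking the maximum.
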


	\begin{proof}
		We only need to prove the upper boud. 	Since $\chi_{\max}^+(f,\mu)$ is finite, letting $s\to\infty$ in Proposition~\ref{prop 3.5} gives
		\begin{align*}
			\overline h_\mu^{PS,r}(f,M)
			\leq mr+h_\mu(f)
		\end{align*}
		By the variational principles, one also have
		\begin{align*}
			\overline h_{top}^r(f,M)
			=\max_{\mu\in\mathcal M_f(M)}
			\overline h_\mu^{PS,r}(f,M)
			\leq h_{top}(f)+mr,
		\end{align*}
		which finishes the proof.
	\end{proof}

	Recall that a  diffeomorphism $f$ is Anosov if there is a continuous $Df$-invariant splitting $TM=E^s\oplus E^u$ and constants $C>0$ and $0<\lambda<1$ such that  for every $n\geq0$,
	\begin{align*}
		\|Df^n v\|&\leq C\lambda^n\|v\|
		&& (v\in E^s),\\
		\|Df^{-n}v\|&\leq C\lambda^n\|v\|
		&& (v\in E^u).
	\end{align*}

	\begin{lemma}\label{lem 3.11}
		Let $f:M\to M$ be a $C^1$ Anosov diffeomorphism of a closed
		Riemannian $m$-manifold. Then there exist $\delta>0$ and $C>0$
		such thatfor every $x\in M$, $n\geq1$ and $0<\epsilon<1$,we have
		\begin{align*}
			r_n\bigl(f,B_n(x,\delta),\epsilon\bigr)
			\leq C\epsilon^{-m}.
		\end{align*}
	\end{lemma}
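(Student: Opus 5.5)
The idea is to use local product structure to show that a Bowen ball $B_n(x,\delta)$ of an Anosov diffeomorphism is uniformly comparable, in the $d_n$ metric, to a product of a small stable disk and an $f^{n}$-preimage of a small unstable disk. On such a set, $d_n$-distances are controlled by ordinary distances at the two endpoint times. For $C^1$ Anosov diffeomorphisms we still have continuous stable and unstable foliations with $C^1$ leaves, uniform local product structure, and uniform contraction and expansion along leaves. We do not need absolute continuity or any $C^{1+\alpha}$ distortion. The key observation is that expansion is uniform, so the dynamical balls shrink exponentially in exactly the right way.

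First, I will choose $\delta$ smaller than the local product structure scale, so that for $y,z\in B_n(x,\delta)$ the bracket $[y,z]=W^s_{loc}(y)\cap W^u_{loc}(z)$ is defined. Second, I will establish the basic estimate. By uniform hyperbolicity there is $C_0$ such that, for $y,z\in B_n(x,\delta)$ with $w=[y,z]$,
\begin{align*}
d_n(y,z)\leq C_0\bigl(d^s(y,w)+d^u(f^{n-1}w,f^{n-1}z)\bigr).
\end{align*}
This holds because $d^s(f^jy,f^jw)\leq C\lambda^j d^s(y,w)$ is largest at $j=0$, while $d^u(f^jw,f^jz)\leq C\lambda^{n-1-j}d^u(f^{n-1}w,f^{n-1}z)$ is largest at $j=n-1$. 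All points stay in local leaves because they remain $\delta$-close to the orbit of $x$. Third, I will parametrize $B_n(x,\delta)$. Let $D^s=W^s_{C\delta}(x)$, which is a $C^1$ disk of dimension $s$ with bounded geometry. Let $D^u_n=f^{-(n-1)}\bigl(W^u_{C\delta}(f^{n-1}x)\bigr)$. Then $B_n(x,\delta)\subseteq\{[p,q]:p\in D^s,\ q\in D^u_n\}$, using the same local product structure argument along the orbit.

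Fourth, I will cover the set. Take an $\epsilon/(4C_0)$-net $\{p_i\}$ of $D^s$ in $d^s$; its cardinality is at most $C_1\epsilon^{-s}$, uniformly in $x$, because the local stable disks have uniformly bounded volume and uniformly bounded curvature. Take points $q_k\in D^u_n$ such that $\{f^{n-1}q_k\}$ is an $\epsilon/(4C_0)$-net of $W^u_{C\delta}(f^{n-1}x)$; its cardinality is at most $C_1\epsilon^{-u}$. The basic estimate, together with continuity of the bracket, which is uniformly Hölder or at least uniformly continuous, shows that the points $[p_i,q_k]$ form an $(n,\epsilon)$-spanning set after adjusting constants. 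This gives at most $C\epsilon^{-(s+u)}=C\epsilon^{-m}$ points, independent of $n$ and $x$. If some bracket points fall outside $B_n(x,\delta)$, the standard doubling of radius handles this at the cost of a constant.

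The main obstacle is the third and fourth steps: making sure that the $d_n$-distance between bracket points $[p,q]$ and $[p',q']$ is bounded by $C_0\bigl(d^s(p,p')+d^u(f^{n-1}q,f^{n-1}q')\bigr)$ using only continuity of the foliations, without any Lipschitz holonomy. I would avoid holonomies altogether by working along the single orbit. I will use the fact that $f^{j}[p,q]=[f^jp,f^jq]$ and apply the basic estimate to the pairs $([p,q],[p',q])$ and $([p',q],[p',q'])$ separately, since each pair lies on a common stable or a common unstable leaf, and then combine them with the triangle inequality. Points on a common stable leaf only need the stable distance at time $0$, and points on a common unstable leaf only need the unstable distance at time $n-1$. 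So choosing the nets directly in $D^s$, and in $f^{n-1}D^u_n$ through the sets $\{[p_i,\cdot]\}$, avoids any need for holonomy regularity.
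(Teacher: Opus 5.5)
Your Step 4 and the remedy in your last paragraph do not go through. First, a slip: with your convention $[y,z]=W^s_{loc}(y)\cap W^u_{loc}(z)$, every $[p,q]$ with $p\in W^s_{C\delta}(x)$ and $q\in D^u_n\subseteq W^u_{loc}(x)$ equals $x$. The parametrization therefore needs $[p,q]=W^u_{loc}(p)\cap W^s_{loc}(q)$. With that bracket, consider the chain $[p,q]\to[p_i,q]\to[p_i,q_k]$. The first pair lies on $W^s_{loc}(q)$, so your basic estimate bounds $d_n([p,q],[p_i,q])$ by $d^s([p,q],[p_i,q])$. That is the distance between the images of $p$ and $p_i$ under the unstable holonomy from $W^s_{loc}(x)$ to $W^s_{loc}(q)$, whereas your net only controls $d^s(p,p_i)$. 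Choosing a separate net on each plaque $\{[p_i,\cdot]\}$ handles only the second pair, and reversing the order just moves the holonomy to time $n-1$. So the argument needs holonomies that are uniformly Lipschitz at every scale, which $C^1$ Anosov diffeomorphisms do not have in general (their holonomies are at best H\"older). Uniform or H\"older continuity of the bracket only gives $d_n\lesssim\epsilon^\beta$. Refining the nets to compensate raises the exponent above $m$, which no adjustment of constants can absorb. Since $W^s_{loc}(x)$ and $W^s_{loc}(q)$ are $O(\lambda^n)$ apart, the holonomy displaces points by only $O(\lambda^n)$, so your argument does work when $\epsilon\gtrsim\lambda^n$. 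The gap is the regime $\epsilon\ll\lambda^n$, which arises in the application $\epsilon=\tfrac12e^{-nr}$ when $r$ is large.

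The missing idea is to use coordinates in which the comparison is genuinely Lipschitz; the paper gets them without invariant manifolds. Work in exponential charts along the orbit of $x$ and write $v_k(y)=\exp_{f^kx}^{-1}(f^ky)=p_k(y)+q_k(y)\in E^s_{f^kx}\oplus E^u_{f^kx}$. Take the \emph{linear} endpoint data $\xi_y=p_0(y)$ and $\zeta_y=q_{n-1}(y)$. Use an adapted norm in which $Df$ contracts $E^s$ and $Df^{-1}$ contracts $E^u$ by a factor $\theta<1$. The nonlinear parts of the lifted maps have Lipschitz constant $\kappa<(1-\theta)/2$ on a $\delta$-ball, which uses only $f\in C^1$. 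Propagating stable components forward and unstable components backward then gives
\[
\max_{0\leq k<n}\|v_k(y)-v_k(z)\|_{*}\leq 2\max\bigl\{\|\xi_y-\xi_z\|_{*},\|\zeta_y-\zeta_z\|_{*}\bigr\}
\]
for all $y,z\in B_n(x,\delta)$. Thus $d_n$ on $B_n(x,\delta)$ is Lipschitz-controlled by a point of a bounded $m$-dimensional box in $E^s_x\times E^u_{f^{n-1}x}$. A grid of mesh comparable to $\epsilon$ in that box yields the bound $C\epsilon^{-m}$ directly.
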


	\begin{proof}
		Fix $\theta\in(\lambda,1)$. We define the new forms as follows:
		\begin{align*}
			\|v^s\|_{*,x}
			=\sum_{k=0}^{\infty}
			\theta^{-k}\|Df_x^kv^s\|,
			\quad
			\|v^u\|_{*,x}
			=\sum_{k=0}^{\infty}
			\theta^{-k}\|Df_x^{-k}v^u\|.
		\end{align*}
		For $v=v^s+v^u\in E_x^s\oplus E_x^u$, set $\|v\|_{*,x}
		=\max\{\|v^s\|_{*,x},\|v^u\|_{*,x}\}.$
		Then one can show
		\begin{align*}
			\|Df_xv^s\|_{*,fx}
			\leq\theta\|v^s\|_{*,x},
			\quad
			\|Df_x^{-1}v^u\|_{*,f^{-1}x}
			\leq\theta\|v^u\|_{*,x}.
		\end{align*}
		For $x\in M$ and $k\geq0$, define
		\begin{align*}
			F_{x,k}(v)
			=\exp_{f^{k+1}x}^{-1}
			\bigl(f(\exp_{f^kx}v)\bigr),\quad
			g_{x,k}(v)
			=F_{x,k}(v)-Df_{f^kx}v.
		\end{align*}
		Since $Dg_{x,k}(0)=0$ and $f$ is $C^1$, by the compactness of $M$, there exists $\kappa<\frac{1-\theta}{2}$ such that
		for all $x\in M$ and $k\geq 0$,
		$\operatorname{Lip}_*(g_{x,k})\leq\kappa.$
		Choose $\delta>0$ small enough such that for any $y\in B_n(x,\delta)$ and $0\leq k<n$, these points $f^ky$
		lie in exponential chart of $f^kx$. Then  $v_k(y):=\exp_{f^kx}^{-1}(f^ky)\in T_{f^kx}M$.

		Write $v_k(y)=p_k(y)+q_k(y)$, where
		$p_k(y)\in E^s_{f^kx}$ and $q_k(y)\in E^u_{f^kx}$. Take $\xi_y=p_0(y)$, $\zeta_y=q_{n-1}(y)$.
		For any $y,z\in B_n(x,\delta)$, we define
		$	W:=\max_{0\leq k<n}
		\|v_k(y)-v_k(z)\|_{*,f^kx}.$
		Then for $0\leq k<n$, one has
		\begin{align*}
			\|p_k(y)-p_k(z)\|_{*,f^kx}
			&\leq\theta^k\|\xi_y-\xi_z\|_{*,x}
			+\frac{\kappa}{1-\theta}W,\\
			\|q_k(y)-q_k(z)\|_{*,f^kx}
			&\leq\theta^{n-1-k}
			\|\zeta_y-\zeta_z\|_{*,f^{n-1}x}
			+\frac{\theta\kappa}{1-\theta}W.
		\end{align*}
		By the choice of  $\kappa<(1-\theta)/2$, one can show
		\begin{align*}
			W\leq
			2\max\{\|\xi_y-\xi_z\|_{*,x},
			\|\zeta_y-\zeta_z\|_{*,f^{n-1}x}\}.
		\end{align*}
		Since the exponential maps are uniformly Lipschitz on
		the chosen charts, there exists $C_0>0$
		such that
		\begin{align*}
			d_n(y,z)
			\leq C_0
			\max_{0\leq k<n}
			\|v_k(y)-v_k(z)\|_{*,f^kx}
			\leq 2C_0
			\max\{\|\xi_y-\xi_z\|_{*,x},
			\|\zeta_y-\zeta_z\|_{*,f^{n-1}x}\}.
		\end{align*}

		The endpoint coordinates $(\xi_y,\zeta_y)$ range over a uniformly bounded
		subset of $E_x^s\times E_{f^{n-1}x}^u$ of dimension $m$.
		We partition this subset into at most
		$C'\frac{2C_0}{\epsilon}^m$ cells of diameter less than $\frac{\epsilon}{2C_0}$ in the maximum norm, where
		$C'$ only depends on $M$. We choose a point of $B_n(x,\delta)$ from each  cell, by which
		these points form an
		$(n,\epsilon)$-spanning set. Hence
		\begin{align*}
			r_n(f,B_n(x,\delta),\epsilon)
			\leq C'(2C_0)^m\epsilon^{-m}.
		\end{align*}
		Taking $C=C'(2C_0)^m$ finishes the proof.
	\end{proof}

	We now prove the exact formula for some special diffeomorphisms.

	\begin{proof}[Proof of Theorem~\ref{thm 1.5}]
		The case of $C^{\infty}$ directly follows from Corollary \ref{cor 3.10}. Now assume $f$ is $C^1$ Anosov 	diffeomorphism. Fix $r>0$, $\mu\in\mathcal M_f(M)$, and $\eta>0$.
		Let $\delta_*>0$ be the radius in Lemma~\ref{lem 3.11}.
		By Lemma~\ref{lem 3.2}, we may choose
		$0<\epsilon<\delta_*$ and a neighborhood $F\ni\mu$
		such that
		\begin{align*}
			\limsup_{n\to\infty}\frac1n
			\log s_n(f,M_{n,F},\epsilon)
			\leq h_\mu(f)+\eta.
		\end{align*}
		Take an  $(n,\epsilon)$-separated set
		$E_n\subset M_{n,F}$ with the largest cardinality. Then one has $M_{n,F}\subset\bigcup_{x\in E_n}B_n(x,\delta_*)$.
		Apply Lemma~\ref{lem 3.11} to Bowen balls with radius $\frac{e^{-nr}}{2}$. By choosing points in $M_{n,F}$,
		we can show that
		\begin{align*}
			r_n(f,M_{n,F},e^{-nr})
			\leq C2^me^{mnr}s_n(f,M_{n,F},\epsilon).
		\end{align*}
		Consequently,
		\begin{align*}
			\overline h_\mu^{PS,r}(f,M)
			\leq h_\mu(f)+\eta+mr.
		\end{align*}
		Letting $\eta\to 0$ proves the upper bound.

		We next show that for any $\mu\in \mathcal E_f(M)$,
		\begin{align}\label{equ 3.9}
			\underline h_\mu^{PS,r}(f,M)\geq h_\mu(f)+mr.
		\end{align}
		Fix a neighborhood
		$F$ of $\mu$ and choose $F_0\ni\mu$ with $\overline{F_0}\subset F$.
		There exists $\gamma>0$ such that for every $n$, if $x\in M_{n,F_0}$ and $d_n(x,y)<\gamma$, then $y\in M_{n,F}.$
		As in the proof of Theorem~\ref{thm 1.3}, for every
		$\tau>0$ we may choose  $\delta>0$ and an
		$(n,4\delta)$-separated set $E_n\subset M_{n,F_0}$
		such that $	\#E_n\geq \frac14e^{n(h_\mu(f)-\tau)}$
		for all sufficiently large $n$.

	Use the adapted norm and maps $F_{x,k}$ from
	Lemma~\ref{lem 3.11}. Let $\pi_k^s,\pi_k^u$ denote
	the projections onto $E^s_{f^kx},E^u_{f^kx}$.
	For boundary data
	$\xi\in E_x^s$ and $\zeta\in E^u_{f^{n-1}x}$,
	consider
	\[
	\left\{
	\begin{aligned}
		& p_0=\xi,\qquad q_{n-1}=\zeta,\\
		& p_{k+1}=A_kp_k+\pi^s_{k+1}g_{x,k}(p_k+q_k),\\
		& q_k=(A_k|_{E^u_{f^kx}})^{-1}
		\bigl(q_{k+1}-\pi^u_{k+1}g_{x,k}(p_k+q_k)\bigr),
		\qquad 0\leq k<n-1.
	\end{aligned}
	\right.
	\]
	The estimates in Lemma~\ref{lem 3.11} make these
	equations a contraction on a fixed small ball of orbit
	segments. Thus, for some $R_0>0$ independent of $x,n$,
	every boundary datum of norm less than $R<R_0$
	determines a unique point $y_{\xi,\zeta}$, with $d_n(x,y_{\xi,\zeta})\leq CR$ and
	\begin{align*}
		\max\{\|\xi-\xi'\|_{*,x},
		\|\zeta-\zeta'\|_{*,f^{n-1}x}\}
		\leq Ld_n(y_{\xi,\zeta},y_{\xi',\zeta'}),
	\end{align*}
	where $C,L$ are independent of $x,n$.
		Choose $0<R<C^{-1}\min\{\gamma,\delta\}$.
		For each $x\in E_n$, let $\mathcal G_n(x)$ be a
		$4Le^{-nr}$-separated grid in
		\begin{align*}
			\left\{(\xi,\zeta)\in E_x^s\times E_{f^{n-1}x}^u:
			\max\{\|\xi\|_{*,x},
			\|\zeta\|_{*,f^{n-1}x}\}<R\right\},
		\end{align*}
		and set
		\begin{align*}
			P_n(x)=\{y_{\xi,\zeta}:(\xi,\zeta)\in\mathcal G_n(x)\}.
		\end{align*}
		As in Lemma \ref{lem 3.8}, one has  $P_n(x)\subset B_n(x,CR)\cap M_{n,F}$ and $\#P_n(x)\geq c_Re^{mrn}$, where $c_R$ is independent of $x$ and $n$. Moreover,
		$P_n(x)$ is $(n,4e^{-nr})$-separated.
		If distinct points $x, x'\in E_n$, then for $y\in P_n(x)$ and $ y'\in P_n(x')$,
		\begin{align*}
			d_n(y,y')\geq 4\delta-2CR>2\delta>2e^{-nr}.
		\end{align*}
		Thus $\bigcup_{x\in E_n}P_n(x)$ is
		$(n,2e^{-nr})$-separated for all sufficiently large $n$, and
		\begin{align*}
			r_n(f,M_{n,F},e^{-nr})
			\geq \sum_{x\in E_n}\#P_n(x)
			\geq \frac{c_R}{4}
			e^{n(h_\mu(f)-\tau+mr)}.
		\end{align*}
		Since $F$ and $\tau$ were arbitrary, one can show
		\begin{align*}
			\underline h_\mu^{PS,r}(f,M)\geq h_\mu(f)+mr,
		\end{align*}
		which proves \eqref{equ 3.9}

		The topological formula follows from the measure formula and the variational principle.
	\end{proof}

	Notice that for every ergodic measure of an Anosov diffeomorphism, one can show
	\begin{align*}
		\underline h_\mu^{PS,r}(f,M)=	\overline h_\mu^{PS,r}(f,M)= h_\mu(f)+mr.
	\end{align*}
	Combining with the work of \cite{DQ25}, it is nartural to consider whether the equality  holds for any $C^{1+\alpha}$ diffeomorphism with ergodic hyperbolic measures.
	The following example gives a negative answer, even if we strengthen the system to
	$C^2$.

	\begin{example}\label{exam 3.12}
		Let $M$ be a smooth compact $C^r$  ($r\geq 2$) surface, and  $f:M\to M$ be a
		$C^2$ diffeomorphism
		constructed in \cite[Section~6]{DN05}. Choose a hyperbolic saddle periodic point $p$
		and take
		\begin{align*}
			\mu_p=\frac{1}{\tau(p)}
			\sum_{i=0}^{\tau(p)-1}\delta_{f^ip}
		\end{align*}
		be the uniform measure on its orbit, where $\tau(p)$ is
		the least period of $p$.
		Then for every $r>0$,
		\begin{align*}
			h_{\mu_p}(f)+2r
			=2r
			<\underline h_{\mu_p}^{PS,r}(f,M)
			\leq\overline h_{\mu_p}^{PS,r}(f,M),
		\end{align*}
		where $\mu_p$ is ergodic and hyperbolic.
	\end{example}

	\begin{proof}
		It is easy to show $h_{\mu_p}(f)=0$. Then we are left to show $\underline h_{\mu_p}^{PS,r}(f,M)>2r$ for any $r>0$.

		By equations (56), (57) of \cite{DN05}, there exist ergodic measures
		$\nu_j$ supported on hyperbolic basic sets such that $\nu_j\longrightarrow\mu_p$ and
	$	h_{\nu_j}(f)\longrightarrow\frac{\chi(p)}2>0,$
		where  $\chi(p)$ is the
		positive Lyapunov exponent of $p$.
		Since $\nu_j\to\mu_p$ and each $\nu_j$ is ergodic,
		the definition of $h_{\mu_p}^*(f)$ gives
		\begin{align*}
			h_{\mu_p}^*(f)
			\geq \limsup_{j\to\infty}h_{\nu_j}(f)
			=\frac{\chi(p)}2.
		\end{align*}
		Thus Theorem~\ref{thm 1.4} yields
		\begin{align*}
			\underline h_{\mu_p}^{PS,r}(f,M)
			\geq 2r+h_{\mu_p}^*(f)
			\geq 2r+\frac{\chi(p)}2>2r.
		\end{align*}
	\end{proof}




\section{Estimates and exact formulas on compact invariant sets}
\label{sec 4}
In this section we establish exact formulas on mixing surface hyperbolic
basic sets and compare $r$-neutralized PS entropy with local entropy.

\subsection{Exact formulas on mixing surface hyperbolic basic sets}
Let $f:M\to M$ be a $C^{1+\alpha}$ diffeomorphism of a closed
Riemannian $m$-manifold and let $\Lambda\subset M$ be compact and invariant.
Then for each ergodic hyperbolic measure $\mu$ on $\Lambda$, by Theorem \ref{thm 1.3} and \cite{DQ25}, one has
\begin{equation}\label{equ 4.1}
	\begin{split}
		h_\mu(f,\Lambda)+r\dim_H\mu
		\leq\underline h_\mu^{PS,r}(f,\Lambda)
		\leq\overline h_\mu^{PS,r}(f,\Lambda)
		\leq r \dim M+\Phi^*_1(f,\mu),
	\end{split}
\end{equation}
where  the upper bound follows from
the fact that Proposition \ref{prop 3.5} also holds if restricting to
$\Lambda$.

The bounds in \eqref{equ 4.1} do not determine the contribution
of $\Lambda$ itself.
It is natural to ask whether $\dim_H\Lambda$ can be replaced by $\dim M$.
Different from the classical entropy quantity, where the variational principle
remains valid on compact invariant subsets, the $r$-neutralized PS entropy
may exhibit different behavior when
restricted to compact invariant subset. Moreover, the exact  formula in
Theorem \ref{thm 1.5} may fail,  by Example \ref{exam 2.16}.
Since exact formulas require additional uniform structure, we restrict to
mixing surface hyperbolic basic sets, whose local product structure yields
uniform estimates inside Bowen balls.

Throughout this subsection, we consider the $C^{1+\alpha}$ diffeomorphism $f$ of a closed Riemannian surface  $M$.
We first recall some basic definitions as follows:
\begin{definition}
	A nonempty compact $f$-invariant set $\Lambda$ is
	\emph{hyperbolic} if there is a continuous $Df$-invariant
	splitting $T_\Lambda M=E^s\oplus E^u$ and constants
	$C>0$, $0<\theta<1$ such that, for every $x\in\Lambda$
	and $n\geq0$,
	\begin{align*}
		\|D_xf^n v\|&\leq C\theta^n\|v\|,
		&&v\in E_x^s,\\
		\|D_xf^{-n}v\|&\leq C\theta^n\|v\|,
		&&v\in E_x^u.
	\end{align*}
	It is \emph{locally maximal} if
	$\Lambda=\bigcap_{n\in\mathbb Z}f^n(U)$ for some open
	neighborhood $U$ of $\Lambda$.
	A locally maximal hyperbolic set is a \emph{basic set}
	if $f|_\Lambda$ is topologically transitive.
\end{definition}

Let $\Lambda$ be a mixing hyperbolic
basic set with $\dim E^s=\dim E^u=1$, the stable and
unstable  manifolds at $x$ are defined as
\begin{align*}
	W^s(x)=&\left\{y\in M: \limsup_{n\to \infty}\frac{1}{n}\log d(f^ny,f^nx)<0\right\} ,\\
	W^u(x)=&\left\{y\in M: \limsup_{n\to \infty}\frac{1}{n}\log d(f^{-n}y,f^{-n}x)<0\right\}.
\end{align*}
These are immersed
one-dimensional manifolds tangent at $x$ to $E_x^s$
and $E_x^u$, respectively.
Denote by $\rho^s$ and $\rho^u$ the intrinsic distances
induced by the Riemannian metric. Define the corresponding stable and unstable balls as
\begin{align*}
	B^s(z,\epsilon):=\{y\in W^s(x): \rho^s(z,y)<\epsilon\},\quad B^u(z,\epsilon):=\{y\in W^u(x): \rho^u(z,y)<\epsilon\}
\end{align*}
Choose a sufficiently small uniform radius $t_0>0$ and denote the local stable (unstable) manifold by
\begin{align*}
	W^i_{\mathrm{loc}}(x)
	=\{y\in W^i(x): \rho^i(x,y)<t_0\},
	\quad i=s,u.
\end{align*}
By \cite[Theorem~1.1]{DGGJ19}, for each $x\in \Lambda$, one has
\begin{align}\label{equ 4.2}
	\dim_H\Lambda
	=\dim_B\bigl(\Lambda\cap W^s_{\mathrm{loc}}(x),\rho
	^s\bigr)
	+\dim_B\bigl(\Lambda\cap W^u_{\mathrm{loc}}(x),\rho
	^u\bigr):
	=d_s+d_u,
\end{align}
where $d_s$ and $d_u$ are  independent of $x\in\Lambda$.
For sufficiently small $t>0$, we define
$K^i(x,t)
=\{y\in\Lambda\cap W^i_{\mathrm{loc}}(x):\rho^i(x,y)<t\} $
and the map:
\begin{align*}
	\Phi_x:K^s(x,t)\times K^u(x,t)\longrightarrow\Lambda,
	\quad
	\Phi_x(a,b)=W^u_{\mathrm{loc}}(a)\cap W^s_{\mathrm{loc}}(b).
\end{align*}
By \cite[Remark~1.2]{DGGJ19}, $\Phi_x$ is bi-Lipschitz
with a constant $L\geq1$ independent of $x\in\Lambda$.

Denote by $N(K^i(x,t),\rho^i,\epsilon)$ the minimum
number of balls $B^i(z,\epsilon)$  covering
$K^i(x,t)$, and by $S(K^i(x,t),\rho^i,\epsilon)$
the maximum cardinality of an $\epsilon$-separated
subset of $K^i(x,t)$ with respect to $\rho^i$.
We next give some estiamtes for the covering cardinality of Bowen balls restricted on $\Lambda$.

\begin{lemma}\label{lem 4.2}
	For sufficiently small fixed $\delta>0$ and
	$\eta>0$, there exist two constant $c,C>0$ and $\epsilon_0>0$ such
	that for each $x\in\Lambda$, $n\in \mathbb N$, and
	$\epsilon\in(0,\epsilon_0)$, we have the following estimates:
	\begin{equation*}
		r_n(f,B_n(x,\delta)\cap\Lambda,\epsilon)
		\leq C\epsilon^{-(\dim_H\Lambda+\eta)},\quad
		s_n(f,B_n(x,\delta)\cap\Lambda,\epsilon)
		\geq c\epsilon^{-(\dim_H\Lambda-\eta)}.
	\end{equation*}
\end{lemma}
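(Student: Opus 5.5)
We will reduce both estimates to covering and packing counts on the stable and unstable Cantor slices $K^s$ and $K^u$, using the bi-Lipschitz product chart $\Phi_x$ and the uniform box dimensions $d_s,d_u$ from \eqref{equ 4.2}. The key geometric step describes $B_n(x,\delta)\cap\Lambda$ in these coordinates. For $\delta$ small relative to $t_0$ and the local product constant, a point $y\in B_n(x,\delta)\cap\Lambda$ has its stable coordinate in $K^s(x,t)$. Its unstable coordinate lies on $W^u_{\mathrm{loc}}(x)$ with $\rho^u(f^jx,f^j[\cdot])$ small for $0\le j<n$, so it lies in $f^{-(n-1)}K^u(f^{n-1}x,t)$. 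Conversely, points near $x$ in local product coordinates stay $\delta$-close for the first $n$ iterates.

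Hence we will use the two-sided product decomposition: $B_n(x,\delta)\cap\Lambda$ is comparable to
\[
\Phi_x\bigl(K^s(x,t)\times f^{-(n-1)}K^u(f^{n-1}x,t)\bigr),
\]
and the Bowen metric on it is comparable to $\max\{\rho^s(a,a'),\rho^u(f^{n-1}b,f^{n-1}b')\}$ up to uniform constants. This is exactly the mechanism used in Lemma~\ref{lem 3.11}, via contraction of stable differences forward and unstable differences backward. Equivalently, apply $\Phi$ at the endpoint $f^{n-1}x$ for the unstable part.

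Uniformity in $n$ comes from hyperbolicity. Along the stable leaf, $d(f^ja,f^ja')\le C\theta^j\rho^s(a,a')$, so the initial stable distance controls all times. Along the unstable leaf, the distance at time $n-1$ controls all earlier times. The cross terms are handled by the bi-Lipschitz constant $L$ of $\Phi$ at each $f^jx$, which is uniform because $L$ is independent of the base point.

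With this comparison, the upper bound follows by taking an $\epsilon/C'$-cover of $K^s(x,t)$ and an $\epsilon/C'$-cover of $K^u(f^{n-1}x,t)$ and pulling the latter back by $f^{-(n-1)}$. This gives
\[
r_n\le N(K^s,\epsilon/C')\,N(K^u,\epsilon/C').
\]
We then need $N(K^i(z,t),\rho^i,\epsilon)\le C\epsilon^{-(d_i+\eta/2)}$ uniformly in $z\in\Lambda$ for $\epsilon<\epsilon_0$. Similarly, the lower bound follows from products of separated sets, using $S(K^i(z,t),\epsilon)\ge c\,\epsilon^{-(d_i-\eta/2)}$ uniformly in $z$. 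The product of two separated sets is separated in the max-type comparison metric.

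The main obstacle is uniformity in the base point $z$ of the box-dimension limits, since \eqref{equ 4.2} gives only pointwise existence with a constant value. We plan to obtain it by compactness and holonomy. Stable and unstable holonomies between nearby local leaves in a basic set of a $C^{1+\alpha}$ surface diffeomorphism are Lipschitz, and they map $K^u$ onto $K^u$ locally, so the covering numbers at nearby points are comparable up to a fixed constant and a change of scale. Covering $\Lambda$ by finitely many such neighborhoods and using the dynamics $f^{\pm1}$ to adjust the radius $t$ turns pointwise limits at finitely many centers into bounds uniform in $z$, with $\epsilon_0$ chosen as the minimum over these centers. If the Lipschitz property of holonomy is unavailable at this smoothness, we will fall back on the uniform bi-Lipschitz constant of $\Phi_x$ from \cite[Remark~1.2]{DGGJ19}: it identifies $K^u(z,t)$ with $K^u(z',t)$ via the stable projection inside a product box, which is itself a restriction of $\Phi$ and hence bi-Lipschitz.
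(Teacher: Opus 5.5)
Your proposal is correct and follows essentially the same route as the paper. The paper also parametrizes $B_n(x,\delta)\cap\Lambda$ by $\Theta_{n,x}(a,b)=W^s_{\mathrm{loc}}(f^{-(n-1)}b)\cap W^u_{\mathrm{loc}}(a)$, with stable data at time $0$ and unstable data at time $n-1$, shows that $d_n$ is uniformly comparable to $\max\{\rho^s(a,a'),\rho^u(b,b')\}$, sandwiches the Bowen ball between two such rectangles, obtains uniform slice bounds from a finite local product cover with bi-Lipschitz holonomies, and then takes products of covers and of separated sets.
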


\begin{proof}
	Fix a sufficiently small $t>0$ and $i=s,u$.
	A finite local product cover and the uniformly bi-Lipschitz
	holonomies compare $K^i(x,t)$, up to fixed changes of scale,
	with finitely many reference $i$-slices. In particular, each
	$K^i(x,t)$ contains the image of a smaller reference slice,
	with constants independent of $x$.
	By \eqref{equ 4.2}, for every $\gamma>0$ there exist
	$c_{t,\gamma},C_{t,\gamma},\epsilon_{t,\gamma}>0$ such that for every
	$x\in\Lambda$ and $0<\epsilon<\epsilon_{t,\gamma}$,
	\begin{equation}\label{equ 4.3}
		N(K^i(x,t),\rho^i,\epsilon)\leq C_{t,\gamma}\epsilon^{-(d_i+\gamma)},
		\qquad
		S(K^i(x,t),\rho^i,\epsilon)\geq c_{t,\gamma}\epsilon^{-(d_i-\gamma)}.
	\end{equation}

	For $(a,b)\in K^s(x,t)\times K^u(f^{n-1}x,t)$, define
	\begin{equation*}
		\Theta_{n,x}(a,b)=W^s_{\mathrm{loc}}\bigl(f^{-(n-1)}b\bigr)\cap W^u_{\mathrm{loc}}(a).
	\end{equation*}
	For sufficiently small $t$, uniform hyperbolicity and
	local product structure ensure that $\Theta_{n,x}$
	is well-defined with values in $\Lambda$ for all $n,x$.
	Uniform contraction along stable and unstable plaques,
	together with the bi-Lipschitz coordinate projections
	at times $0$ and $n-1$, gives a constant $C_0\geq1$,
	independent of $x,n$, such that, for
	$z=\Theta_{n,x}(a,b)$ and
	$z'=\Theta_{n,x}(a',b')$,
	\begin{equation}\label{equ 4.4}
		C_0^{-1}\max\{\rho^s(a,a'),\rho^u(b,b')\}
		\leq d_n(z,z')
		\leq C_0\max\{\rho^s(a,a'),\rho^u(b,b')\}.
	\end{equation}

	Consider the rectangle
	\begin{equation*}
		R_{n,x}(t)=\Theta_{n,x}\bigl(K^s(x,t)\times K^u(f^{n-1}x,t)\bigr).
	\end{equation*}
	Fix sufficiently small $\delta>0$.  There exist constants $a_0,A_0>0$
	(independent of $n$) such that
	\begin{equation}
		R_{n,x}(a_0\delta)\subset B_n(x,\delta)\cap\Lambda\subset R_{n,x}(A_0\delta).
	\end{equation}
	Indeed, we can  take $0<a_0<\frac{1}{2C_0}$. For the second relation, for any  $y\in B_n(x,\delta)\cap\Lambda$ and $0\leq k<n$, denote by  $q_k=W^s_{\mathrm{loc}}(f^ky)\cap W^u_{\mathrm{loc}}(f^kx)$. Then for  $\delta$ sufficiently small, one has
	$fq_k=q_{k+1}$. Take $a=W^u_{\mathrm{loc}}(y)\cap W^s_{\mathrm{loc}}(x)$ and $b=q_{n-1}=f^{n-1}q_0.$
	Then by the bi-Lipschitz and local product properties, one can find some $A_0>0$ such that
	\begin{equation*}
		\rho^s(x,a)<A_0\delta,\quad
		\rho^u(f^{n-1}x,b)<A_0\delta.
	\end{equation*}
	Then $y=\Theta_{n,x}(a,b)\in R_{n,x}(A_0\delta)$.

	Take $\gamma=\frac{\eta}{2}$. Cover
	$K^s(x,A_0\delta)\times K^u(f^{n-1}x,A_0\delta)$
	by products of balls of radius $\epsilon/(4C_0)$.
	For each product whose image under $\Theta_{n,x}$
	meets $B_n(x,\delta)\cap\Lambda$, choose a point of
	that intersection. By \eqref{equ 4.4}, these points form
	an $(n,\epsilon)$-spanning set. Hence \eqref{equ 4.3} gives
	\begin{align*}
		r_n(f,B_n(x,\delta)\cap\Lambda,\epsilon)
		&\leq
		N\left(K^s(x,A_0\delta),\rho^s,\frac{\epsilon}{4C_0}\right)
		N\left(K^u(f^{n-1}x,A_0\delta),\rho^u,
		\frac{\epsilon}{4C_0}\right)\\
		&\leq C\epsilon^{-(d_s+d_u+\eta)}.
	\end{align*}
	Similarly, one has
	\begin{align*}
		s_n(f,B_n(x,\delta)\cap\Lambda,\epsilon)
		&\geq
		S(K^s(x,a_0\delta),\rho^s,C_0\epsilon)\cdot
		S(K^u(f^{n-1}x,a_0\delta),\rho^u,C_0\epsilon)\\
		&\geq c\epsilon^{-(d_s+d_u-\eta)}.
	\end{align*}
	Then the desired result follows from \eqref{equ 4.2}.
\end{proof}

\begin{proposition}\label{prop 4.3}
	Let $\Lambda$ be a  mixing  hyperbolic basic set  of the  closed Riemannian surface  $M$. For each $\mu\in\mathcal E_f(\Lambda)$ and $r>0$, we have
	\begin{align*}
		\underline h_\mu^{PS,r}(f,\Lambda)
		=\overline h_\mu^{PS,r}(f,\Lambda)
		=&h_\mu(f,\Lambda)+r\dim_H\Lambda,\\
		\underline h_{top}^r(f,\Lambda)
		=\overline h_{top}^r(f,\Lambda)
		=&h_{top}(f,\Lambda)+r\dim_H\Lambda.
	\end{align*}
\end{proposition}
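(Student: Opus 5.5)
The plan is to mirror the proof of Theorem~\ref{thm 1.5} for Anosov diffeomorphisms. Lemma~\ref{lem 3.11} is replaced by Lemma~\ref{lem 4.2}, and the dimension $m$ of the manifold becomes $\dim_H\Lambda$. Since $\eta>0$ in Lemma~\ref{lem 4.2} is arbitrary, the error $r\eta$ disappears at the end.

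\emph{Upper bound.} Fix $\eta,\tau>0$ and let $\delta>0$ be as in Lemma~\ref{lem 4.2}.
\begin{itemize}
\item By Lemma~\ref{lem 3.2} applied to $f|_\Lambda$, choose $0<\epsilon<\delta$ and a neighbourhood $F\ni\mu$ with $\limsup_n\frac1n\log s_n(f,\Lambda_{n,F},\epsilon)\le h_\mu(f,\Lambda)+\tau$.
\item Let $E_n$ be a maximal $(n,\epsilon)$-separated subset of $\Lambda_{n,F}$, so that $\Lambda_{n,F}\subset\bigcup_{x\in E_n}B_n(x,\delta)\cap\Lambda$.
\item Cover each piece using the first estimate of Lemma~\ref{lem 4.2} at scale $\frac12e^{-nr}$. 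Re-centre the spanning points inside $\Lambda_{n,F}$ at the cost of doubling the radius, which is harmless by Remark~\ref{rem 2.2}. This gives
\[
r_n(f,\Lambda_{n,F},e^{-nr})\le C2^{\dim_H\Lambda+\eta}e^{nr(\dim_H\Lambda+\eta)}\,\#E_n .
\]
\end{itemize}
Hence $\overline h_\mu^{PS,r}(f,\Lambda)\le h_\mu(f,\Lambda)+r\dim_H\Lambda+r\eta+\tau$. This argument works for every invariant $\mu$, not only ergodic ones. Combined with Theorem~\ref{thm 1.2} and the classical variational principle, it gives $\overline h^r_{top}(f,\Lambda)\le h_{top}(f,\Lambda)+r\dim_H\Lambda$.

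\emph{Lower bound for ergodic $\mu$.} Fix a neighbourhood $F$ of $\mu$ and choose $F_0$ with $\overline{F_0}\subset F$. Take $\gamma>0$ such that $x\in\Lambda_{n,F_0}$ and $d_n(x,y)<\gamma$ imply $y\in\Lambda_{n,F}$; this holds by uniform continuity of test functions.
\begin{itemize}
\item By the second part of Lemma~\ref{lem 3.2}, or by the Brin--Katok argument used in Theorem~\ref{thm 1.4}, pick $\delta'$ (shrunk so that $\delta'<\gamma$ and $\delta'\le\delta$) and an $(n,4\delta')$-separated set $E_n\subset\Lambda_{n,F_0}$ with $\#E_n\ge e^{n(h_\mu-\tau)}$ for large $n$.
\item For each $x\in E_n$, take a maximal $(n,e^{-nr})$-separated subset $P_n(x)$ of $B_n(x,\delta')\cap\Lambda$. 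By the second estimate of Lemma~\ref{lem 4.2} (applied with $\delta'$ in place of $\delta$, using the rectangle inclusion $R_{n,x}(a_0\delta')$ from its proof), $\#P_n(x)\ge c\,e^{nr(\dim_H\Lambda-\eta)}$.
\item These sets lie in $\Lambda_{n,F}$, since $\delta'<\gamma$. For distinct $x,x'\in E_n$, points of $P_n(x)$ and $P_n(x')$ are at $d_n$-distance at least $2\delta'>e^{-nr}$.
\end{itemize}
The union is therefore $(n,e^{-nr})$-separated, so
\[
s_n(f,\Lambda_{n,F},e^{-nr})\ge c\,e^{n(h_\mu-\tau+r\dim_H\Lambda-r\eta)}.
\]
This yields $\underline h_\mu^{PS,r}(f,\Lambda)\ge h_\mu(f,\Lambda)+r\dim_H\Lambda$. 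Alternatively, Remark~\ref{rem 2.2} converts this bound into one for $r_n$.

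\emph{Topological lower bound.} Use $\underline h^r_{top}(f,\Lambda)\ge\underline h^{PS,r}_\mu(f,\Lambda)$ for each ergodic $\mu$. The classical variational principle on $\Lambda$ then gives $\underline h^r_{top}\ge h_{top}(f,\Lambda)+r\dim_H\Lambda$, since the supremum of entropy is over ergodic measures. Together with the upper bound, all four quantities coincide.

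\emph{Main obstacle.} The main obstacle is the uniformity required in the lower bound: Lemma~\ref{lem 4.2} must hold with the smaller radius $\delta'$, which is dictated by entropy and by $\gamma$, while its constants $c,\epsilon_0$ stay independent of $x$ and $n$. I would handle this by fixing $\delta'$ first and then invoking Lemma~\ref{lem 4.2} for that fixed radius, which is allowed because the lemma holds for any sufficiently small $\delta$. Since $e^{-nr}<\epsilon_0$ for large $n$, the scale condition is then automatic.
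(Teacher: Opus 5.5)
Your proposal is correct and follows essentially the same route as the paper. It uses Lemma~\ref{lem 3.2} to count $\delta$-Bowen balls at rate $h_\mu(f)$, Lemma~\ref{lem 4.2} to cover or pack each $B_n(x,\delta)\cap\Lambda$ at scale comparable to $e^{-nr}$ with roughly $e^{nr\dim_H\Lambda}$ pieces, and the $4\delta$-separation of the centres to glue the packings together. Your explicit observation that the upper bound holds for every invariant $\mu$, not only ergodic ones, is a useful addition: it is exactly what is needed to obtain the topological upper bound from the maximum over $\mathcal M_f(\Lambda)$ in Theorem~\ref{thm 1.2}, a step the paper merely calls ``direct.''
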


\begin{proof}
	Fix $\mu\in\mathcal E_f(\Lambda)$, $r>0$, and $\eta>0$.
	By Lemma~\ref{lem 3.2}, for sufficiently small $\delta>0$
	there exists a neighborhood $F\ni\mu$ such that
	$\limsup_{n\to\infty}\frac{1}{n}
	\log s_n(f,\Lambda_{n,F},\delta)
	\leq h_\mu(f)+\frac{\eta}{2}.$
	It means that for all sufficiently large $n$, there exists an $(n,\delta)$-separated set $F_n$ of $\Lambda_{n,F}$ with the largest cardinality satiasfying $\#F_n\leq e^{n(h_\mu(f)+\eta)}$. Since $\Lambda_{n,F}\subset\bigcup_{x\in F_n}B_n(x,\delta)$,
	apply Lemma~\ref{lem 4.2} with  $\epsilon=\frac{e^{-nr}}{2}$.
	Then one has
	\begin{align*}
		r_n(f,\Lambda_{n,F},e^{-nr})
		\leq \#F_n\cdot C\cdot 2^{\dim_H\Lambda+\eta}e^{nr(\dim_H\Lambda+\eta)}
	\end{align*}
	Consequently, it shows
	\begin{align*}
		\overline h_\mu^{PS,r}(f,\Lambda)
		\leq h_\mu(f)+\eta+r(\dim_H\Lambda+\eta).
	\end{align*}

	For the lower bound, we fix a neighborhood $F\ni\mu$. Then we can find $\delta>0$ and a
	smaller neighborhood $F_0\subset F$ of $\mu$ such that for any $x\in \Lambda_{n,F_0}$, one has
	\begin{align*}
		B_n(x,\delta)\cap\Lambda\subset\Lambda_{n,F}.
	\end{align*}
	Fix  $\delta$ sufficiently small.
	By Lemma~\ref{lem 3.2}, for all sufficiently large $n$, there is an $(n,4\delta)$-separated set
	$E_n\subset\Lambda_{n,F_0}$ with $\#E_n\geq e^{n(h_\mu(f)-\eta)}.$
	For each $x\in E_n$, we apply
	Lemma~\ref{lem 4.2} with $2e^{-nr}$. Then there exists   an
	$(n,2e^{-nr})$-separated set
	$A_{n,x}\subset B_n(x,\delta)\cap\Lambda$ with
	\begin{align*}
		\#A_{n,x}\geq c\cdot2^{-(\dim_H\Lambda-\eta)}\cdot e^{nr(\dim_H\Lambda-\eta)}.
	\end{align*}
	Each $A_{n,x}$ is disjoint. If $x\ne x'$ in $E_n$, $y\in A_{n,x}$, and
	$y'\in A_{n,x'}$, then
$d_n(y,y')
>2\delta>e^{-nr}$
	for all sufficiently large $n$. Thus
	$\bigcup_{x\in E_n}A_{n,x}$ is
	$(n,e^{-nr})$-separated, and
	\begin{align*}
		s_n(f,\Lambda_{n,F},e^{-nr})\geq \sum_{x\in E_n}\#A_{n,x}\geq c\cdot e^{n(h_\mu(f)-\eta)}\cdot2^{-(\dim_H\Lambda-\eta)}\cdot e^{nr(\dim_H\Lambda-\eta)},
	\end{align*}
	which shows
	\begin{align*}
		\underline h_\mu^{PS,r}(f,\Lambda)
		\geq h_\mu(f)-\eta+r(\dim_H\Lambda-\eta).
	\end{align*}
	Taking the infimum over $F$ and letting $\eta\to 0$, we prove the first formula. The second is a direct result of the first formula.

\end{proof}

\subsection{Further comparison with $r$-neutralized local entropy}

For ergodic hyperbolic measures on $\Lambda$, one has
$\overline{h}_{\mu}^{BK,r}(f,\Lambda)\leq \underline{h}_{\mu}^{PS,r}(f,\Lambda)$.
On the mixing basic sets considered in Proposition~\ref{prop 4.3}, we can give
the exact formula for the $r$-neutralized PS entropy and local entropies by \cite{DQ25}:
\begin{equation*}
	\begin{split}
		\underline h_\mu^{PS,r}(f,\Lambda)-\overline h_\mu^{BK,r}(f,\Lambda)
		=\overline h_\mu^{PS,r}(f,\Lambda)-\overline h_\mu^{BK,r}(f,\Lambda)
		=r(\dim_H\Lambda-\dim_H\mu)\geq 0.
	\end{split}
\end{equation*}
Thus equality holds if and only if
$\dim_H\mu=\dim_H\Lambda$.

The following example shows that equality and strict inequality both occur in the preceding comparison, which are direct consequences of the formulas in Proposition \ref{prop 4.3} and \cite{DQ25}.

\begin{example}\label{ex 4.4}
	Let $f$ be a $C^\infty$ diffeomorphism of a closed Riemannian surface
	with an affine $b$-branch horseshoe $\Lambda$ ($b\geq 2$),
	whose expansion and contraction factors are $e^{\chi^u}$ and
	$e^{-\chi^s}$ with $\chi^u,\chi^s>\log b$, respectively.
	Then $\Lambda$ is a mixing hyperbolic basic set of saddle type, and
	$f|_\Lambda$ is conjugate to the full two-sided shift on $b$ symbols.

	For a probability vector $\boldsymbol w=(w_1,\ldots,w_b)$,
	$w_i>0$, let $\mu_{\boldsymbol w}$ be the corresponding Bernoulli
	measure on $\Lambda$. Then $\mu$ is ergodic and hyperbolic. Set $H(\boldsymbol w)=-\sum_{i=1}^b w_i\log w_i$.
	By \cite{You82} and \cite{MM83}, one has $h_{\mu_{\boldsymbol w}}(f)=H(\boldsymbol w)$ and
	\begin{align*}
		\dim_H\mu_{\boldsymbol w}=\left(\frac1{\chi^u}+\frac1{\chi^s}\right) H(\boldsymbol w),\quad \dim_H\Lambda=\left(\frac1{\chi^u}+\frac1{\chi^s}\right)\log b.
	\end{align*}
	By Proposition~\ref{prop 4.3} and \cite{DQ25},
	for every $r>0$,
	\begin{align*}
		\underline h_{\mu_{\boldsymbol w}}^{PS,r}(f,\Lambda)
		=\overline h_{\mu_{\boldsymbol w}}^{PS,r}(f,\Lambda)
		=&H(\boldsymbol w)+r\left(\frac1{\chi^u}+\frac1{\chi^s}\right)\log b,\\
		\overline h_{\mu_{\boldsymbol w}}^{BK,r}(f,\Lambda)=\underline h_{\mu_{\boldsymbol w}}^{BK,r}(f,\Lambda)
		=&H(\boldsymbol w)+r\left(\frac1{\chi^u}+\frac1{\chi^s}\right)H(\boldsymbol w).
	\end{align*}
	Consequently,
	\[
	\underline h_{\mu_{\boldsymbol w}}^{PS,r}(f,\Lambda)
	-\overline h_{\mu_{\boldsymbol w}}^{BK,r}(f)
	=r\left(\frac1{\chi^u}+\frac1{\chi^s}\right)\bigl(\log b-H(\boldsymbol w)\bigr).
	\]
	In particular, equality holds if and only if $\boldsymbol w$ is uniform
	($w_i=1/b$); otherwise the inequality is strict.
\end{example}

 Without this assumption of ergodicity, by the following example we show that the $r$-neutralized
PS entropy can be strictly smaller than $r$-neutralized local entropy.

\begin{example}\label{exam 4.5}
	Let $f$ be a $C^\infty$ diffeomorphism of a closed Riemannian
	surface $M$ with two identical affine $b$-branch horseshoes
	$\Lambda_1, \Lambda_2$ ($b \ge 2$) in disjoint disks,
	having constant expansion/contraction factors $3b$ and $(3b)^{-1}$.
	Let $\Lambda = \Lambda_1 \sqcup \Lambda_2$ and let $\mu_i$
	be the uniform Bernoulli measure on $\Lambda_i$.
	Then $\mu = \tfrac12(\mu_1+\mu_2)$ is invariant but not ergodic,
	and for every $r>0$,
	\begin{align*}
		\underline h_\mu^{PS,r}(f,\Lambda)
		=\overline h_\mu^{PS,r}(f,\Lambda)
		=0<\log b+\frac{2r\log b}{\log(3b)}=\underline h_\mu^{BK,r}(f)
		=\overline h_\mu^{BK,r}(f).
	\end{align*}
\end{example}

\begin{proof}
	Consider the continuous function	 $\varphi=\mathbf{1}_{\Lambda_2}$ on $\Lambda$. Then
	\begin{align*}
		F=\left\{\nu\in\mathcal M(\Lambda):
		\left|\int\varphi\,\mathrm d\nu-\tfrac12\right|
		<\tfrac14\right\}
	\end{align*}
	is a open neighborhood of $\mu$.
	Since each $\Lambda_i$ is invariant, one can obtain
	$\int\varphi\,\mathrm d\mathcal E_n(x)\in\{0,1\}$ for all $x\in\Lambda, n\geq1$.
	Hence $\Lambda_{n,F}=\emptyset$ for every $n$, which yields
	\begin{align*}
		\underline h_\mu^{PS,r}(f,\Lambda)
		=\overline h_\mu^{PS,r}(f,\Lambda)=0.
	\end{align*}

Since $\operatorname{dist}(\Lambda_1,\Lambda_2)>0$,
for $x\in\Lambda_i$ and all sufficiently large $n$,
$B_n(x,e^{-nr})\cap\Lambda_{3-i}=\varnothing$.
As $\mu_i$ is supported on $\Lambda_i$, it follows that
$\mu(B_n(x,e^{-nr}))
=\frac12\mu_i(B_n(x,e^{-nr})),$
and hence
\begin{align*}
	-\frac1n\log\mu(B_n(x,e^{-nr}))
	=-\frac1n\log\mu_i(B_n(x,e^{-nr}))
	+\frac{\log2}{n}.
\end{align*}
	Since each $\mu_i$ is ergodic and hyperbolic, by \cite{You82}, one has $h_{\mu_i}(f)=\log b$ and $\dim_H\mu_i=\frac{2\log b}{\log(3b)}$.
	Combining with \cite{DQ25}, one can show
	\begin{align*}
		\underline h_\mu^{BK,r}(f)
		=\overline h_\mu^{BK,r}(f)
		=\log b+\frac{2r\log b}{\log(3b)}>0,
	\end{align*}
	which finishes the proof.
\end{proof}

\section*{Acknowledgments}
This work was supported by the National Natural Science Foundation of China (No.~12471184). The
third author was also supported by the Qinglan Project of Jiangsu Province, China.

During the preparation of this manuscript, the authors used GPT-6
to search for relevant references, examine mathematical arguments,
assist in the construction of Example~\ref{exam 3.12}, which shows
that Theorem \ref{thm 1.5}
does not hold for all $C^{1+\alpha}$ diffeomorphisms.
The authors checked and substantially revised all AI-assisted
mathematical arguments and take full responsibility for the final
manuscript.

\section*{Statements and Declarations}
\subsection*{Data Availability}	No datasets were generated or analysed during the current study.
\subsection*{Competing interests}The authors declare no competing interests.

\end{document}